\renewcommand{\theequation}                            %counter according section
       {\mbox{\arabic{section}.\arabic{equation}}}
\theoremstyle{plain}
\newtheorem{definition}{Definition}[section]
\newtheorem{lemma}[definition]{Lemma}
\newtheorem{theorem}[definition]{Theorem}
\newtheorem{proposition}[definition]{Proposition}
\theoremstyle{definition}
\newtheorem{remark}[definition]{Remark}
\renewcommand{\mathbb}{\mathbbm}                     % use mathbbm
\renewcommand{\epsilon}{\varepsilon}                 % AMS symbols
\renewcommand{\phi}{\varphi}
\renewcommand{\theta}{\vartheta}
\renewcommand{\le}{\leqslant}
\renewcommand{\ge}{\geqslant}
\newcommand{\origfoo}{} \let\origfoo=\sqrt           % redefine square root
\renewcommand{\sqrt}[1]{\origfoo{#1}\;}
\renewcommand{\Re}{\text{\rm Re}\,}                  % real part
\newcommand{\abs}[1]{\left\lvert #1 \right\rvert}    % absolut value
\newcommand{\norm}[1]{\left\lVert #1 \right\rVert}   % norm
\DeclareMathOperator{\F}{{\cal F}}                   % sigma-fiel
\DeclareMathOperator{\R}{{\mathbb R}}                % reals
\DeclareMathOperator{\Rp}{{\mathbb R}_+}             % positive reals
\DeclareMathOperator{\C}{{\mathbb C}}                % complexe values
\DeclareMathOperator{\N}{{\mathbb N}}                % integer
\DeclareMathOperator{\I}{I}                          %
\newcommand{\A}{{\mathcal A}}
\DeclareMathOperator{\Borel}{{\mathcal B}}
\newcommand{\scapro}[2]{\langle #1,#2\rangle}       %Skalarprodukt
\newcommand{\scaprob}[2]{\big\langle #1,#2\big\rangle}       %Skalarprodukt
\DeclareMathOperator{\1}{\mathbbm 1}
\renewcommand{\S}{{\mathcal S}}
\renewcommand{\SS}{{\L_S}}
\DeclareMathOperator{\M}{{\mathbb M}} 
\renewcommand{\L}{{\mathcal L}}
\renewcommand{\H}{{\mathcal H}}
\DeclareMathOperator{\Z}{{\mathcal Z}}
\renewcommand{\M}{{\mathcal M}}
\newcommand{\E}{{\mathcal E}}
\newcommand{\G}{{\mathcal G}}
\DeclareMathOperator{\Cc}{{\hat{\mathcal Z}}}
\newcommand{\tr}[1]{{\rm tr}\left[ #1 \right]}
\renewcommand\footnotemark{}
\title{Stochastic integration
with respect to\\ cylindrical L{\'e}vy processes}
\author{
Adam Jakubowski\\
Faculty of Mathematics \\  and Computer Science\\
Nicolaus Copernicus University\\
Torun, Poland
\and
Markus Riedle\footnote{The first author acknowledges the Polish NCN grant  2012/07/B/ST1/03508 
and the second author acknowledges the EPSRC grant EP/I036990/1.}\\
Department of Mathematics\\
King's College\\
London WC2R 2LS\\
United Kingdom\\[.2cm] markus.riedle@kcl.ac.uk}
\begin{document}

\maketitle

\begin{abstract}
A cylindrical L{\'e}vy process does not enjoy a cylindrical version of the
semi-martingale decomposition which results in the need to develop a completely novel approach to stochastic integration. In this work, we introduce a stochastic integral for random integrands with respect to cylindrical L{\'e}vy processes in Hilbert spaces. The space of admissible integrands consists of adapted stochastic processes with values in the space of  Hilbert-Schmidt operators. Neither the integrands nor the integrator is required to satisfy any moment or boundedness condition. The integral process is characterised as an adapted, Hilbert space valued semi-martingale with c{\`a}dl{\`a}g trajectories.
\end{abstract}

\noindent
{\rm \bf AMS 2010 subject classification:}  62H05, 60B11, 60G20, 28C20\\
{\rm \bf Key words and phrases:} cylindrical L\'evy processes, stochastic integration, decoupled
tangent sequence, cylindrical Brownian motion, random measures.

 \thispagestyle{empty}

\section{Introduction}

Cylindrical Brownian motion is the most prominent model of the driving noise for stochastic partial differential equations. The attribute {\em cylindrical} refers here to the fact that cylindrical Brownian motions are not classical stochastic processes attaining values in the underlying space but are generalised objects whose probabilistic distributions are described by a cylindrical, i.e.\ a finitely additive, measure. The reasons for the choice of cylindrical but not classical Brownian motion can be found in the facts that there does not exist a classical Brownian motion with independent components, i.e.\ a standard Brownian motion, in an infinite dimensional Hilbert space, and that cylindrical processes enable a very flexible modelling of random noise in time and space.

The concept of cylindrical Browian motion is naturally extended to {\em cylindrical L{\'e}vy processes} in one of the authors' work \cite{ApplebaumRiedle} with Applebaum. Some specific examples and their constructions of cylindrical L{\'e}vy processes are presented in the work \cite{Riedle-Cauchy} by Riedle.
Linear and semi-linear stochastic partial differential equations perturbed by an additive noise which is  modelled by
various but specific examples of cylindrical L{\'e}vy processes can be found for example in the works
  Brze\'zniak and Zabczyk \cite{BrzZab10},
 Peszat and Zabczyk \cite{PeszatZab12}, and Priola and Zabczyk \cite{PriolaZabczyk}.
However, modelling an arbitrary perturbation of a general stochastic partial differential equations beyond the purely additive case requires a theory of stochastic integration of random integrands with respect to cylindrical L{\'e}vy processes.

Stochastic integration with respect to  cylindrical Brownian processes is developed for example in Daletskij \cite{Daletskij}, followed by the articles Gaveau \cite{Gaveau}, Lepingle and Ouvrard \cite{LepingleOuvrard} and many others.  Surprisingly, stochastic integration with respect to other cylindrical processes than cylindrical Brownian motion is much less considered.  In fact, only  with respect to cylindrical martingales a stochastic integration theory is developed which originates either from an approach by M{\'e}tivier  and Pellaumail in  \cite{MetivierPellcylindrical} and \cite{MetivierPell} or from Mikulevi\v{c}ius and Rozovski\v{\i} in \cite{MikRoz98} and \cite{MikRoz99}.  The construction by  M{\'e}tivier  and Pellaumail is based on  Dol\'eans measures whereas the construction by Mikulevi\v{c}ius and Rozovski\v{\i} uses  a family of reproducing kernel Hilbert spaces. Thus, both constructions heavily rely  on the assumed existence of finite weak second moments. In M{\'e}tivier  and Pellaumail \cite{MetivierPellcylindrical}, the construction is extended to cylindrical local martingales.
For the special case of a cylindrical L{\'e}vy process with finite weak second moments one can follow a classical It{\^o} aproach to define the stochastic integral for random integrands; see
Riedle \cite{Riedle14}.

To our best knowledge, each approach to stochastic integration with respect to classical L{\'e}vy processes or  classical semi-mar\-tin\-gales
is based on the semi-martingale decomposition of the integrator into a local martingale and a process of bounded variation.
%The stochastic integral is then defined by introducing integrals with respect
%to both the local martingale and the process of bounded variation, respectively.
However, this approach fails for cylindrical L\'evy process although they are in the class
of {\em cylindrical semi-martingales}. This is due to the  conceptual mismatch that a cylindrical semi-martingale cannot be decomposed into the sum of a cylindrical local martingale and another cylindrical process, see
Remark \ref{re.semi-martingale}.
Consequently, our work requires a novel approach to stochastic integration  without decomposing the integrator. Even in the finite dimensional case we are not aware of such a kind of approach. In our setting it is even more intricate due to the infinite dimensionality and the generalised process not attaining values in the underlying space.

To explain our  approach in more detail let $(Y(t):\, t\in [0,T])$ be a classical L{\'e}vy process in a Hilbert space $U$ with inner product $\scapro{\cdot}{\cdot}$. A simple integrand $(\Psi(t):\, t\in [0,T])$ is of the form $\Psi=\1_{(a,b]}\otimes\, \Phi$  where $0\le a\le b\le T$ and $\Phi $ is a random variable with values in the space of Hilbert-Schmidt operators from $U$ to another Hilbert space $V$. Each sensible
definition of stochastic integration leads to
\begin{align}\label{eq.intro-int-Y}
\Big\langle \int_0^T \Psi(s)\, dY(s)\Big\rangle \Big\langle v\Big\rangle
=\big\langle \Phi\big( Y(b)-Y(a)\big)\big\rangle \big\langle v\big\rangle
=\big\langle  Y(b)-Y(a)\big\rangle \big\langle \Phi^\ast v\big\rangle
\end{align}
 for every $v\in V$, where $\Phi^\ast$ denotes the adjoint operator.
A cylindrical process, like the cylindrical L{\'e}vy process, is a family $(L(t)\colon \, t\in [0,T])$ of linear and bounded operators $L(t)$ from $U$ to the space  of equivalence classes of real valued random variables.
%\colon U\to L_P^0(\Omega;\R)$ where the range denotes the space of equivalence classes of real %valued random variables.
If we substitute $Y$ by the cylindrical L{\'e}vy process $L$ in \eqref{eq.intro-int-Y} then the inner product on the right hand side in \eqref{eq.intro-int-Y}
corresponds to the application of the linear operator $L(b)-L(a)$  to  the
other argument of the inner product such that we arrive at:
\begin{align}\label{eq.intro-int-L}
\Big\langle \int_0^T \Psi(s)\, dL(s)\Big\rangle \Big\langle v\Big\rangle
=\big( L(b)-L(a)\big)\big(\Phi^\ast v\big).
\end{align}
 However, a technical and a conceptual problem  arise in \eqref{eq.intro-int-L}:
\begin{enumerate}
\item[(1)] the linear operator $L(b)-L(a)$, mapping to the space of equivalence classes of
random variables, is applied to a random argument which results in an ambiguity;
\item[(2)] in order to obtain a $V$-valued stochastic integral such as
$\Phi\big( Y(b)-Y(a)\big)$ in \eqref{eq.intro-int-Y}, there must
exist a $V$-valued random variable $J$ satisfying
 \begin{align*}
\big( L(b)-L(a)\big)\big(\Phi^\ast v\big)
=\big\langle  J\big\rangle \big\langle  v\big\rangle
 \qquad\text{for all }v\in V.
 \end{align*}
\end{enumerate}
We call the approach for solving the Problems (1) and (2) the {\em radonification of the increments} and present it in Section \ref{se.increments}.

However, a much more complicated problem is to extend the class of admissible integrands to a larger space rather than only simple integrands. Denote by  $\H_0(U,V)$  the space of linear combinations of simple integrands of the form as $\Psi$ above. Then,
by means of the radonification of the increments one   can define an integral operator
\begin{align}\label{eq.int-op}
 I\colon \H_0(U,V)\to L_P^0(\Omega;V),
\end{align}
where $L_P^0(\Omega;V)$ denotes the space of equivalence classes of $V$-valued random variables. In the classical setting, the integrator $Y$ is decomposed into a martingale $M$ and a bounded variation process $A$, resulting in integral operators $I_A$ and $I_M$ with $I=I_A+I_M$. It is straightforward to extend the domain of the integral operator $I_A$. The integral operator
$I_M$ turns out to map to the Hilbert space $ L_P^2(\Omega;V)$ of equivalence classes of $V$-valued random variables with finite second moments. Martingale properties and the {\em nice} Hilbert space topology of $ L_P^2(\Omega;V)$ allow to conclude the continuity of $I_M$ and thus to extend  its domain.
However, as mentioned above,  the cylindrical L{\'e}vy process $L$  does not enjoy an analogue
decomposition, and thus we
must work with the integrator operator  \eqref{eq.int-op} {\em in a single entity} to solve:
\begin{enumerate}
\item[(3)] if a sequence $(\Psi_n)_{n\in\N}$ of simple processes in $\H_0(U,V)$ converges to
a stochastic process $\Psi$ in a larger space in some
sense then $I(\Psi_n)$ converges to a random variable in $L_P^0(\Omega;V)$.
\end{enumerate}
Dealing with problem (3) means in particular  that, instead of exploiting the It{\^o} isomorphism
to the Hilbert space $ L_P^2(\Omega;V)$, one must establish convergence in the  much less amenable topology in $L_P^0(\Omega;V)$, i.e.\ convergence in probability. We solve Problem (3) in Section \ref{se.stochastic-integral}. Here, the main
step is establishing  tightness of the set $\{I(\Psi_n):\, n\in\N\}$ of Hilbert space valued
random variables.
We prove tightness of this set by exploiting the result
that tightness of the sum of the {\em decoupled tangent sequence} implies tightness of the original sum. This result originates from one of the authors' work \cite{JakubowskiPhD}, and we will introduce and prove a modified version of this result in Section \ref{se.tightness}. Although this result was originally introduced with a completely different aim it seems to be tailor-made for
the considerations of our current work.

\section{Preliminaries}\label{se.preliminaries}

Let $U$ and $V$ be separable Hilbert spaces with inner products $\scapro{\cdot}{\cdot}$ and corresponding norms $\norm{\cdot}$. The dual spaces are identified by the original Hilbert spaces.  The unit ball is denoted by $B_V:=\{v\in V:\,\norm{v}\le 1\}$.
 Throughout the paper, $\{e_k\}_{k\in\N}$ and $\{f_k\}_{k\in\N}$ denote some orthonormal basis of $U$ and $V$, respectively.

The space of linear and bounded operators is denoted by $\L(U,V)$ and it is equipped with the operator norm $\norm{\cdot}_{U\to V}$. The subspace of {\em Hilbert-Schmidt operators} is denoted by $\L_2(U,V)$ and  is equipped with the norm
\begin{align*}
\norm{\phi}_{\L_2}^2:=\sum_{k=1}^\infty \norm{\phi e_k}^2.
\end{align*}
A simple argument using the standard characterisation of compact sets in Hilbert
spaces show that a set $K\subseteq\L_2(U,V)$ is compact if and only if it is bounded, closed
and obeys
\begin{align}
 % &\text{\rm (i) }\sup_{\phi\in K}\norm{\phi}_{\L_2}<\infty, \label{eq.HS-compact1}\\
  %&\text{\rm (ii) }
  \lim_{N\to\infty} \sup_{\phi\in K}\sum_{k=N+1}^\infty \norm{\phi e_k}^2=0.
   \label{eq.HS-compact2}
\end{align}
The space of $\L_2(U,V)$-valued c\`agl\`ad (continue \`a  gauche, limite \`a  droite) functions is denoted by
\begin{align*}
D_-\big([0,T];\L_2(U,V)\big)
:=\Big\{\psi\colon [0,T]\to \L_2(U,V): \text{left-continuous with right-limits}\Big \}.
\end{align*}
This space becomes complete under the Skorokhod metric
\begin{align}\label{de.Skorokhod-metric}
  d_{J}(\phi,\psi):=\inf_{j\in \Lambda} \Big(\sup_{t\in [0,T]}\norm{\phi(t)-\psi\circ j(t)}_{\L_2} \vee \sup_{t\in [0,T]}\abs{t-j(t)}\Big),
\end{align}
where the infimum is over the set $\Lambda$ of all all strictly increasing, continuous bijections $j\colon [0,T]\to [0,T]$.

The Borel $\sigma$-algebra in $U$ is dented by $\Borel(U)$ and the space of Borel measures on $\Borel(U)$ is denoted by $\M(U)$. The space of Borel probability measures
is denoted by $\M_1(U)$ and it is equipped  with the Prokhorov metric
\begin{align*}
d_P(\mu,\nu):=
\inf\big\{\epsilon>0:\mu(B)\le \nu(B_\epsilon)+\epsilon
\text{ and } \nu(B)\le \mu(B_\epsilon)+\epsilon
\text{ for closed $B\in \Borel(U)$}\big\},
\end{align*}
where $B_\epsilon:=\{u\in U:\, \inf\{\norm{u-b}: b\in B\} < \epsilon\}$.
Convergence in the Prokhorov metric is equivalent to weak convergence of probability measures.

Let $(\Omega,\A,P)$ be a probability space. The space of equivalence classes of measurable functions $X\colon \Omega\to U$ is denoted by $L_P^0(\Omega;U)$. If $\G$ is a sub-$\sigma$-algebra of $\A$ we write
$L_P^0(\Omega,\G;U)$ for the space of equivalence classes of $\G$-measurable functions. By defining the function
\begin{align}\label{eq.def-metric-L0}
p\colon L_P^0(\Omega;U)\to [0,1],\qquad
 p(X)= E\Big[1\wedge \norm{X}^2\Big],
\end{align}
the space $L_P^0(\Omega;U)$ becomes an $F$-space under the  metric
$d(X,Y):=p(X-Y)$.

Let $S$ be a subset of $U$. For every elements $u_1,\dots, u_n\in S$, $n\in\N$ and $B\in\Borel(\R^n)$ define
\begin{align*}
  C(u_1,\dots, u_n;B):=\big\{u\in U:\big( \scapro{u}{u_1},\dots, \scapro{u}{u_n}\big)\in B\big\}.
\end{align*}
These sets are called {\em cylindrical sets with respect to $S$} and they form an algebra $\Z(U,S)$. The generated $\sigma$-algebra is denoted by $\Cc(U,S)$ and it is called the {\em cylindrical $\sigma$-algebra with
respect to $S$}. If $S=U$ we write $\Z(U):=\Z(U,S)$ and
$\Cc(U):=\Cc(U,S)$.

A function $\eta\colon \Z(U)\to [0,\infty]$ is called a {\em cylindrical measure on $\Z(U)$} if for each finite subset $S\subseteq U$ the restriction of $\eta$ to the $\sigma$-algebra $\Cc(U,S)$ is a measure. A cylindrical
measure $\eta$ is called finite if $\eta(U)<\infty$ and a cylindrical probability
measure if $\eta(U)=1$. The characteristic function
of a finite cylindrical measure $\eta$ is defined by
\begin{align*}
\chi_\eta\colon U\to \C,\qquad \chi_\eta (u)=\int_U e^{i\scapro{u}{h}}\, \eta(dh).
\end{align*}
Note that this integral is well defined as the integrand is measurable with respect to $\Cc(U,\{u\})$ for each $u\in\ U$.
The cylindrical measure $\eta$ is called {\em continuous} if $\chi_\eta$ is continuous.

A {\em cylindrical random variable in $U$} is a linear and continuous mapping
\begin{align*}
 Z\colon U\to L_P^0(\Omega;\R).
\end{align*}
If $C=C(u_1,\dots, u_n;B)$ is a cylindrical set for
$u_1,\dots, u_n\in U$ and $B\in \Borel(\R^n)$ we obtain a cylindrical probability measure $\eta$ by the definition
\begin{align*}%\label{eq.relcylmeas}
  \eta(C):=P\big((Zu_1,\dots, Zu_n)\in B\big).
\end{align*}
The mapping $\eta$ is called the {\em cylindrical distribution of $Z$}. The characteristic function of a cylindrical random variable $Z$
is defined by
\begin{align*}
\phi_Z\colon U\to \C, \qquad \phi_Z(u)= E\big[ \exp(iZu)\big],
\end{align*}
and the characteristic function of $Z$ and its  cylindrical distribution $\eta$ coincide.
For a function $\phi\in \L(U,V)$ one can define a cylindrical random variable in $V$ by
\begin{align*}
  Z_\phi\colon V\to L_P^0(\Omega;\R),\qquad
  Z_\phi v= Z(\phi^\ast v).
\end{align*}
In general, $Z_\phi$ is only a cylindrical random variable but if $\phi$ is a Hilbert-Schmidt operator  then there exists a $V$-valued random variable $\phi(Z)\colon\Omega\to V$ satisfying
\begin{align}\label{eq.HSradonifying}
  Z(\phi^\ast v)=\scapro{\phi(Z)}{v}\qquad\text{for all }v\in V;
\end{align}
see \cite[Th.VI.5.2]{Vaketal}.

A family $(Z(t):\,t\ge 0)$ of cylindrical random variables $Z(t)$ in $U$ is called
a {\em cylindrical process in $U$}. In our work \cite{ApplebaumRiedle}, we extended the concept of
cylindrical Brownian motion to cylindrical L{\'e}vy processes:
\begin{definition}
A cylindrical process $(L(t):\, t\ge 0)$ in $U$ is called a {\em cylindrical L{\'e}vy process} if for each $n\in\N$ and any $u_1,\dots, u_n\in U$ we have that
\begin{align*}
\big( (L(t)u_1,\dots, L(t)u_n):\, t\ge 0\big)
\end{align*}
is a L{\'e}vy process in $\R^n$.
\end{definition}
The characteristic function of $L(t)$ is studied in detail in our work \cite{Riedle11}. It turns out that
the characteristic function of $L(t)$ for each $t\ge 0$ is of the form
\begin{align*}%\label{eq.charcylLevy-Levy}
\phi_{L(t)}\colon U \to\C,\qquad  \phi_{L(t)}(u)&=\exp\big(t S(u)\big),
\end{align*}
where $S\colon U \to \C$ is called the {\em cylindrical symbol of $L$} and  is of the form
\begin{align}\label{eq.Levy-symbol}
S(u)=i a(u) -\tfrac{1}{2} \scapro{Qu}{u}
   +\int_U\left(e^{i\scapro{u}{h}}-1- i\scapro{u}{h}   \1_{B_{\R}}(\scapro{u}{h})\right)\nu(dh).
\end{align}
Here, $a\colon U\to\R$ is a continuous mapping with $a(0)=0$,  $Q\colon U \to \R$ is a positive and symmetric operator and $\nu$ is a cylindrical measure on $\Z(U)$ satisfying
\begin{align*}%\label{eq.cylLevymeasure}
  \int_U \big(\scapro{u}{h}^2 \wedge 1\Big) \,\nu(dh)<\infty
  \qquad\text{for all }u\in U.
\end{align*}
Since $L(t)\colon U\to L_P^0(\Omega;\R)$ is continuous, it follows that the characteristic function
$\phi_{L(1)}\colon U\to \C$ is continuous, and thus the symbol $S\colon U\to \C$ is continuous. According to Lemma 3.2 in \cite{Riedle-Cauchy} the cylindrical symbol $S$ maps bounded sets to bounded sets.

\begin{remark}\label{re.semi-martingale}
It follows from the L\'evy-It{\^o} decomposition in $\R$ that for each $u\in U$ and $t\ge 0$ a cylindrical L\'evy process  $L$ with L\'evy symbol \eqref{eq.Levy-symbol} can be decomposed into
\begin{align*}
  L(t)u=a(u)t+ W(t)u + \int_{\abs{\beta}\le 1} \beta \tilde{N}_u(t,d\beta) +
  \int_{\abs{\beta}> 1} \beta N_u(t,d\beta),
\end{align*}
where $W$ is a cylindrical Wiener process in $U$
with covariance operator $Q$ and
\begin{align*}
  N_u(t,B):=\sum_{0\le s\le t} \1_B\big((L(s)u-L(s-))u\big)
  \qquad\text{for all }t\in [0,T],\, B\in\Borel(\R\setminus\{0\}),
\end{align*}
and $\tilde{N}_u$ is the compensated Poisson random measure defined by
$\tilde{N}_u(t,B):=N_u(t,B)-t(\nu\circ \scapro{\cdot}{u}^{-1})(B)$; see \cite[Th.3.9]{ApplebaumRiedle}.
If $L$ does not have finite weak second moments, that is $E\big[\abs{L(1)u}^2\big]=\infty$, then the
(local) martingale part in the semi-martingale decomposition of $(L(t)u:\, t\in [0,T])$ is given by
the sum $W(t)u+R(t)(u)$ where
\begin{align*}
  R(t)(u):=\int_{\abs{\beta}\le 1} \beta \tilde{N}_u(t,d\beta).
\end{align*}
As the truncation function $\beta\mapsto \1_{B}(\beta)$ is not linear the mapping $u\mapsto R(t)(u)$
is not linear neither. Thus, $(L(t)u:\, t\in [0,T])$ enjoys a semi-martingale decomposition for fixed $u\in U$,
but the martingale and bounded variation parts are not linear in $u$ in general.
\end{remark}

We equip the probability space $(\Omega,\A,P)$ with the filtration generated by $L$ and defined by
\begin{align*}
  \F_t:=\sigma(\{L(s)u:\, u\in U, \, s\in [0,t]\})\qquad\text{for all }t\ge 0.
\end{align*}
For a filtration  $\G:=\{\G_{t}\}_{t\in I}$  where $I\subseteq [0,\infty)$ is
an arbitrary index set we define
\begin{align*}
\Upsilon(\G):=\big\{\tau\colon \Omega\to I:\, \text{is stopping time for $\G$}\big\}.
\end{align*}

\section{Tightness by decoupling}\label{se.tightness}

In the later part of this work, the main argument on extending the definition of the stochastic integral from simple integrands to a much larger class of integrands is based on establishing tightness of the set of stochastic  integrals for a sequence of simple integrands. This will be established by the following result which provides a handy criterion for the tightness of a set of sums of random variables in a Hilbert space. The theorem is a modification of a result by Jakubowski in \cite{JakubowskiPhD}, and which is also published in a more general setting in \cite{Jakubowski88}.

\begin{theorem}\label{th.tightness-tangent}
For each $n\in\N$ let $\{X_{n,k}:\,k\in\N\}$ be  a sequence of $V$-valued random variables
adapted to a filtration $F_n:=\{\F_{n,k}:\,k\in\N_0\}$. Define for each $k$, $n\in\N$
a version of the regular conditional distribution
\begin{align*}
  P_{n,k}\colon\Borel(V)\times\Omega\to [0,1], \quad P_{n,k}(B,\omega)=P\big(X_{n,k}\in B\mid \F_{n,k-1}\big)(\omega).
\end{align*}
If there exists a sequence $\{\sigma_n:\, n\in\N\}$ of finite stopping times $\sigma_n\in \Upsilon(F_n)$
such that
$  \{P_{n,1}\ast \dots \ast P_{n,\tau}:\,\tau\in\Upsilon(F_n),\, 1\le \tau\le \sigma_n,\,n\in\N \} \quad\text{is tight,}$
then
\begin{align}\label{eq.random-field-tight}
  \{X_{n,1}+\dots + X_{n,\tau}:\, \tau\in\Upsilon(F_n),\, 1\le \tau\le \sigma_n,\,n\in\N  \}
\end{align}
is tight.
\end{theorem}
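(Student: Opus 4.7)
The strategy is to compare the partial sums $S_{n,\tau}:=X_{n,1}+\cdots+X_{n,\tau}$ with those of the \emph{decoupled tangent sequence}, for which the hypothesis provides direct control. First, on an enlarged probability space $(\widetilde\Omega,\widetilde\A,\widetilde P):=(\Omega\times\Omega_0,\A\otimes\A_0,P\otimes P_0)$ I construct, using the regular conditional distributions $P_{n,k}$, random variables $(\tilde X_{n,k})_{k\in\N}$ together with a filtration $\widetilde F_n=\{\widetilde\F_{n,k}\}$ with $\widetilde\F_{n,k}\supseteq\F_{n,k}\vee\sigma(\tilde X_{n,1},\dots,\tilde X_{n,k})$, such that the conditional law of $\tilde X_{n,k}$ given $\widetilde\F_{n,k-1}$ is $P_{n,k}(\cdot,\omega)$ and the sequence $(\tilde X_{n,k})_k$ is conditionally independent given $\F_{n,\infty}:=\sigma\bigl(\bigcup_k\F_{n,k}\bigr)$. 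By construction, the regular conditional law of $\tilde S_{n,\tau}:=\tilde X_{n,1}+\cdots+\tilde X_{n,\tau}$ given $\F_{n,\tau}$ equals $P_{n,1}\ast\cdots\ast P_{n,\tau}$.

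The tightness hypothesis then transfers immediately to the decoupled sums. Given $\epsilon>0$, pick a compact $K\subseteq V$ with $(P_{n,1}\ast\cdots\ast P_{n,\tau})(K)\ge 1-\epsilon$ for every $\omega\in\Omega$ and all admissible pairs $(n,\tau)$. Integrating over $\omega$ yields
\[\widetilde P\bigl(\tilde S_{n,\tau}\in K\bigr)=E\bigl[(P_{n,1}\ast\cdots\ast P_{n,\tau})(K)\bigr]\ge 1-\epsilon,\]
so $\{\tilde S_{n,\tau}:\,n\in\N,\,\tau\in\Upsilon(F_n),\,1\le\tau\le\sigma_n\}$ is tight in $V$.

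The remaining step, which is the technical heart of the theorem, is to pass from tightness of $\tilde S_{n,\tau}$ to tightness of $S_{n,\tau}$. My plan is to follow the principle-of-conditioning argument of \cite{JakubowskiPhD}: express both characteristic functions $u\mapsto E[e^{i\scapro{u}{S_{n,\tau}}}]$ and $u\mapsto E[e^{i\scapro{u}{\tilde S_{n,\tau}}}]$ via iterated conditioning in terms of the random characteristic functions $\hat P_{n,k}(u,\cdot)$, and then control their discrepancy by exploiting the complex-valued martingale $M_{n,k}(u):=\prod_{j\le k}e^{i\scapro{u}{X_{n,j}}}/\hat P_{n,j}(u)$. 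Combined with the Hilbert-space tightness criterion (norm tightness plus uniform smallness of the tail of Fourier coefficients), this yields an inequality of the form
\[P\bigl(S_{n,\tau}\notin K'\bigr)\le g\Bigl(\sup_{\tau'\le\sigma_n}\widetilde P\bigl(\tilde S_{n,\tau'}\notin K\bigr)\Bigr),\]
for suitable compact sets $K,K'\subseteq V$ and a continuous function $g$ with $g(0)=0$, which transfers tightness as required.

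The principal obstacle is this last comparison: since no moment or symmetry assumption is imposed on $X_{n,k}$, the two characteristic functions are not related by any universal identity, and the decoupling argument must proceed inductively along the filtration. The role of the stopping times $\sigma_n$ is to ensure the inductive argument terminates in finitely many steps, and the uniformity of the hypothesis in $\tau\le\sigma_n$ provides precisely what is required to handle the arbitrary stopping time $\tau$ appearing in the conclusion \eqref{eq.random-field-tight}. The proper handling of $\tau$ itself (as opposed to a deterministic index) is achieved via an optional-sampling argument once the fixed-index comparison has been obtained.
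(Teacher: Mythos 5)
There is a genuine gap. Your proposal correctly identifies the guiding idea (compare the sums with their decoupled tangent counterparts, for which the hypothesis gives control), but the step you yourself call ``the technical heart of the theorem'' --- passing from tightness of the decoupled sums to tightness of $S_{n,\tau}$ --- is only announced as a plan, and the plan as stated would fail. The complex-valued ``martingale'' $M_{n,k}(u)=\prod_{j\le k}e^{i\scapro{u}{X_{n,j}}}/\hat P_{n,j}(u)$ is not well defined in general: without any symmetry or moment assumption the random characteristic functions $\hat P_{n,j}(u)$ may vanish or come arbitrarily close to zero, so the quotient is undefined or unbounded and yields no usable estimate. The paper circumvents exactly this by \emph{multiplying} rather than dividing: it forms the random measure $Q_n(\tau)=P_{n,1}\ast\dots\ast P_{n,\tau}\ast\delta_{-S_n(\tau)}$, whose characteristic function is $e^{-i\scapro{v}{S_n(\tau)}}\chi_{P_n(\tau)}(v)$, and shows (Lemma \ref{le.submartingale}, via a genuine submartingale and optional stopping) that $1-\Re E[\chi_{Q_n(\tau)}(v)]\le E\bigl[\sum_{k=1}^{\tau}(1-\chi_{\widetilde P_{n,k}}(v))\bigr]$ with $\widetilde P_{n,k}$ the symmetrisations. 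At the end one deconvolves: tightness of $Q_n(\tau)$ together with the assumed tightness of $P_n(\tau)$ forces tightness of $\delta_{-S_n(\tau)}$. Your sketch contains no substitute for either of these two moves.

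The second unjustified claim is the inequality $P(S_{n,\tau}\notin K')\le g\bigl(\sup_{\tau'}\widetilde P(\tilde S_{n,\tau'}\notin K)\bigr)$ with continuous $g$, $g(0)=0$. In an infinite-dimensional Hilbert space, control of characteristic functions does not yield tightness without uniform control of the tails of the Fourier coefficients; this is why the paper must bound $1-\Re E[\chi_{Q_n(\tau)}(v)]$ by $\scapro{\phi_{n,\tau}v}{v}+4\epsilon$ for a \emph{compact family of $S$-operators} and invoke Parthasarathy's criterion (Theorem VI.2.3 in \cite{Para}). Obtaining that compact family requires the whole truncation apparatus --- the auxiliary stopping times $\rho_n'$, $\rho_n''$ cutting off when $\overline P_n(k)(B_V^c)$ or $\tr{T_n(k)}$ gets large, and the compactness statements of Lemma \ref{le.randomSop} --- none of which appears in your proposal. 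A smaller point: tightness of the family of random measures only gives the compact set $K$ with $(P_{n,1}\ast\dots\ast P_{n,\tau})(K,\omega)\ge 1-\epsilon$ for $\omega$ in sets $A_{n,\tau}$ of probability at least $1-\epsilon$, not for every $\omega$ as you assert. In short, the decoupling picture is the right intuition (and is how the authors themselves motivate the result), but the actual proof does not construct the tangent sequence at all; it runs entirely through the convolution $Q_n(\tau)$, the submartingale inequality, and the $S$-operator tightness criterion, and these are precisely the ingredients missing from your argument.
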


Theorem  \ref{th.tightness-tangent} provides a method for establishing tightness
of the random field \eqref{eq.random-field-tight}. We call this method {\em tightness by decoupling} for the following reason: for the given sequences $\{X_{n,k}:\,k\in\N\}$ there exist sequences $\{X^\ast_{n,k}:\,k\in\N\}$ of random variables $X^\ast_{n,k}$ on a larger probability space $(\Omega^\ast,\A^\ast, P^\ast)$ and a $\sigma$-algebra $\G\subseteq \A^\ast$ satisfying:
\begin{enumerate}
\item[(i)] for every $n\in\N$ the sequence
$\{X_{n,k}^\ast:\,k\in\N\}$ is conditionally independent given $\G$;
\item[(ii)] $P^\ast(X_{n,k}^\ast\in B|\G)=
P(X_{n,k}\in B|\F_{n,{k-1}})$  for all $B\in\Borel(V)$ and $k$, $n\in\N$.
\end{enumerate}
The sequence $\{X^\ast_{n,k}:\,k\in\N\}$ is called the
{\em decoupled tangent sequence}; see Chapter 6 in \cite{dePenaGine} or \cite{Kwapien}. By defining
\begin{align*}
 S_n(\sigma_n):=
\begin{cases}
0 &\text{if } \sigma_n=0, \\\displaystyle
 \sum_{k=1}^{\sigma_n} X_{n,k}  &\text{else,}
\end{cases}
\qquad\qquad
 S_n^\ast(\sigma_n):=
\begin{cases}
0 &\text{if } \sigma_n=0, \\\displaystyle
 \sum_{k=1}^{\sigma_n} X_{n,k}^\ast  &\text{else,}
\end{cases}
 \end{align*}
one can conclude from Theorem \ref{th.tightness-tangent} that
if $\{S_n^\ast(\sigma_n):\,n\in\N\}$ is tight then
$\{S_n(\sigma_n):\,n\in\N\}$ is also tight.

Applying Theorem \ref{th.tightness-tangent} in the one-dimensional case yields another result, the {\em principle of conditioning}, which we also use in this work. The original proof can be found in
\cite{Beskaetal} and \cite{Jakubowski80}, and further extensions to Hilbert spaces  in \cite{Jakubowski86}.
\begin{theorem}\label{th.conditioning}
For each $n\in\N$ let $\{X_{n,k}:\,k\in\N\}$ be  a sequence of real valued random variables
adapted to a filtration $F_n:=\{\F_{n,k}:\,k\in\N_0\}$ and $\sigma_n\colon\Omega\to \N$ be a stopping time for $\{\F_{n,k}:\,k\in\N\}$.
 Define for each $k$, $n\in\N$:
\begin{align*}
  \Delta_{n,k}\colon\R\times\Omega\to \C, \qquad \Delta_{n,k}(\beta,\omega)=E\big[e^{i\beta X_{n,k}}\mid \F_{n,k-1}\big](\omega).
\end{align*}
If for each $\beta\in\R$ there exists a deterministic constant $c(\beta)\neq 0$ such that
\begin{align*}
   \lim_{n\to\infty} \prod_{k=1}^{\sigma_n}\Delta_{n,k}(\beta,\cdot)= c(\beta)
   \quad\text{in probability},
\end{align*}
then it follows that
\begin{align*}
  \lim_{n\to\infty} E\left[e^{i\beta (X_{n,1}+\dots + X_{n,\sigma_n})}\right]= c(\beta).
\end{align*}
\end{theorem}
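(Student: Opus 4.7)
The plan is to prove the statement by a direct martingale-ratio argument that does not invoke Theorem \ref{th.tightness-tangent}. Fix $\beta\in\R$ and, suppressing the index $n$, abbreviate $\Delta_k:=\Delta_{n,k}(\beta,\cdot)$, $M(j):=\prod_{k=1}^{j}e^{i\beta X_{n,k}}$, $A(j):=\prod_{k=1}^{j}\Delta_k$, and $c:=c(\beta)\neq 0$. Since $|A(j+1)|=|A(j)|\,|\Delta_{j+1}|$ and $|\Delta_k|\le 1$, the modulus $|A|$ is non-increasing in $j$, a fact I will repeatedly use.

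The key observation is that, wherever $A(j)\neq 0$, the complex-valued ratio $H(j):=M(j)/A(j)$ is an $F_n$-martingale: conditioning on $\F_{n,j-1}$ and using $E[e^{i\beta X_{n,j}}\mid\F_{n,j-1}]=\Delta_j$ yields $E[H(j)\mid\F_{n,j-1}]=H(j-1)$. If $|A|$ were bounded away from $0$, optional sampling would force $E[H(\sigma_n)]=1$ and hence
\[
E[M(\sigma_n)]-c\;=\;E\big[(A(\sigma_n)-c)\,H(\sigma_n)\big];
\]
then $A(\sigma_n)\to c$ in probability together with bounded convergence would deliver the conclusion at once.

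The main obstacle is that the $\Delta_k$ can be very small along the way, so $H(j)$ need not be well defined or bounded for $j<\sigma_n$. To circumvent this, fix $0<\epsilon<|c|$ and stop the martingale at
\[
\tau_n^\epsilon:=\inf\{j\ge 0:\,|A(j+1)|\le\epsilon\}\wedge\sigma_n,
\]
so that $|A(j)|>\epsilon$, and hence $|H(j)|\le 1/\epsilon$, on $\{j\le\tau_n^\epsilon\}$. Optional sampling for the bounded stopped martingale then yields $E[H(\tau_n^\epsilon)]=1$, and consequently
\[
E[M(\tau_n^\epsilon)]\;=\;c+E\big[(A(\tau_n^\epsilon)-c)\,H(\tau_n^\epsilon)\big].
\]

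To finish, set $B_n:=\{|A(\sigma_n)|>\epsilon\}$. By the monotonicity of $|A|$, $\tau_n^\epsilon=\sigma_n$ on $B_n$, and the hypothesis gives $P(B_n)\to 1$. Splitting the expectation above over $B_n$ and $B_n^c$: on $B_n$ I use $|H(\tau_n^\epsilon)|\le 1/\epsilon$ together with $A(\sigma_n)-c\to 0$ in probability, so dominated convergence gives a vanishing contribution; on $B_n^c$ the crude bound $|(A(\tau_n^\epsilon)-c)H(\tau_n^\epsilon)|\le(1+|c|)/\epsilon$ combined with $P(B_n^c)\to 0$ does the same. Hence $E[M(\tau_n^\epsilon)]\to c$. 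Finally, $|M|\equiv 1$ gives $|E[M(\sigma_n)]-E[M(\tau_n^\epsilon)]|\le 2\,P(\tau_n^\epsilon<\sigma_n)\le 2\,P(B_n^c)\to 0$, and so $E[e^{i\beta(X_{n,1}+\dots+X_{n,\sigma_n})}]=E[M(\sigma_n)]\to c(\beta)$, as required.
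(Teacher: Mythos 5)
Your proof is correct. Note first that the paper does not actually prove Theorem \ref{th.conditioning}: it explicitly defers to the literature (\cite{Beskaetal}, \cite{Jakubowski80}, \cite{Jakubowski86}), so there is no in-paper argument to compare against; your quotient-martingale argument is essentially the classical proof of the principle of conditioning contained in those references, and it supplies what the paper omits. The two ingredients that make it work are exactly right and both rest on the same observation, which deserves to be stated explicitly: since $\Delta_{n,j}$ is $\F_{n,j-1}$-measurable, the product $A(j)$ is $\F_{n,j-1}$-measurable (predictable). This is what makes $H(j)=M(j)/A(j)$ a martingale wherever $A(j)\neq 0$, and it is also what makes $\tau_n^\epsilon=\inf\{j\ge 0:\abs{A(j+1)}\le\epsilon\}\wedge\sigma_n$ a stopping time with $\abs{A(j)}>\epsilon$ on $\{j\le\tau_n^\epsilon\}$ (in particular $\abs{A(\tau_n^\epsilon)}>\epsilon$, which you use on $B_n^c$). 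Optional sampling should be read as: the stopped process $H(j\wedge\tau_n^\epsilon)$ is a martingale bounded by $1/\epsilon$, hence $E[H(j\wedge\tau_n^\epsilon)]=1$ for every $j$, and letting $j\to\infty$ with dominated convergence (using $\tau_n^\epsilon\le\sigma_n<\infty$) gives $E[H(\tau_n^\epsilon)]=1$; this is implicit in your write-up but correct. The remaining steps — $P(B_n)\to 1$ because $A(\sigma_n)\to c$ in probability with $\epsilon<\abs{c}$, the monotonicity of $\abs{A}$ forcing $\tau_n^\epsilon=\sigma_n$ on $B_n$, the uniform bound $(1+\abs{c})/\epsilon$ on both pieces, and the final estimate $\abs{E[M(\sigma_n)]-E[M(\tau_n^\epsilon)]}\le 2P(\tau_n^\epsilon<\sigma_n)\le 2P(B_n^c)$ — all check out. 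One last remark: the statement only asserts convergence of $E[e^{i\beta(X_{n,1}+\dots+X_{n,\sigma_n})}]$ for each fixed $\beta$, so, consistent with your plan, no tightness input (and in particular no appeal to Theorem \ref{th.tightness-tangent}) is needed; your direct argument is the more elementary route for this one-dimensional statement.
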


For the proof of Theorem \ref{th.conditioning} we refer to the literature. For the following proof of Theorem \ref{th.tightness-tangent} we introduce a few notations and objects. A {\em random measure} is a measurable mapping $M:\Omega\to \M(V)$, where measurability is with respect to the Borel $\sigma$-algebra induced by the Prokhorov metric in $\M(V)$. The mapping $M$ is called a {\em random probability measure}, if it maps to the space $\M_1(V)$ of Borel probability measures on $\Borel(V)$.
A random measure $M$ is called {\em integrable}, if $E[M(V)]<\infty$. In this case, $E[M](B):=E[M(B)]$ for all $B\in \Borel(V)$ defines an element in $\M(V)$.
By starting with simple functions and passing to the limit one shows for bounded, measurable functions $\phi\colon V\to \R$ that
\begin{align}\label{eq.fubini-randomE-general}
E\left[\int_V \phi(u)\, M(du)\right]
=\int_V \phi(u) \,E[M](du).
\end{align}
The characteristic function of a random probability measure $M$ is defined by
\begin{align*}
  \chi_M\colon V\times\Omega\to\C, \qquad \chi_M(v)=\int_V e^{i\scapro{v}{h}}\,M(dh).
\end{align*}
For an integrable random meaure $M$ it follows from \eqref{eq.fubini-randomE-general} that
\begin{align}\label{eq.fubini-randomE}
E[\chi_M(v)]=\chi_{E[M]}(v)\qquad \text{for all $v\in V$.}
\end{align}
A set $\{M_i:\, i\in I\}$ of random measures  is called {\em tight}, if for each $\epsilon_1>0$ we have:
\begin{align}
\begin{split}\label{eq.def-tight-rm}
& \text{there exist for all $i\in I$ a set $A_i\in\A $ with $P(A_i)>1-\epsilon_1$ obeying:}\\
& \qquad \big\{M_i(\cdot,\omega):\, \omega\in A_i,\, i\in I\big\} \quad\text{is relatively compact in $\M(V)$.}
\end{split}
%\end{align}
\intertext{It follows from Prokhorov's theorem that Condition \eqref{eq.def-tight-rm} is equivalent to}
%\begin{align}
& \text{\rm (i) }  \sup_{i\in I}\sup_{w\in A_i} M_i(V,\omega)<\infty,\label{eq.rm-tight-bounded} \\
& \text{\rm (ii) }\label{eq.rm-tight-tight}
 \text{for each $\epsilon_2>0$ there exists a compact set $K\subseteq V$ (depending on $\epsilon_1$) such that}\notag\\
& \hspace*{3cm}   \sup_{i\in I}\sup_{w\in A_i}M_i(K^c,\omega)\le \epsilon_2.
\end{align}
If $\{M_i:\, i\in I\}$ is a family of random probability measures then it is tight if and only if $\{E[M_i]:\, i\in I\}$ is tight in $\M(V)$.

A non-negative, symmetric operator $\phi\colon V\to V$ is called an {\em $S$-operator} if
\begin{align*}
  \tr{\phi}:=\sum_{k=1}^\infty \scapro{\phi f_k}{f_k}<\infty .
\end{align*}
The space of all $S$-operators is denoted by $\SS(V)$. The space $\SS(V)$ is a subspace of the Banach space of trace class operators, and it is equipped with the Borel $\sigma$-algebra of the latter.  A set $\{\phi_i:\, i\in I\}\subseteq \SS(V)$ is relatively compact if and only if
\begin{align}
&\text{\rm (i) }  \sup_{i\in I} \tr{\phi_i}<\infty;\label{eq.compactS-finite-trace}\\
& \text{\rm (ii) } \lim_{N\to\infty} \sup_{i\in I}\sum_{k=N}^\infty \scapro{\phi_if_k}{f_k}=0.
\hspace*{3cm} \label{eq.compactS-sup-limit}
\end{align}
A set $\{T_i:\, i\in I\}$ of random variables $T_i\colon \Omega\to\SS(V)$ is {\em tight} if and only if for each $\epsilon>0$ there exist for all $i\in I$  a set $A_i\in \A$ with  $P(A_i)>1-\epsilon$ such that
\begin{align}\label{eq.S-tight}
 \{T_i(\omega):\, \omega\in A_i,\,i\in I\}\quad\text{is relatively compact in $\SS(V)$}.
\end{align}

\begin{proof} of Theorem \ref{th.tightness-tangent}.
For each $\tau\in \Upsilon(F_n)$ with $\tau\ge 1$ and $n\in\N$ define
the random probability  measure
\begin{align*}
 P_n(\tau)\colon\Borel(V)\times\Omega\to [0,1],\qquad P_n(\tau):=P_{n,1}\ast\dots \ast P_{n,\tau},
\end{align*}
and, by denoting $S_n(\tau):=X_{n,1}+\dots +X_{n,\tau}$, the random probability measure
\begin{align*}
  Q_n(\tau)\colon\Borel(V)\times\Omega\to [0,1],\qquad
   Q_n(\tau)=P_n(\tau)\ast \delta_{-S_n(\tau)},
\end{align*}
where $\delta_Y$ denotes the random Dirac measure in $Y\in L^0(\Omega;V)$.
In a first and main step we show that for each $\epsilon>0$ there exists
a compact set
$
  \{\phi_{n,\tau}:\, \tau\in\Upsilon(F_n),\, 1\le \tau\le \sigma_n,\,n\in\N  \}
$
of deterministic $S$-operators $\phi_{n,\tau}\in \SS(V)$, such that for every $n\in\N$ and each $\tau\in \Upsilon(F_n)$
with $ 1\le \tau\le \sigma_n$  we have:
\begin{align}\label{eq.phi_Q}
  1-\Re E\left[\chi_{Q_n(\tau)}(v) \right]\le \scapro{\phi_{n,\tau}v}{v}+4\epsilon
  \qquad\text{for all }v\in V.
\end{align}
For this purpose, fix $\epsilon>0$ and  define the symmetrisation  $\widetilde{P}_{n,k}:=P_{n,k}\ast P_{n,k}^{-}$ where $P_{n,k}^-(B,\omega):=P_{n,k}(-B,\omega)$ for all $B\in \Borel(V)$ and $\omega\in \Omega$.
Define for each $\tau\in \Upsilon(F_n)$ with $\tau\ge 1$ and $n\in\N$ the random measure
\begin{align*}
 \overline{P}_n(\tau)\colon\Borel(V)\times\Omega\to \Rp,\qquad \overline{P}_n(\tau)=\widetilde{P}_{n,1}+\dots + \widetilde{P}_{n,\tau},
\end{align*}
and the random $S$-operator
\begin{align*}
T_n(\tau)\colon V\times\Omega\to V,\qquad  \scapro{T_n(\tau)v}{v}=\int_{B_V}\scapro{v}{h}^2\,\overline{P}_n(\tau)(dh).
\end{align*}
As $\{P_n(\sigma_n):\, n\in\N\}$ is tight it follows that the set $\{\widetilde{P}_{n,1}\ast\dots \ast \widetilde{P}_{n,\sigma_n}:\, n\in\N\}$ is also tight. Part a) of Lemma \ref{le.randomSop} implies that $\{T_n(\tau):\, \tau\in \Upsilon(F_n),\,1\le \tau\le \sigma_n,\, n\in\N\}$ is a tight set of random $S$-operators and that $\{\overline{P}_n(\tau)(\cdot\cap B_V^c):\,\tau\in \Upsilon(F_n),\,1\le \tau\le \sigma_n,\, n\in\N\}$ is a tight set of random measures. It follows from \eqref{eq.rm-tight-bounded} and from \eqref{eq.S-tight}, respectively, that there are constants $c_1$, $c_2>0$ such that for each $n\in\N$ we have
 $P\big(\overline{P}_n(\tau)(B_V^c)>c_1\big)\le \epsilon$ and $P(\tr{T_n(\tau)}>c_2)\le \epsilon$ for all
 $\tau\in \Upsilon(F_n)$ with $1\le \tau\le \sigma_n$. Define for each $n\in\N$ the stopping times
\begin{align*}
  \rho_n^\prime:=\inf\left\{k\in\N:\,  \overline{P}_n(k)(B_V^c)>c_1\right\},\qquad
  \rho_n^{\prime\prime}:=\inf\left\{k\in\N:\, \tr{T_n(k)}>c_2\right\}.
\end{align*}
The stopping time $\rho_n:=\rho_n^\prime\wedge \rho_n^{\prime\prime}$ satisfies for each $n\in \N$ that $P(\rho_n < \tau)\le 2\epsilon$ for all $\tau\in \Upsilon(F_n)$ with $1\le \tau\le \sigma_n$. Lemma \ref{le.submartingale} implies for each  $v\in V$ that
\begin{align*}
1-\Re E[\chi_{Q_n(\tau)}(v)]
&\le E\big[\big(1-\Re \chi_{Q_n(\tau)}(v)\big)\1_{\{\tau\le \rho_n\}}  \big]
+ P(\rho_n<\tau)\\
&\le 1-\Re E\big[\chi_{Q_n(\rho_n\wedge \tau)}(v)\big]+ 2\epsilon\\
&\le E\left[\sum_{k=1}^{\rho_n\wedge\tau} \big(1- \chi_{\widetilde{P}_{n,k}}(v)\big)\right]+2\epsilon.
\end{align*}
The assumed tightness of $\{P_n(\tau):\, \tau\in \Upsilon(F_n),\, 1\le \tau\le \sigma_n,\, n\in\N\}$
yields tightness of $\{\widetilde{P}_{n,1}\ast\dots \ast \widetilde{P}_{n,\rho_n\wedge \sigma_n}:\, n\in\N\}$.
Moreover, since
\begin{align*}
\overline{P}_n(\rho_n\wedge \sigma_n)(B_V^c)
\le \overline{P}_{n}(\rho_n-1)(B_V^c)+ \widetilde{P}_{n,\rho_n}(B_V^c)
\le c_1+1\qquad\text{for all }n\in\N,
\end{align*}
part c) of Lemma \ref{le.randomSop} guarantees tightness of $\{E[\overline{P}_n(\rho_n\wedge \tau)](\cdot\cap B_V^c):\,\tau\in \Upsilon(F_n),\, 1\le \tau\le \sigma_n, \, n\in\N\}$. Thus, there exists a constant $d > 1$ such that
\begin{align}\label{eq.Eoverlinep}
  E\big[\overline{P}_n(\rho_n\wedge \tau)(\{v\in V:\,\norm{v}>d\})\big]\le \epsilon.
\end{align}
Define for each $\tau\in \Upsilon(F_n)$ and $n\in\N$ the random operator
\begin{align*}
R_n(\tau)\colon V\times\Omega\to V,\qquad  \scapro{R_n(\tau)v}{v}=\int_{1<\norm{h}\le d}\scapro{v}{h}^2\,\overline{P}_n(\tau)(dh).
\end{align*}
Since $1-\cos \beta \le 2\beta^2 $ for all $\beta\in \R$ we obtain by \eqref{eq.Eoverlinep} for all $v\in V$ that
\begin{align*}
&E\left[\sum_{k=1}^{\rho_n\wedge \tau} \big(1- \chi_{\widetilde{P}_{n,k}}(v)\big)\right]\\
&\qquad\qquad = E\left[\int_V \big(1-\cos(\scapro{v}{h})\big)\,\overline{P}_n(\rho_n\wedge \tau)(dh) \right]\\
&\qquad\qquad\le 2E\left[\int_{\norm{h}\le d}\scapro{v}{h}^2\,\overline{P}_n(\rho_n\wedge \tau)(dh) \right]
  + 2 E\left[\overline{P}_n(\rho_n\wedge \tau)(\{h\in V:\, \norm{h}>d\})\right]\\
&\qquad\qquad\le 2E\big[ \scapro{T_n(\rho_n\wedge \tau)v}{v}\big] +
2E\big[ \scapro{R_n(\rho_n\wedge \tau)v}{v}\big] + 2\epsilon\\
&\qquad\qquad= 2\big\langle E\big[T_n(\rho_n\wedge \tau)\big]v\big\rangle\big\langle v\big\rangle +
2\big\langle E\big[ R_n(\rho_n\wedge \tau)\big]v\big\rangle\big\langle v\big\rangle+2\epsilon.
\end{align*}
In the last line we applied part (b)  of Lemma \ref{le.randomSop}, which can be done as the definition of the stopping times $\rho_n^\prime$ and $\rho_n^{\prime\prime}$ guarantees for all $n\in\N$ that
\begin{align*}
\tr{R_n(\rho_n^{\prime}\wedge\sigma_n)}
&\le \int_{1<\norm{h}\le d}\norm{h}^2\,\overline{P}_n(\rho_n^{\prime} -1)(dh)
+ \int_{1<\norm{h}\le d}\norm{h}^2\,\widetilde{P}_{n,\rho_n^{\prime}}(dh)\\
&\le d^2 (c_1+1),
\end{align*}
and analogously
\begin{align*}
\tr{T_n(\rho_n^{\prime\prime}\wedge\sigma_n)}
\le \tr{T_n(\rho_n^{\prime\prime}-1)}
   +  \int_{\norm{h}\le 1}\norm{h}^2\,\widetilde{P}_{n,\rho_n^{\prime\prime}}(dh)
\le c_2+1 .
\end{align*}
Moreover, Lemma \ref{le.randomSop} guarantees that the sets $\{E[T_n(\rho_n\wedge \tau)]:\, \tau\in \Upsilon(F_n),\, 1\le \tau\le \sigma_n, \, n\in\N\}$ and  $\{E[R_n(\rho_n\wedge\tau)]:\,\tau\in \Upsilon(F_n),\, 1\le \tau\le \sigma_n, \, n\in\N \}$ are relatively compact in $\L_S(V)$, which completes the proof of \eqref{eq.phi_Q}.

It follows from \eqref{eq.phi_Q} by Theorem VI.2.3 in \cite{Para} that the set $\{E[Q_n(\tau)]:\, \tau\in \Upsilon(F_n),\, 1\le \tau\le \sigma_n, \, n\in\N\}$ and thus also the set $\{Q_n(\tau):\, \tau\in \Upsilon(F_n),\, 1\le \tau\le \sigma_n, \, n\in\N\}$ are tight. Since each random probability measure $Q_n(\tau)$ is the convolution of $P_n(\tau)$ and the random Dirac measure $\delta_{-S_n(\tau)}$, and since the set $\{P_n(\tau):\, \tau\in \Upsilon(F_n),\, 1\le \tau\le \sigma_n, \, n\in\N\}$ is assumed to be tight, it follows that the set
$\{\delta_{-S_n(\tau)}:\,\tau\in \Upsilon(F_n),\, 1\le \tau\le \sigma_n, \, n\in\N\}$  is tight, which completes the proof.
\end{proof}

The following two results are used in the proof of Theorem \ref{th.tightness-tangent}.
\begin{lemma}\label{le.submartingale}
In the setting of Theorem \ref{th.tightness-tangent} define for some $n\in\N$ and for a stopping time $\tau\in \Upsilon(F_n)$ the random probability measure
\begin{align*}
  Q_n(\tau)\colon\Borel(V)\times\Omega\to [0,1],\qquad
   Q_n(\tau)=P_{n,1}\ast\dots \ast P_{n,\tau}\ast \delta_{-S_n(\tau)},
\end{align*}
where $S_n(\tau):=X_{n,1}+\dots +X_{n,\tau}$. Then it follows that
\begin{align*}
  1-\Re{E[\chi_{Q_n(\tau)}(v)]}\le E\left[\sum_{k=1}^{\tau} \big(1- \chi_{\widetilde{P}_{n,k}}(v)\big)\right]
  \qquad\text{for every }v\in V.
\end{align*}
\end{lemma}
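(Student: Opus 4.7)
The plan is to recognise the conditional characteristic function $\chi_{Q_n(k)}(v)$ as a process that is ``almost a martingale'' after conditioning, and to telescope the deviation $1-\Re\chi_{Q_n(\tau)}(v)$ into increments whose conditional expectations involve exactly $1-\chi_{\widetilde P_{n,k}}(v)$. First, I would observe that since $P_{n,k}(\cdot,\omega)$ is a regular version of $P(X_{n,k}\in\cdot\mid\F_{n,k-1})(\omega)$, the random characteristic function $\chi_{P_{n,k}}(v)$ is $\F_{n,k-1}$-measurable and equals $E[e^{i\scapro{v}{X_{n,k}}}\mid\F_{n,k-1}]$. Consequently
\begin{align*}
N_k(v):=\chi_{Q_n(k)}(v)=\prod_{j=1}^{k}\chi_{P_{n,j}}(v)\cdot e^{-i\scapro{v}{S_n(k)}},\qquad N_0(v)=1,
\end{align*}
is $\F_{n,k}$-measurable with $|N_k(v)|\le 1$, and a one-line conditional expectation using $\chi_{P_{n,k}}(v)\overline{\chi_{P_{n,k}}(v)}=|\chi_{P_{n,k}}(v)|^2=\chi_{\widetilde P_{n,k}}(v)$ gives $E[N_k(v)\mid\F_{n,k-1}]=N_{k-1}(v)\,\chi_{\widetilde P_{n,k}}(v)$.

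Second, I would truncate by a bounded stopping time $\tau_N:=\tau\wedge N$ to avoid integrability issues, and use the telescoping identity
\begin{align*}
1-\Re N_{\tau_N}(v)=\sum_{k=1}^{N}\1_{\{\tau\ge k\}}\bigl(\Re N_{k-1}(v)-\Re N_k(v)\bigr).
\end{align*}
Because $\{\tau\ge k\}\in\F_{n,k-1}$, the tower property together with the conditional expectation computed above yields
\begin{align*}
E\bigl[\1_{\{\tau\ge k\}}(\Re N_{k-1}(v)-\Re N_k(v))\bigr]
=E\bigl[\1_{\{\tau\ge k\}}\bigl(1-\chi_{\widetilde P_{n,k}}(v)\bigr)\Re N_{k-1}(v)\bigr].
\end{align*}

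Third, I would note the two key nonnegativity facts: $\chi_{\widetilde P_{n,k}}(v)=|\chi_{P_{n,k}}(v)|^2\in[0,1]$, so $1-\chi_{\widetilde P_{n,k}}(v)\ge 0$; and $\Re N_{k-1}(v)\le |N_{k-1}(v)|\le 1$. Combining these, the displayed summand is dominated by $E[\1_{\{\tau\ge k\}}(1-\chi_{\widetilde P_{n,k}}(v))]$, and summing over $k=1,\dots,N$ (then rewriting the right-hand side as $E[\sum_{k=1}^{\tau_N}(1-\chi_{\widetilde P_{n,k}}(v))]$) gives the bound with $\tau_N$ in place of $\tau$.

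Finally, I would pass to the limit $N\to\infty$: on the left, bounded convergence applied to $1-\Re N_{\tau_N}(v)\to 1-\Re N_\tau(v)$ (which is valid on $\{\tau<\infty\}$ and with a uniform bound of $2$); on the right, monotone convergence applied to the nonnegative partial sums. I do not expect a genuine obstacle; the only delicate point is the truncation step, which is needed because $\tau$ is only assumed to be a (possibly unbounded) finite stopping time, so the series on the right need not be integrable a priori.
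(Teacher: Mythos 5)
Your proof is correct and follows essentially the same route as the paper: the identity $E[\chi_{Q_n(k)}(v)\mid\F_{n,k-1}]=\chi_{Q_n(k-1)}(v)\,\chi_{\widetilde{P}_{n,k}}(v)$ is exactly the paper's key computation, and your hand-rolled telescoping with the tower property and the bound $\Re N_{k-1}(v)\le 1$ is just Doob's optional stopping theorem (which the paper invokes after packaging the increments into a submartingale) written out explicitly. The truncation at $\tau\wedge N$ followed by monotone convergence on the right and dominated convergence on the left matches the paper's final step exactly.
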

\begin{proof} Fix $v\in V$, $n\in\N$,  and define for each $j\in\N$ the $\F_{n,j}$-measurable  random variable
\begin{align*}
  X_n(j):=1-\chi_{\widetilde{P}_{n,j}}(v)-\Re \chi_{Q_n(j-1)}(v)+\Re \chi_{Q_n(j)}(v),
\end{align*}
where we set $Q_n(0)=\delta_{0}$.
We claim that $Y_n(k):=X_n(1)+\dots + X_n(k)$ defines a submartingale
$(Y_n(k):\,k\in\N)$ with respect to $F_n$. For each $k\in\N$ we obtain
\begin{align}\label{eq.condExpsub}
 \! E[X_n(k)\mid\F_{n,k-1}]&=\!
  1-\chi_{\widetilde{P}_{n,k}}(v)-\Re \chi_{Q_n(k-1)}(v)\!+\Re E\left[ \chi_{Q_n(k)}(v)\mid \F_{n,k-1}\right].
\end{align}
Since the random measure $Q_n(j)$ is defined as a convolution its characteristic function obeys
\begin{align*}
  \chi_{Q_n(j)}(v)=
\chi_{\delta_{-S_n(j)}}(v)   \chi_{P_n(j)}(v)
  =e^{-i\scapro{v}{S_n(j)}} \chi_{P_n(j)}(v)
  \qquad\text{for all }j\in\N,
\end{align*}
where $P_n(k):=P_{n,1}\ast\dots \ast P_{n,k}$.
Consequently, we arrive at
\begin{align*}
E\left[ \chi_{Q_n(k)}(v)\mid \F_{n,k-1}\right]  &=  \chi_{P_n(k)}(v) e^{-i\scapro{v}{S_n(k-1)}} E\left[ e^{-i\scapro{v}{X_{n,k}}}\mid \F_{n,k-1}\right]\\
 &= \chi_{Q_n(k-1)}(v) \chi_{P_{n,k}}(v) \chi_{P^{-}_{n,k}}(v)\\
 &=\chi_{Q_n(k-1)}(v) \chi_{\widetilde{P}_{n,k}}(v).
\end{align*}
Applying this equality to \eqref{eq.condExpsub} we obtain
\begin{align*}
  E[X_n(k)\mid\F_{n,k-1}]
  =\big( 1-\Re \chi_{Q_n(k-1)}(v)\big)\big( 1- \chi_{\widetilde{P}_{n,k}}(v)\big).
\end{align*}
Since the last line is non-negative it follows that $(Y_n(k):\,k\in\N)$ is a submartingale.

We conclude from \eqref{eq.fubini-randomE} that
\begin{align*}
E[\chi_{Q_n(1)}(v)]
=E[\chi_{P_{n,1}}(v)] E[e^{-i\scapro{ X_{n,1} }{v}}]
=\chi_{X_{n,1}}(v) \chi_{-X_{n,1}}(v)= E[\chi_{\widetilde{P}_{n,1}}(v)],
\end{align*}
which yields $E[Y_n(1)]=0$.
Doob's optional stopping theorem shows for  $\tau\in \Upsilon(F_n)$  that
$E\big[ Y_n(\tau\wedge N)\big] \ge E\big [Y_n(1)\big]=0$  for all $N\in\N$,
%\begin{align*}
%  E\big[ Y_n(\tau\wedge N)\big]
%  \ge E\big [Y_n(1)\big]=0
%  \qquad\text{for all }N>0,
%\end{align*}
which, by the very definition of $Y_n(k)$,  results in
\begin{align*}
 E\left[ \sum_{k=1}^{\tau\wedge N} \left(1-\chi_{\widetilde{P}_{n,k}}(v) \right)\right] &\ge
 E\left[ -\sum_{k=1}^{\tau\wedge N} \left(\Re \chi_{Q_n(j)}(v)-\Re \chi_{Q_n(j-1)}(v)\right)\right] \\
&= E\bigg[ 1- \Re \chi_{Q_n(\tau\wedge N)}(v)\bigg] .
\end{align*}
Applying the result on monotone convergence to the left hand and Lebesgue's theorem of dominated convergence to the right hand completes the proof.
\end{proof}

\begin{lemma}\label{le.randomSop}
For each $n\in\N$ let $\{M_{n,k}:\,k\in\N\}$ be a sequence of symmetric random probability measures adapted to a filtration $F_n:=\{\F_{n,k}:\,k\in\N_0\}$.
For $\tau\in \Upsilon(F_n)$ with $\tau \ge 1$ denote $\overline{M}_n(\tau):=M_{n,1}+\dots + M_{n,\tau}$ and define for some $c>0$  the random operators
\begin{align*}
T_n(\tau)\colon V\times\Omega\to V,\qquad  \scapro{T_n(\tau)v}{v}=\int_{\norm{h}\le c}\scapro{v}{h}^2\,\overline{M}_n(\tau)(dh),
\end{align*}
and the random measures
\begin{align*}
  N_n(\tau)\colon\Borel(V)\times\Omega\to \Rp,\qquad
   N_n(\tau)(B)=\overline{M}_n(\tau)\big(B\cap \{ \norm{v}>c\}\big).
\end{align*}
If there exists a sequence $\{\sigma_n:\, n\in\N\}$ of finite stopping times $\sigma_n\in \Upsilon(F_n)$ with $\sigma_n\ge 1$
such that $\{M_{n,1}\ast\dots\ast M_{n,\sigma_n}:\,n\in\N\}$ is tight, then
we have:
\begin{enumerate}
\item[{\rm (a)}]
the set $\{T_n(\tau):\, \tau\in \Upsilon(F_n),\,1\le\tau\le \sigma_n,\, n\in\N\}$ of random $S$-operators and the
 set $\{N_n(\tau):\,\tau\in \Upsilon(F_n),\,1\le\tau\le \sigma_n,\, n\in\N \}$ of random measures are tight.
\item[{\rm (b)}] uniform integrability of
$\{\tr{T_n(\sigma_n)}:\, n\in\N\}$ implies that, for $\tau\in \Upsilon(F_n)$,
\begin{align*}
E[T_n(\tau)]\colon V \to V,\qquad  \scapro{E[T_n(\tau)]v}{v}=E[\scapro{T_n(\tau)v}{v}]
\end{align*}
defines a relatively compact set $\{E[T_n(\tau)]:\,\tau\in \Upsilon(F_n),\,1\le\tau\le \sigma_n,\, n\in\N\}$ of $S$-operators.
\item[{\rm (c)}]
 uniform integrability of
$\{N_n(\sigma_n)(V):\, n\in\N\}$ implies that
$\{E[N_n(\tau)]:\, \tau\in \Upsilon(F_n),\,1\le\tau\le \sigma_n,\, n\in\N\}$ is relatively compact.

%If $\{M_{n,1}\ast\dots\ast M_{n,\sigma_n}:\,n\in\N\}$ is tight  and the sets $\{\text{\rm tr}\,[A_n(\sigma_n)]:\,n\in\N\}$ and $\{N_n(\sigma_n):\,n\in\N\}$ are uniformly integrable then the set $\{E[A_n(\sigma_n)]:\,n\in\N\}$ is compact in the space of $S$-operators and
%the set $\{E[N_n(\sigma_n)]:\,n\in\N\}$ is tight in $\M(V)$.
\end{enumerate}
\end{lemma}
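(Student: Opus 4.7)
The strategy for part (a) is to exploit the symmetry of each $M_{n,k}$ via the identity, valid for any symmetric probability measure $\mu$,
\[
1-\chi_\mu(v)=\int_V\big(1-\cos\scapro{v}{h}\big)\,\mu(dh),
\]
which, summed over $k=1,\dots,\tau$ using \eqref{eq.fubini-randomE-general}, gives
\[
\sum_{k=1}^{\tau}\big(1-\chi_{M_{n,k}}(v)\big)=\int_V\big(1-\cos\scapro{v}{h}\big)\,\overline{M}_n(\tau)(dh).
\]
The elementary inequality $1-\cos x \ge \tfrac{2}{\pi^2}x^2$ for $\abs{x}\le\pi$, applied on $\{\norm{h}\le c\}$, yields $\scapro{T_n(\tau)v}{v}\le \tfrac{\pi^2}{2}\sum_{k=1}^{\tau}(1-\chi_{M_{n,k}}(v))$ whenever $\norm{v}\le \pi/c$, while averaging $1-\cos\scapro{v}{h}$ over $v$ in a ball of radius $r=r(c)$ gives a lower bound on the tail mass $N_n(\tau)(V)$ in terms of the same sum. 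Since both bounds are monotone in $\tau$, it is enough to estimate $\sum_{k=1}^{\sigma_n}(1-\chi_{M_{n,k}}(v))$.

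The bridge to the tightness hypothesis uses the symmetry of each $M_{n,k}$ a second time: $\chi_{M_{n,k}}(v)\in[-1,1]$, so the elementary bound $a\le e^{a-1}$ for $a\in[0,1]$ applied to $a_k=\chi_{M_{n,k}}(v)^2$ gives the pathwise inequality
\[
\sum_{k=1}^{\sigma_n}\big(1-\chi_{M_{n,k}}(v)^2\big)\;\le\;-2\ln\abs{\chi_{M_n}(v)},\qquad M_n:=M_{n,1}\ast\cdots\ast M_{n,\sigma_n}.
\]
Tightness of $\{M_n:n\in\N\}$ as random probability measures is equivalent, via \eqref{eq.fubini-randomE} and Prokhorov's theorem, to equicontinuity at $0$ of the deterministic characteristic function $v\mapsto E[\chi_{M_n}(v)]=\chi_{E[M_n]}(v)$. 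Combined with Markov's inequality this gives, for each $\epsilon>0$ and sufficiently small $\norm{v}$, sets $A_n\in\A$ with $P(A_n)>1-\epsilon$ on which $\chi_{M_n}(v)\ge 1-\epsilon$, whence $\sum_{k=1}^{\sigma_n}(1-\chi_{M_{n,k}}(v)^2)\le -2\ln(1-\epsilon)$. The factorisation $1-\chi^2=(1-\chi)(1+\chi)$ together with a case distinction on the sign of $\chi_{M_{n,k}}(v)$ then translates this into a bound on $\sum_{k=1}^{\sigma_n}(1-\chi_{M_{n,k}}(v))$: when $\chi_{M_{n,k}}(v)\ge 0$ the bound is immediate, while the case $\chi_{M_{n,k}}(v)<0$ forces $M_{n,k}$ to place substantial mass on $\{\norm{h}\gtrsim \pi/\norm{v}\}$, a contribution that is absorbed into the tail estimate controlling $N_n$. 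Evaluating the resulting bounds at $v=rf_j$ for varying $j$ and intersecting the countably many high-probability sets verifies \eqref{eq.compactS-finite-trace}--\eqref{eq.compactS-sup-limit} for $T_n(\tau)$ and the Prokhorov criterion \eqref{eq.rm-tight-bounded}--\eqref{eq.rm-tight-tight} for $N_n(\tau)$, uniformly in $\tau\le\sigma_n$ and $n$.

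For parts (b) and (c), the uniform integrability hypothesis lifts the almost-sure bounds on $A_n$ to bounds on expectations. The decomposition
\[
E[T_n(\tau)]=E[T_n(\tau)\1_{A_n}]+E[T_n(\tau)\1_{A_n^c}]
\]
places the first term in the closed convex hull of the compact set produced in (a), which remains relatively compact in $\SS(V)$, while the second term is small in trace norm by uniform integrability of the dominating $\tr T_n(\sigma_n)\ge \tr T_n(\tau)$. Applying this decomposition to both compactness conditions \eqref{eq.compactS-finite-trace}--\eqref{eq.compactS-sup-limit} establishes (b); the proof of (c) is identical with $N_n$ and $N_n(\sigma_n)(V)$ replacing $T_n$ and $\tr T_n(\sigma_n)$, verifying \eqref{eq.rm-tight-bounded}--\eqref{eq.rm-tight-tight} for the expected measures. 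The central technical obstacle throughout is the sign analysis of $\chi_{M_{n,k}}(v)$: the hypothesis controls $\chi_{M_{n,k}}(v)^2$ but not the sign, and the coordinated bookkeeping between the quadratic-form contribution on $\{\norm{h}\le c\}$ (feeding $T_n$) and the tail-mass contribution on $\{\norm{h}>c\}$ (feeding $N_n$) across all the test vectors $v=rf_j$ is what closes the estimate.
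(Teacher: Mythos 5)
Your parts (b) and (c) follow essentially the paper's route (split $E[T_n(\tau)]$ over the high-probability sets from (a) and their complements, control the complement by uniform integrability), and that portion is fine. The problem is in part (a), where your argument rests on the claim that tightness of the random convolutions $\{M_{n,1}\ast\cdots\ast M_{n,\sigma_n}\}$ is \emph{equivalent} to equicontinuity at $0$ of $v\mapsto \chi_{E[M_n]}(v)$. That equivalence is a finite-dimensional fact (L\'evy's continuity theorem); in an infinite-dimensional Hilbert space equicontinuity of characteristic functions at the origin does \emph{not} capture tightness. Concretely, take $M_{n,k}=\tfrac12(\delta_{f_k}+\delta_{-f_k})$ and $\sigma_n=n$: then $\sum_{k=1}^{n}\big(1-\chi_{M_{n,k}}(v)\big)=\sum_{k=1}^n\big(1-\cos\scapro{v}{f_k}\big)\le \tfrac12\norm{v}^2$ uniformly in $n$, so every bound your argument extracts from the characteristic functions is satisfied, yet $\tr{T_n(n)}=n\to\infty$ and the conclusion of (a) fails (consistently with the lemma, since the convolutions are not tight here — but your reduction of the hypothesis cannot see that). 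The quantities you must control, namely the uniform trace bound \eqref{eq.compactS-finite-trace}, the uniform decay of the tail sums \eqref{eq.compactS-sup-limit}, and concentration of $N_n$ on a \emph{compact} (not merely bounded) set as in \eqref{eq.rm-tight-tight}, are precisely the information that genuine tightness carries beyond equicontinuity of $\chi$ at $0$; testing against $v=rf_j$ and averaging over balls recovers only coordinatewise second moments and mass outside balls, which is strictly weaker.

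The paper sidesteps this by never descending to pointwise characteristic-function estimates: it forms the accompanying infinitely divisible random measure $R_n$ with L\'evy measure $\overline{M}_n(\sigma_n)$, uses the inequality $1-\exp\big(\sum_k(\beta_k-1)\big)\le 1-\prod_k\beta_k$ together with Theorem VI.2.3 of \cite{Para} to transfer tightness from the convolutions to $\{R_n\}$, and then invokes the compactness criterion for infinitely divisible laws on a Hilbert space (Theorem VI.5.1 of \cite{Para}), whose characteristics are exactly $T_n(\sigma_n)$ and $N_n(\sigma_n)$. That second theorem is the hard analytic content you are implicitly trying to rederive by hand; without it (or an equivalent Sazonov-type argument) your estimates cannot close. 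The subsidiary issues — the unquantified treatment of the case $\chi_{M_{n,k}}(v)<0$, and the intersection of countably many exceptional sets whose probabilities are not arranged to be summable below $\epsilon$ — are repairable, but the reliance on equicontinuity in place of tightness is not.
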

\begin{proof}
(a) Let $R_n$ denote the  infinitely divisible random measure with characteristic function
\begin{align*}
\chi_{R_n}\colon V\times\Omega\to \C, \qquad
  \chi_{R_n}(v)=\exp\left( \int_V \left(e^{i\scapro{h}{v}}-1\right)\, \overline{M}_n(\sigma_n)(dh)\right).
\end{align*}
The inequality
\begin{align*}
  1-\exp\left( \sum_{k=1}^n(\beta_k-1)\right)\le 1-\prod_{k=1}^n \beta_k,
  \qquad\text{for all }\beta_k\in [0,1],\, n\in\N,
\end{align*}
yields for every $v\in V$ and $n\in\N$  the estimate
\begin{align*}
  1-E\left[\chi_{R_n}(v)\right]
  &=  E\left[1- \exp\left( \sum_{k=1}^{\sigma_n}\left(\chi_{M_{n,k}}(v)-1\right)\right)\right]
\le 1-E\left[\prod_{k=1}^{\sigma_n} \chi_{M_{n,k}}(v)\right].
\end{align*}
Tightness of $\{M_{n,1}\ast\dots\ast M_{n,\sigma_n}:\,n\in\N\}$ implies by Theorem VI.2.3
in \cite{Para} together with \eqref{eq.fubini-randomE} that the set $\{E[R_n]:\, n\in\N\}$ is tight. It follows that for each $\epsilon> 0$ there exists for every $n\in\N$ a set $A_n\in\A$  with $P(A_n)>1-\epsilon$  such that the set
\begin{align*}
  \big\{\1_{A_n}(\omega)R_n(\cdot,\omega)+\1_{A_n^c}(\omega)\delta_0(\cdot):\, \omega\in \Omega,\,n\in\N\big\}
\end{align*}
of infinitely divisible probability measures is relatively compact.
Theorem VI.5.1 in \cite{Para} implies that the set $\big\{T_n(\sigma_n)(\omega):\, \omega\in A_n,\, n\in\N\big\}$ is compact and the set $\big\{N_n(\sigma_n)(\cdot,\omega):\, \omega\in A_n,\, n\in\N\big\}$ is relatively compact. The monotonicity $\scapro{T_n(\tau)f_k}{f_k}\le\scapro{T_n(\sigma_n)f_k}{f_k}$ for all $k\in\N$ and $N_n(\tau)\le N_n(\sigma_n)$ for each $\tau\le \sigma_n$ completes  the proof by \eqref{eq.rm-tight-bounded}, \eqref{eq.rm-tight-tight}
and \eqref{eq.compactS-finite-trace}, \eqref{eq.compactS-sup-limit}.
%by \eqref{eq.compactS-finite-trace} and \eqref{eq.compactS-sup-limit}.

(b) By applying Tonelli's theorem we obtain
\begin{align}\label{eq.compact-A-1}
  \sup_{n\in\N}\tr{ E[T_n(\sigma_n)]}
  = \sup_{n\in\N}E\left[\sum_{k=1}^\infty \scapro{T_n(\sigma_n)f_k}{f_k}\right]
  = \sup_{n\in\N} E\big[\tr{T_n(\sigma_n)}\big]<\infty.
\end{align}
Let $\epsilon>0$ be given and choose $\delta>0$ such that $P(A)\le \delta$ for any $A\in\A$  implies $E[\tr{T_n(\sigma_n)}\1_A]\le \epsilon$ for all $n\in\N$. From part (a) it follows by \eqref{eq.compactS-sup-limit} that there are $A_n\in \A$,  $n\in\N$, with $P(A_n)>1-\delta$ and $N_0\in \N$ such that
\begin{align*}
\sup_{n\in\N} \sup_{\omega\in A_n}\sum_{k=N_0}^\infty \scapro{T_n(\sigma_n)(\omega)f_k}{f_k}\le\epsilon.
\end{align*}
It follows that
\begin{align*}
\sup_{n\in\N}\sum_{k=N_0}^\infty \scapro{E[T_n(\sigma_n)]f_k}{f_k}
&\le \sup_{n\in\N} E\left[\1_{A_n} \sum_{k=N_0}^\infty \scapro{T_n(\sigma_n)f_k}{f_k}\right] +E\left[\tr{T_n(\sigma_n)}\1_{A_n}^c\right]\\
&\le 2\epsilon,
\end{align*}
which shows that
\begin{align}\label{eq.compact-A-2}
  \lim_{N\to\infty} \sup_{n\in\N} \sum_{k=N}^\infty \scapro{E[T_n(\sigma_n)]f_k}{f_k}=0.
\end{align}
Both properties \eqref{eq.compact-A-1} and \eqref{eq.compact-A-2} establish that
$\{E[T_n(\sigma_n)]:\, n\in\N\}$ is relatively compact, which completes the proof by monotonicity
$\scapro{E[T_n(\tau)]f_k}{f_k}\le\scapro{E[T_n(\sigma_n)]f_k}{f_k}$ for all $k\in\N$ for $\tau \le \sigma_n$ by \eqref{eq.compactS-finite-trace} and \eqref{eq.compactS-sup-limit}.

(c) Can be proved as (b).
\end{proof}

\section{Radonification of the increments}\label{se.increments}

In this section we solve the problem (1) and (2) mentioned in the Introduction.
Recall the definition $p(X)=E[1\wedge \norm{X}^2]$ for any $V$-valued random variable $X$ in \eqref{eq.def-metric-L0}. The following inequality originates from the work \cite{Jakubowskietal},
but since we only need a special case we give a short proof here.
\begin{lemma}\label{le.Gauss-dom}
  There exists a universal constant $c>0$ such that for any $\phi\in \L_2(U,V)$ and any cylindrical random variable $Z\colon U\to L^0_P(\Omega;\R)$ we have
   \begin{align*}
    p(\phi Z)\le c \int_U p(Zu)\, (\gamma\circ (\phi^{\ast})^{-1})(du),
   \end{align*}
where $\gamma$ denotes the canonical Gaussian cylindrical measure on $V$.
\end{lemma}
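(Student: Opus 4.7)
The key observation is that because $\phi\in\L_2(U,V)$, its adjoint $\phi^\ast\colon V\to U$ is Hilbert--Schmidt as well, so the push-forward $\mu:=\gamma\circ(\phi^\ast)^{-1}$ is a genuine centred Gaussian Radon measure on $U$ with covariance operator $\phi^\ast\phi$. A convenient representation of $\mu$ is the law of the random variable
\begin{equation*}
Y:=\sum_{k=1}^\infty g_k\,\phi^\ast f_k,
\end{equation*}
where $(g_k)_{k\in\N}$ are i.i.d.\ standard Gaussians on an auxiliary probability space $(\Omega',\A',P')$ and the series converges in $L^2(\Omega';U)$ since $\sum_k\norm{\phi^\ast f_k}^2=\norm{\phi^\ast}_{\L_2}^2<\infty$. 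The plan is to rewrite the right hand side of the claimed inequality as an expectation over $Y$, collapse the resulting conditional Gaussian integral to a one-dimensional Gaussian, and finish by an elementary scalar estimate.

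To transfer the integral to the product space $\Omega\times\Omega'$, set $Y_n:=\sum_{k=1}^n g_k\,\phi^\ast f_k$. By linearity of $Z$ and the radonification identity \eqref{eq.HSradonifying},
\begin{equation*}
ZY_n=\sum_{k=1}^n g_k\,Z(\phi^\ast f_k)=\sum_{k=1}^n g_k\scapro{\phi(Z)}{f_k}.
\end{equation*}
Since $Y_n(\omega')\to Y(\omega')$ in $U$ for $P'$-almost every $\omega'$ and $Z\colon U\to L^0_P(\Omega;\R)$ is continuous, the left hand side converges in probability to a random variable that we denote by $ZY(\omega')$; a standard approximation argument produces a jointly measurable version on $\Omega\times\Omega'$. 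Conditionally on $\phi(Z)$, the partial sums $\sum_{k=1}^n g_k\scapro{\phi(Z)}{f_k}$ are centred Gaussians of variance $\sum_{k=1}^n\scapro{\phi(Z)}{f_k}^2\to\norm{\phi(Z)}^2$, so the conditional distribution of $ZY$ given $\phi(Z)$ is $N(0,\norm{\phi(Z)}^2)$. Fubini--Tonelli applied to the bounded integrand $1\wedge|\cdot|^2$ yields
\begin{equation*}
\int_U p(Zu)\,\mu(du)=E\otimes E'\bigl[1\wedge|ZY|^2\bigr]=E\bigl[1\wedge\norm{\phi(Z)}^2 g^2\bigr],
\end{equation*}
where $g\sim N(0,1)$ is independent of $\phi(Z)$.

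Finally, I would establish the elementary one-dimensional bound: there is a universal constant $c_0>0$ such that $1\wedge a^2\le c_0\,E[1\wedge a^2g^2]$ for every $a\ge 0$. For $a\ge 1$ one has $E[1\wedge a^2g^2]\ge P(|g|\ge 1)$, and for $0\le a<1$ one has $E[1\wedge a^2g^2]\ge a^2\,E[g^2\1_{\{|g|\le 1\}}]$; taking $c_0$ to be the reciprocal of the smaller of these two positive numbers and applying the bound pointwise at $a=\norm{\phi(Z)}$ before taking expectations gives the claim with $c:=c_0$. The main technical point is the first step: since $Z$ is a priori defined only on deterministic elements of $U$, evaluating $Z$ at the random vector $Y$ has to be realised through the continuity of $Z$ and a careful setup of joint measurability on $\Omega\times\Omega'$ before Fubini can be invoked; the remaining ingredients are routine.
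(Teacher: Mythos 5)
Your proof is correct and follows essentially the same route as the paper's: an auxiliary Gaussian randomization reduces both sides to expressions involving $1\wedge\norm{\phi(Z)}^2\xi^2$ for a standard Gaussian $\xi$, the radonification identity $Z(\phi^\ast v)=\scapro{\phi(Z)}{v}$ together with Fubini identifies the right-hand side, and the elementary scalar bound $1\wedge a^2\le c\,E[1\wedge a^2\xi^2]$ finishes. The paper merely phrases the randomization as a cylindrical Gaussian $\Gamma$ on $V$ applied to $\phi(Z)$ rather than as the random series $\sum_k g_k\,\phi^\ast f_k$ in $U$, and derives the scalar bound from $(1\wedge\abs{\alpha})(1\wedge\abs{\beta})\le 1\wedge\abs{\alpha\beta}$ instead of your two-case analysis.
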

\begin{proof}
Let $(\Omega^\prime, \A^\prime, P^\prime)$ be another probability space and
let $\Gamma\colon V\to L_{P^\prime}^0(\Omega^\prime;\R)$ be a cylindrical random variable distributed according to the canonical cylindrical Gaussian distribution $\gamma$ on $V$. From the inequality
\begin{align*}
(1\wedge \abs{\alpha})(1\wedge \abs{\beta})\le 1 \wedge \abs{\alpha\beta}
  \qquad\text{for all }\alpha,\beta \in\R,
\end{align*}
it follows for a real-valued, standard normally distributed random variable $\xi$
that
\begin{align*}
\left(1\wedge \norm{v}^2\right) E^\prime\!\left[1\wedge \abs{\xi}^2\right]\le
   E^\prime\!\left[1\wedge\norm{v}^2\abs{\xi}^2\right]
   = E^\prime\!\Big[1\wedge \abs{\Gamma v}^2\Big]
   \qquad\text{for all }v\in V.
\end{align*}
Consequently, by defining $c:=\left(E^\prime\!\left[1\wedge\abs{\xi}^2\right]\right)^{-1}$ we obtain
\begin{align*}
 (1\wedge \norm{v}^2) \le c  E^\prime\!\Big[1\wedge \abs{\Gamma v}^2\Big]\qquad
  \text{for all }v\in V.
\end{align*}
 It follows that
\begin{align*}
p(\phi Z)=E\left[ 1\wedge \norm{\phi Z}^2\right]
  &\le c E\left[ E^\prime\!\left[[ 1\wedge\abs{\Gamma(\phi Z)}^2\right]\right]\\
 &= c E\left[\int_V \left( 1\wedge  \scapro{v}{\phi Z}^2\right)\,\gamma(dv)\right]\\
 &= c \int_V E\left[ 1\wedge  \abs{Z(\phi^\ast v)}^2 \right]\,\gamma(dv)\\
  &= c\int_U E\left[ 1\wedge  \abs{Zu}^2\right]\, (\gamma\circ (\phi^{\ast})^{-1})(du),
\end{align*}
which completes the proof.
\end{proof}

Let $(L(t):\, t\ge 0)$ be a cylindrical L{\'e}vy process in $U$. We equip the probability space with the filtration generated by $L$ and defined by
\begin{align*}
  \F_t:=\sigma(\{L(s)u:\, u\in U, \, s\in [0,t]\})\qquad\text{for all }t\ge 0.
\end{align*}
Fix the times $0\le s\le t$.
An $\L_2(U,V)$-valued, $\F_s$-measurable random variable $\Phi$
is called {\em simple} if it is of the form
\begin{align}\label{eq.simple}
  \Phi(\omega)=\sum_{i=1}^m \1_{A_i}(\omega) \phi_i\qquad\text{for all }\omega\in\Omega,
\end{align}
for disjoint sets $A_1,\dots, A_m\in \F_s$ and $\phi_1,\dots, \phi_m\in \L_2(U,V)$. The space of all $\L_2(U,V)$-valued, $\F_s$-measurable, simple random variables is
denoted by $S(\Omega, \F_s;\L_2)$.
It follows from \eqref{eq.HSradonifying} that for each $i=1,\dots, m$ there
exists an $V$-valued random variable $\phi_i\big(L(t)-L(s)\big)$ satisfying
\begin{align*}
  \scapro{\phi_i\big(L(t)-L(s)\big)}{v}
  =\big(L(t)-L(s)\big)(\phi_i^\ast v)
  \qquad\text{for all }v\in V.
\end{align*}
%The random variable $\phi_i\big(L(t)-L(s)\big)$ is independent of $\F_s$.
Define an $\F_s$-measurable, $V$-valued random variable by
\begin{align*}
  J(\Phi):=\sum_{i=1}^m \1_{A_i} \phi_i\big(L(t)-L(s)\big).
\end{align*}
In this situation  we define
\begin{align*}
\big(L(t)-L(s)\big)(\Phi^\ast v):=
\scapro{J(\Phi)}{v}
\qquad\text{for all }v\in V.
 \end{align*}
 The following result enables us to extend this definition of {\em radonified increments} from simple to arbitrary random variables $\Phi$.
\begin{theorem}\label{th.JPHI}
Let $0\le s\le t$ be fixed. For each $\F_s$-measurable, $\L_2(U,V)$-valued random variable $\Phi$ there exist an $V$-valued random variable $Y$ and
   a sequence $\{\Phi_n\}_{n\in \N}$ of simple random variables in
  $S(\Omega,\F_s;\L_2)$ with $\Phi_n \to \Phi$ $P$-a.s. such that
  \begin{align*}
    Y=\lim_{n\to \infty} J(\Phi_n)
    \quad\text{ in probability}.
  \end{align*}
Moreover, the limit $Y$ does not depend on the sequence $\{\Phi_n\}_{n\in \N}$.
\end{theorem}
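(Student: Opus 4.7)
The plan is to show that $\{J(\Phi_n)\}$ is Cauchy in probability for any approximating sequence of simple random variables, and that the resulting limit depends only on $\Phi$. Existence of such a sequence is standard: since $\L_2(U,V)$ is a separable Hilbert space, any $\F_s$-measurable $\L_2(U,V)$-valued random variable $\Phi$ is the almost-sure limit in Hilbert-Schmidt norm of some sequence $\{\Phi_n\}\subseteq S(\Omega,\F_s;\L_2)$.

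The technical heart of the argument will be the inequality
\begin{align*}
p\bigl(J(\Phi)-J(\Psi)\bigr)\le c\, E\!\left[\int_U p(Zu)\,\mu_{\Phi-\Psi}(du)\right]
\qquad\text{for all }\Phi,\Psi\in S(\Omega,\F_s;\L_2),
\end{align*}
where $Z:=L(t)-L(s)$, $\mu_\psi:=\gamma\circ(\psi^\ast)^{-1}$ for $\psi\in\L_2(U,V)$, and $c$ is the constant from Lemma \ref{le.Gauss-dom}. I would prove this by refining to a common $\F_s$-partition $\Phi-\Psi=\sum_{k} \1_{C_k}\psi_k$ with deterministic $\psi_k\in\L_2(U,V)$ and then invoking two facts. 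First, the L\'evy property gives that $Z$ is independent of $\F_s$, so each radonified increment $\psi_k(Z)$ is independent of the indicator $\1_{C_k}$, which reduces $p(J(\Phi-\Psi))$ to $\sum_k P(C_k)\,p(\psi_k Z)$. Second, Lemma \ref{le.Gauss-dom} dominates each $p(\psi_k Z)$ by $c\int_U p(Zu)\,\mu_{\psi_k}(du)$, and the sum reassembles the expectation on the right.

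Given this estimate, I would then apply it with $\Phi=\Phi_n$, $\Psi=\Phi_m$ to establish that $\{J(\Phi_n)\}$ is Cauchy. For each fixed $\omega$, $\mu_{(\Phi_n-\Phi_m)(\omega)}$ is centred Gaussian on $U$ whose covariance operator has trace $\norm{(\Phi_n-\Phi_m)(\omega)}_{\L_2}^2$. Splitting the inner integral at $\norm{u}\le\delta$ versus $\norm{u}>\delta$, the first piece is bounded by $\sup_{\norm{u}\le\delta} p(Zu)$, which tends to $0$ as $\delta\to 0$ by continuity of $Z\colon U\to L_P^0(\Omega;\R)$ at the origin, while Chebyshev's inequality bounds the second piece by $\norm{(\Phi_n-\Phi_m)(\omega)}_{\L_2}^2/\delta^2$. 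Setting $\delta:=\norm{(\Phi_n-\Phi_m)(\omega)}_{\L_2}^{1/2}$ drives the inner integral to $0$ almost surely, and dominated convergence (using $p\le 1$) then gives $p(J(\Phi_n)-J(\Phi_m))\to 0$. Hence $J(\Phi_n)\to Y$ in $L_P^0(\Omega;V)$ for some $Y$.

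For the uniqueness claim, the same estimate applied to $\Phi_n-\Phi_n'\to 0$ almost surely (for any other approximating sequence $\{\Phi_n'\}$) yields $J(\Phi_n)-J(\Phi_n')\to 0$ in probability, identifying the limits. The hardest step will be the key estimate in the second paragraph: with no moment assumptions on $L$ we cannot pass to $L_P^1$ or $L_P^2$ and must work throughout in convergence in probability, so the $\F_s$-randomness of $\Phi$ and the cylindrical increment $Z$ must be disentangled via the L\'evy-process independence of increments before Lemma \ref{le.Gauss-dom} can be applied atomwise.
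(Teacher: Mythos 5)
Your proposal is correct and follows the paper's skeleton almost exactly: the existence of an approximating sequence from strong $\F_s$-measurability, the reduction to a Cauchy estimate by linearity, the use of the independence of $Z=L(t)-L(s)$ from $\F_s$ to reduce $p(J(\Phi-\Psi))$ to an average of $p(\psi_k Z)$ over the atoms, and the atomwise application of Lemma \ref{le.Gauss-dom} to arrive at $c\int_U p(Zu)\,E[M](du)$ for the averaged Gaussian image measure. The one place where you genuinely diverge is the final limit argument. The paper shows that the deterministic measures $E[M_n]$ converge weakly to $\delta_0$ (via the characteristic functions $E[e^{-\frac12\norm{\Phi_n u}^2}]\to 1$, Egorov, and tightness of Gaussian measures with uniformly traceable covariances) and then uses that $u\mapsto p(Zu)$ is bounded and continuous. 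You instead give a direct quantitative bound: split the Gaussian integral at radius $\delta$, control the far piece by Chebyshev through the trace $\norm{(\Phi_n-\Phi_m)(\omega)}_{\L_2}^2$ of the covariance, control the near piece by $\sup_{\norm{u}\le\delta}p(Zu)$ (which tends to $0$ with $\delta$ by continuity of $Z$ at the origin), optimise $\delta$ pathwise, and finish with dominated convergence. This is entirely sound and in fact buys something: it avoids the tightness/weak-convergence machinery and yields an explicit modulus, namely $p(J(\Phi)-J(\Psi))\le c\,E\big[g\big(\norm{\Phi-\Psi}_{\L_2}\big)\big]$ for a fixed increasing $g$ with $g(0+)=0$, from which both the Cauchy property and the independence of the limit from the approximating sequence follow at once. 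The paper's route, by contrast, is stylistically consistent with the tightness arguments used throughout the rest of the article.
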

\begin{proof}
  Since $\Phi\colon \Omega\to \L_2(U,V)$ is strongly $\F_s$-measurable there exists a sequence
  $\{\Phi_n\}_{n\in \N}$ of simple random variables in
  $S(\Omega,\F_s;\L_2)$ with $\Phi_n\to\Phi$ $P$-a.s. It remains to show that $(J(\Phi_n))_{n\in\N}$ is a Cauchy sequence in $L_P^0(\Omega;V)$. By linearity it is sufficient to show that $\Phi_n\to 0$ $P$-a.s. for $n\to\infty$ implies that $J(\Phi_n)\to 0$ in probability for $n\to\infty$. For this purpose assume that
\begin{align*}
  \Phi_n(\omega)=\sum_{i=1}^{m_n} \1_{A_{n,i}}(\omega) \phi_{n,i}\qquad\text{for all }\omega\in\Omega,
\end{align*}
for disjoint sets $A_{n,1},\dots, A_{n,m_n}\in \F_s$ and $\phi_{n,1},\dots, \phi_{n,m_n}\in \L_2(U,V)$, $m_n\in\N$ and $\Phi_n\to 0$ $P$-a.s. for $n\to\infty$.
Define the cylindrical random variable $Z:=L(t)-L(s)$. Independence of $Z$ and $\F_s$
implies that $\phi_{n,i}Z$ is also independent of $\F_s$. Using this independence, Lemma \ref{le.Gauss-dom} and \eqref{eq.fubini-randomE}, we obtain for each $n\in\N$ that
\begin{align*}
 p(J(\Phi_n))
 &= E\left[1\wedge \norm{\sum_{i=1}^{m_n} \1_{A_{n,i}} \phi_{n,i}Z }^2  \right]\\
&= E\left[\sum_{i=1}^{m_n} \1_{A_{n,i}} \left(1\wedge \norm{\phi_{n,i}Z }^2 \right) \right]\\
&= \int_{\Omega} \sum_{i=1}^{m_n}\1_{A_{n,i}}(\omega)
    p(\phi_{n,i}Z )\, P(d\omega)\\
&\le c \int_{\Omega} \sum_{i=1}^{m_n}\1_{A_{n,i}}(\omega)
  \left(\int_U p(Zu)\, (\gamma\circ (\phi_{n,i}^{\ast})^{-1})(du)\right)  \, P(d\omega)\\
&= c \int_{\Omega}
  \left(\int_U p(Zu)\, M_n(du,\omega)\right)  \, P(d\omega)\\
&= c \int_U p(Zu)\, E[M_n](du),
\end{align*}
where $c$ denotes the constant derived in Lemma \ref{le.Gauss-dom} and
$M_n$ is the random probability measure defined by
\begin{align*}
M_n\colon\Borel(U)\times \Omega\to [0,1],
\qquad  M_n(B,\omega)= \sum_{i=1}^{m_n}\1_{A_{n,i}}(\omega) (\gamma\circ (\phi_{n,i}^{\ast})^{-1})(B).
\end{align*}
Recall the definition $E[M_n](B):=E[M_n(B)]$ for all $B\in \Borel(U)$.
For each $n\in \N$ and $\omega\in \Omega$ the measure
 $M_n(\cdot,\omega)$ is  Gaussian with expectation $0$ and covariance operator
\begin{align*}
Q_n(\omega)\colon U\to U,\qquad
Q_n(\omega)= \sum_{i=1}^{m_n}\1_{A_{i,n}}(\omega)\phi_{n,i}^\ast\phi_{n,i}.
\end{align*}
Consequently, we have for all $u\in U$  that
\begin{align}\label{eq.char-Gauss-random}
  E[\phi_{M_n}(u)]=E[e^{i\scapro{Q_nu}{u}}]
  =E[e^{i\norm{\Phi_n u}_V^2}]\to 1\quad
  \text{as }n\to\infty.
\end{align}
Egorov's theorem implies that for each $\epsilon>0$ there exists a set $A\in\A$
with $P(A)>1-\epsilon$ such that
\begin{align*}
  \sup_{n\in\N}\sup_{\omega\in \A}\tr{Q_n(\omega)}<\infty.
\end{align*}
As $Q_n(\omega)$ is the covariance operator of the Gaussian measure $M_n(\cdot,\omega)$ it follows that the set $\{M_n(\cdot,\omega):\,\omega\in A, \,n\in\N\}$ is tight. Thus, the set $\{M_n:\,n\in\N\}$ of random measures is tight, which implies together with \eqref{eq.char-Gauss-random} that
$E[M_n]$ converges weakly to the Dirac measure in $0$.
Since the function $u\mapsto p(Zu)$ is bounded and continuous we obtain
\begin{align*}
\int_H p(Zu)\, E[M_n](du)
\to 0 \quad \text{as }n\to\infty.
\end{align*}
Linearity of $J$ guarantees the claimed uniqueness,
which completes the proof.
\end{proof}

Theorem \ref{th.JPHI} enables us to define for each $0\le s\le t$ and  $\F_s$-measurable random variable $\Phi\colon\Omega\to \L_2(U,V)$ the $V$-valued random variable
\begin{align*}
\Phi\big(L(t)-L(s)\big):=\lim_{n\to\infty}J(\Phi_n),
\end{align*}
where $(\Phi_n)_{n\in\N}\subseteq S(\Omega,\F_s;\L_2)$ converges to $\Phi$ $P$-a.s.
We define then the increments of the cylindrical L{\'e}vy process $L$ under the random mapping
$\Phi$ by
\begin{align*}
  \big(L(t)-L(s)\big)(\Phi^\ast v):=
  \left\langle \Phi\big(L(t)-L(s)\big),\,v \right\rangle
  \qquad\text{for all }v\in V.
\end{align*}

We finish this section with calculating the conditional characteristic function of the
radonified increments.
\begin{lemma}\label{le.conditional-exp}
If $0\le s\le t$ and $\Phi\colon\Omega\to \L_2(U,V)$ is an $\F_s$-measurable random variable then it follows for each $v\in V$ that
\begin{align}\label{eq.conditional-inc}
&  E\left[ \exp\left(i \scapro{\Phi\big(L(t)-L(s)\big)}{v}\right)\Big| \F_s\right]=\exp\big((t-s)S(\Phi^\ast v)\big)\qquad\text{$P$-a.s.,}
\end{align}
where $S\colon U\to\C$ denotes the cylindrical L{\'e}vy symbol of $L$
defined in \eqref{eq.Levy-symbol}.
\end{lemma}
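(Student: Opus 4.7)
The approach is to first verify \eqref{eq.conditional-inc} on simple $\F_s$-measurable integrands using the defining independence/stationarity of the cylindrical L\'evy increments, and then to pass to general $\Phi$ via the approximation supplied by Theorem \ref{th.JPHI}.

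For a simple $\Phi=\sum_{i=1}^m \1_{A_i}\phi_i\in S(\Omega,\F_s;\L_2)$ with disjoint $A_i\in\F_s$, the definition of the radonified increment yields
\begin{align*}
\scapro{\Phi(L(t)-L(s))}{v}=\sum_{i=1}^m \1_{A_i}\,\big(L(t)-L(s)\big)(\phi_i^\ast v).
\end{align*}
By the defining property of a cylindrical L\'evy process, for any $u_1',\dots,u_n'\in U$ the $\R^{n+1}$-valued process $\big(L(r)(\phi_i^\ast v),L(r)u_1',\dots,L(r)u_n'\big)_{r\ge 0}$ is a classical L\'evy process; hence $(L(t)-L(s))(\phi_i^\ast v)$ is independent of $\F_s$ with characteristic function $\exp((t-s)S(\phi_i^\ast v))$. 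Multiplying by $\1_{A_i}\in\F_s$, conditioning on $\F_s$, and summing over $i$ gives \eqref{eq.conditional-inc} in the simple case, since $\Phi^\ast v=\phi_i^\ast v$ on each $A_i$ (and both sides equal $1$ on the complement of $\bigcup_i A_i$, where $\Phi=0$).

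For general $\F_s$-measurable $\Phi$, pick an approximating sequence $(\Phi_n)\subset S(\Omega,\F_s;\L_2)$ with $\Phi_n\to\Phi$ $P$-a.s.\ in $\L_2(U,V)$ and $\Phi_n(L(t)-L(s))\to\Phi(L(t)-L(s))$ in probability, as guaranteed by Theorem \ref{th.JPHI}. The left-hand side of \eqref{eq.conditional-inc} written for $\Phi_n$ then converges in $L^1(\Omega)$ to that written for $\Phi$ by conditional dominated convergence, the integrands being bounded by $1$. On the right-hand side, the inequality $\norm{\Phi_n^\ast v-\Phi^\ast v}\le \norm{\Phi_n-\Phi}_{\L_2}\norm{v}$ together with the continuity of $S$ noted after \eqref{eq.Levy-symbol} yields $\exp((t-s)S(\Phi_n^\ast v))\to\exp((t-s)S(\Phi^\ast v))$ $P$-a.s., and the convergence is dominated by $1$ because $\Re S\le 0$. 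Passing to the limit in the simple-$\Phi$ identity finishes the proof.

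No genuine obstacle arises: the core content is the simple-$\Phi$ computation, which is a direct consequence of the independence of cylindrical L\'evy increments, and the extension is a straightforward limit argument resting precisely on the two modes of convergence ($\L_2$ almost-surely and radonified increment in probability) that Theorem \ref{th.JPHI} is designed to deliver.
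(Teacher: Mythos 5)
Your proposal is correct and follows essentially the same route as the paper: verify \eqref{eq.conditional-inc} for simple $\Phi$ using independence and stationarity of the one-dimensional increments $(L(t)-L(s))(\phi_i^\ast v)$, then pass to general $\Phi$ via the approximating sequence from Theorem \ref{th.JPHI}, using convergence in probability of the radonified increments on the left and almost sure convergence together with continuity of $S$ on the right. You merely supply details the paper leaves implicit (the explicit simple-case computation and the justification of the limit passage for the conditional expectations), which is fine.
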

\begin{proof}

If $\Phi$ is simple then it is easy to establish the  equality claimed in \eqref{eq.conditional-inc}. For an arbitrary $\F_s$-measurable random variable
$\Phi\colon\Omega\to \L_2(U,V)$, Theorem \ref{th.JPHI} guarantees that  there exists a sequence $(\Phi_n)_{n\in\N}$ of simple random variables in $\S(\Omega,\F_s;\L_2)$
satisfying $\Phi_n\to \Phi$ $P$-a.s. as $n\to\infty$ and for all $v\in V$:
\begin{align}\label{eq.limit-left}
\lim_{n\to\infty} \big( L(t)-L(s)\big)(\Phi^\ast_n v)
=\big( L(t)-L(s)\big)(\Phi^\ast v)
\quad\text{ in probability}.
\end{align}
On the other hand, as $\Phi_n\to \Phi$ $P$-a.s. the continuity of  the cylindrical L{\'e}vy symbol $S:U\to \C$ yields for all $v\in V$
\begin{align}\label{eq.limit-right}
\lim_{n\to\infty} \exp\big((t-s)S\left(\Phi^\ast_n v \right)\big)
= \exp\big((t-s)S\left( \Phi^\ast v\right)\big)
\quad \text{$P$-a.s.}
\end{align}
The equations \eqref{eq.limit-left} and \eqref{eq.limit-right} show that the relation
\eqref{eq.conditional-inc} can be generalised to arbitrary  $\Phi\in L_P^0(\Omega,\F_s;\L_2)$.
\end{proof}

\section{The stochastic integral}\label{se.stochastic-integral}

We begin the definition of the stochastic integral very classical with simple integrands.
An $\L_2(U,V)$-valued, stochastic process $(\Psi(t):\, t\in [0,T])$ is called {\em simple} if it is of the form
\begin{align}\label{eq.defsimple}
  \Psi(t)=\Phi_0\1_{\{0\}}(0)+ \sum_{j=1}^{N} \Phi_j \1_{(t_j,t_{j+1}]}(t)
  \qquad\text{for all }t\in [0,T],
\end{align}
where $0=t_1< \cdots < t_{N+1}=T$ is a finite sequence of
deterministic times  and each $\Phi_j\colon \Omega\to \L_2(U,V)$ is an
$\F_{t_j}$-measurable random variable for each $j=0,\dots, N$.
The set of all simple $\L_2(U,V)$-valued stochastic processes is denoted by $\H_0(U,V)$.

Let $(\Psi(t):\,t\in [0,T])$ be a simple process in $\H_0(U,V)$ of the form \eqref{eq.defsimple} and $(L(t):\,t\ge 0)$ be
a cylindrical L{\'e}vy process in $U$.  Theorem \ref{th.JPHI} guarantees that for each $j=1,\dots, N$ and $t\in [0,T]$, there exists the random variable
\begin{align*}
  J(\Phi_j)(t):=\Phi_j\big(L(t\wedge t_{j+1})-L(t\wedge t_j)\big)\colon \Omega\to V,
\end{align*}
satisfying
\begin{align*}
  \big(L(t\wedge t_{j+1})-L(t\wedge t_j)\big)(\Phi_j^\ast v)
  =\scaprob{J(\Phi_j)(t)}{v}\qquad\text{for all }v\in V.
\end{align*}
Thus,  we can  define a random variable in $L^0_P(\Omega;V)$ for each $t\in [0,T]$ by
\begin{align*}
I(\Psi)(t)\colon\Omega\to V,
 \qquad I(\Psi)(t):=  J(\Phi_1)(t) + \cdots + J(\Phi_{N})(t).
\end{align*}
Obviously, the random variable $I(\Psi)(t)$ obeys
\begin{align*}%\label{eq.defI_Y}
 \scaprob{I(\Psi)(t)}{v} = \sum_{j=1}^{N} \scaprob{J(\Phi_j)(t)}{v} = \sum_{j=1}^{N} \big(L(t\wedge t_{j+1})-L(t\wedge t_j)\big)(\Phi_j^\ast v)
\end{align*}
for all $v\in V$ and $t\in [0,T]$.

In the following we extend the domain of $I$ to the linear space
\begin{align*}
\H(U,V):=\big\{\Psi\colon [0,T]\times\Omega\to \L_2(U,V): \text{ adapted
 and with  c{\`a}gl{\`a}d paths}  \big\}.
\end{align*}
The trajectories of an element $\Psi$ in $\H(U,V)$ are in the space
$D_-\big([0,T];\L_2(U,V)\big)$ of $\L_2(U,V)$-valued functions which are
continuous from the left and have limits from the right (c{\`a}gl{\`a}d).
Recall from Section \ref{se.preliminaries} that this space is
equipped with the Skorokhod metric $d_J$ defined in \eqref{de.Skorokhod-metric}. As $\L_2(U,V)$ is separable every $\Psi\in \H(U,V)$ can also be considered as a random variable $\Psi\colon \Omega\to D_-\big([0,T];\L_2(U,V)\big)$.

The definition of the stochastic integral for arbitrary integrands in the space $\H(U,V)$ is given by the following result. The assumed approximation by simple processes is presented in the subsequent Lemma \ref{le.approximation}.
\begin{theorem}\label{th.stoch-op-continuous}
For every sequence $(\Psi_n)_{n\in\N}\subseteq \H_0(U,V)$
which converges to some $\Psi\in \H(U,V)$ in probability in the
Skorokhod metric, i.e.\
\begin{align*}
\lim_{n\to\infty} P\left(d_J\big(\Psi_n,\Psi)\big)\ge \epsilon\right)=0
\qquad\text{for all $\epsilon>0$, }
\end{align*}
there exists a $V$-valued, adapted semi-martingale $(I(\Psi)(t):\,t\in [0,T])$ with  c{\`a}dl{\`a}g trajectories obeying for each $t\in [0,T]$ and $\epsilon>0$:
  \begin{align*}
\lim_{n\to\infty} P\big(\norm{I(\Psi_n)(t)-I(\Psi)(t)}\ge \epsilon\big)=0.
  \end{align*}
 The limit $I(\Psi)$ does not depend on the sequence $(\Psi_n)_{n\in\N}$, i.e.\ it is unique up to evanescence.
\end{theorem}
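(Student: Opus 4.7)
The plan is to construct $I(\Psi)(t)$ as a probability limit of $(I(\Psi_n)(t))_{n\in\N}$ by verifying a Cauchy-in-probability property at each $t$, and then to upgrade this pointwise construction to a c{\`a}dl{\`a}g adapted semi-martingale via a stopping-time version of the same tightness argument. The explicit conditional characteristic function of a radonified increment supplied by Lemma~\ref{le.conditional-exp} will serve as the bridge between the cylindrical integrator $L$ and the abstract results of Section~\ref{se.tightness}.

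I would first reduce to a Cauchy problem. By linearity of $I$ on $\H_0(U,V)$ and by refining partitions, $I(\Psi_n)(t)-I(\Psi_m)(t)=I(\Psi_n-\Psi_m)(t)$ with $\Psi_n-\Psi_m$ simple on a common grid $0=s_0<\cdots<s_N=T$, say
\begin{align*}
\Psi_n-\Psi_m=\sum_{k=1}^{N}\Delta_{k}\,\1_{(s_{k-1},s_{k}]},\qquad \Delta_{k}\in L_P^0(\Omega,\F_{s_{k-1}};\L_2(U,V)),
\end{align*}
and $\Psi_n-\Psi_m\to 0$ in Skorokhod in probability, which, since the limit $0$ is continuous, amounts to uniform convergence in probability on $[0,T]$. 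Setting $X_{k}:=\Delta_{k}\bigl(L(s_{k}\wedge t)-L(s_{k-1}\wedge t)\bigr)$ gives $I(\Psi_n-\Psi_m)(t)=\sum_{k=1}^{N}X_{k}$, and by Lemma~\ref{le.conditional-exp} the conditional law $P_{k}$ of $X_{k}$ given $\F_{s_{k-1}}$ has characteristic function $v\mapsto\exp\bigl((s_{k}\wedge t-s_{k-1}\wedge t)S(\Delta_{k}^{\ast}v)\bigr)$. Consequently the convolution $P_{1}\ast\cdots\ast P_{N}$ has characteristic function
\begin{align*}
v\mapsto\exp\!\Bigl(\sum_{k=1}^{N}(s_{k}\wedge t-s_{k-1}\wedge t)\,S(\Delta_{k}^{\ast}v)\Bigr),
\end{align*}
a Riemann sum of $r\mapsto S((\Psi_n(r)-\Psi_m(r))^{\ast}v)$ over $[0,t]$. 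Because $S$ is continuous with $S(0)=0$ and maps bounded subsets of $U$ to bounded subsets of $\C$, after localising by $\tau_{n,m,R}:=\inf\{s:\norm{\Psi_n(s)-\Psi_m(s)}_{\L_2}>R\}\wedge t$ this random characteristic function tends to $1$ in probability, so the corresponding random probability measures form a tight family. Theorem~\ref{th.tightness-tangent} then provides tightness of $\{I(\Psi_n-\Psi_m)(\tau_{n,m,R}):n,m\in\N\}$ in $V$. Applying the same computation to the scalar products $\beta\scapro{X_{k}}{v}$ together with Theorem~\ref{th.conditioning} yields $E\bigl[\exp(i\beta\scapro{I(\Psi_n-\Psi_m)(\tau_{n,m,R})}{v})\bigr]\to 1$ for every $v\in V$ and $\beta\in\R$. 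Tightness combined with convergence of every one-dimensional characteristic function to that of $\delta_{0}$ gives $I(\Psi_n-\Psi_m)(\tau_{n,m,R})\to 0$ in probability; since $P(\tau_{n,m,R}<t)\to 0$ as $n,m\to\infty$ for fixed $R$ by uniform convergence, the same limit holds without localisation and defines $I(\Psi)(t)$. Uniqueness with respect to the approximating sequence follows by the standard interleaving argument.

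For the c{\`a}dl{\`a}g semi-martingale claim, I would repeat the tightness argument with $\sigma_{n}$ in Theorem~\ref{th.tightness-tangent} ranging over arbitrary $\F$-stopping times bounded by $T$, obtaining tightness of the stopped family $\{I(\Psi_n-\Psi_m)(\tau):\tau\in\Upsilon(\{\F_{t}\}),\,\tau\le T\}$. Together with the already-established pointwise convergence of finite-dimensional distributions, an Aldous-type criterion upgrades this to convergence in probability in the Skorokhod space $D([0,T];V)$, producing a c{\`a}dl{\`a}g adapted limit $I(\Psi)$. The semi-martingale property is inherited from the approximants, each $I(\Psi_n)$ being a finite sum of Hilbert-Schmidt radonifications of the classical semi-martingales $(L(\cdot)u)_{u\in U}$ on the intervals of its partition. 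The main obstacle is the localisation: because $S$ is only bounded on bounded subsets of $U$, the tightness hypothesis of Theorem~\ref{th.tightness-tangent} can be verified only after stopping to keep $\norm{\Psi_n(s)-\Psi_m(s)}_{\L_2}$ uniformly bounded along the trajectory, and one must check that the resulting stopping times are compatible both with the decoupled tangent sequence construction underlying Theorem~\ref{th.tightness-tangent} and with the Skorokhod (and hence uniform) convergence of the integrands.
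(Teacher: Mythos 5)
Your overall strategy (conditional characteristic functions via Lemma~\ref{le.conditional-exp}, tightness by decoupling via Theorem~\ref{th.tightness-tangent}, weak convergence via Theorem~\ref{th.conditioning}, then Lemma 2.4 of \cite{Jakubowski88} to combine the two) is exactly the paper's, but the central step is missing. You claim that because the random characteristic function of $P_1\ast\cdots\ast P_N$ tends to $1$ in probability after localisation, ``the corresponding random probability measures form a tight family.'' In an infinite-dimensional Hilbert space this inference is false: the measures $\delta_{f_n}$ have characteristic functions $e^{i\scapro{v}{f_n}}\to 1$ pointwise, yet are not tight. Your localisation by $\tau_{n,m,R}$ only keeps $\norm{\Psi_n(s)-\Psi_m(s)}_{\L_2}$ \emph{bounded}, and a bounded set of Hilbert--Schmidt operators does not push the cylindrical distribution $\lambda$ of $L(1)$ to a relatively compact family of Radon measures. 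What the paper actually uses is \emph{compactness}: tightness of $\{\Psi_n\}$ in the Skorokhod space gives, off an event of small probability, that all values $\Phi_{n,j}$ lie in one compact $K\subseteq\L_2(U,V)$ (Proposition 1.6 of \cite{Jakubowski-OnSkorokhod}); then Proposition~\ref{pro.condcompunderK} shows $\{\lambda\circ\phi^{-1}:\phi\in K\}$ is relatively compact in $\M_1(V)$, and Lemma~\ref{le.infdiv-convex} propagates this to all convolution products with total time $\le T$. Without these two results the hypothesis of Theorem~\ref{th.tightness-tangent} is not verified, and this is where the real work of the proof lies. (A related smaller error: $d_J(\Psi_n,\Psi)\to 0$ and $d_J(\Psi_m,\Psi)\to 0$ do \emph{not} give uniform convergence of $\Psi_n-\Psi_m$ to $0$ when $\Psi$ has jumps, so your removal of the localisation ``by uniform convergence'' also fails; the paper only extracts Lebesgue-a.e.\ convergence along subsequences, which suffices for the dominated-convergence step in \eqref{eq.prod-cond-prob}.)

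Two further gaps. First, the semi-martingale property is not ``inherited from the approximants'': convergence in probability at each fixed $t$ (or even ucp) of semi-martingales does not make the limit a semi-martingale. The paper proves it separately (Proposition~\ref{pro.I-semi-martingale}) via the Bichteler--Dellacherie-type criterion of Theorem 2.1 in \cite{Jakubowskietal}, i.e.\ stochastic boundedness of all elementary integrals $\int_0^T\Theta(s)\,I(\Psi)(ds)$, which requires a second full run of the decoupling-tightness machinery with the compositions $\widetilde\Gamma_\ell\circ\Phi_{n,\ell}$. Second, for the c\`adl\`ag/uniform upgrade you invoke ``an Aldous-type criterion'' with stopping times ranging over all of $\Upsilon(\{\F_t\})$; but Theorem~\ref{th.tightness-tangent} requires a \emph{discrete} filtration, so the first-exit time of $\sup_t\norm{I(\Psi_m)(t)-I(\Psi_n)(t)}$ is not admissible. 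The paper resolves this by introducing the auxiliary discretised integral $\tilde I$, whose paths move only at partition points, so that the exit time $\tau_{m,n}$ is automatically partition-valued and the decoupling theorem applies; the discrepancy $I(\Psi_n)(t)-\tilde I(\Psi_n)(t)$ is then shown to vanish separately using $\max_j\abs{t_{n,j+1}-t_{n,j}}\to 0$. You need some substitute for this device for your stopping-time argument to go through.
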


In this work, we define simple integrands  as stochastic processes which equal a random variable $\Phi_j$ on deterministic  but not random intervals $(t_j,t_{j+1}]$. This guarantees that the radonification of the
increments $J(\Phi_j)(t)$ is well defined by the approach in Section~\ref{se.increments}. If the interval $(t_j,t_{j+1}]$ were random the integrand $\Phi_j$ would not be independent of $L(t_{j+1})-L(t_j)$ and this method  could not be applied any more.

Simple integrands defined on random intervals are dense in $\H(U,V)$ with respect to the uniform convergence on $[0,T]$ in probability, i.e.\ the so-called ucp convergence. In our case of deterministic partitions, we have to weaken the topology to the Skorokhod topology.

\begin{lemma}\label{le.approximation}
  For every $\Psi\in \H(U,V)$ there exists a sequence $(\Psi_n)_{n\in\N}$ of simple stochastic processes $\Psi_n\in \H_0(U,V)$, each defined on a partition $(t_{n,k})_{k=1,\dots, n}$ of the interval $[0,T]$  with $ \max\limits_{j=1,\dots, n}\abs{t_{n,j+1}-t_{n,j}}\to 0$ for $n\to\infty$ and with $\{\Psi_n(t):\, t\in [0,T]\}$ in the closure of $\{\Psi(t):\, t\in [0,T]\}$
  such that
  \begin{align}\label{eq.approx-seq-J}
     \lim_{n\to\infty}d_J\big(\Psi_n(\omega),\Psi(\omega)\big)= 0
     \quad\text{for all }\omega\in \Omega.
  \end{align}
\end{lemma}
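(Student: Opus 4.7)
The plan is to approximate $\Psi$ by sampling it on a uniform deterministic partition and freezing the process on each subinterval. Setting $t_{n,j} := (j-1)T/n$ for $j = 1, \dots, n+1$, I define
\begin{align*}
\Psi_n(t) := \Psi(0)\,\1_{\{0\}}(t) + \sum_{j=1}^{n} \Psi(t_{n,j})\,\1_{(t_{n,j},\, t_{n,j+1}]}(t), \qquad t \in [0,T].
\end{align*}
Since $\Psi$ is adapted (and c\`agl\`ad), each $\Psi(t_{n,j})$ is $\F_{t_{n,j}}$-measurable, so $\Psi_n \in \H_0(U,V)$, and $\max_{j=1,\dots,n}\abs{t_{n,j+1} - t_{n,j}} = T/n \to 0$. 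Every value attained by $\Psi_n(\cdot,\omega)$ coincides with some $\Psi(t_{n,j},\omega)$, hence $\{\Psi_n(t,\omega):\, t \in [0,T]\} \subseteq \{\Psi(t,\omega):\, t \in [0,T]\}$ and the closure condition holds trivially.

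The crux of the proof is the pathwise Skorokhod convergence \eqref{eq.approx-seq-J}. Fixing $\omega \in \Omega$, I write $\psi := \Psi(\cdot,\omega) \in D_-([0,T];\L_2(U,V))$ and $\psi_n := \Psi_n(\cdot,\omega)$. I would invoke the classical c\`agl\`ad modulus-of-continuity result: for every $\epsilon > 0$ there exist finitely many points $0 = s_0 < s_1 < \cdots < s_K = T$ such that the oscillation of $\psi$ on each open interval $(s_{i-1}, s_i)$ is strictly less than $\epsilon$, where the interior points may be chosen to include every jump of $\psi$ of magnitude at least $\epsilon$.

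For $n$ so large that $T/n < \tfrac{1}{2}\min_i(s_i - s_{i-1})$, each interior $s_i$ lies in a unique partition interval, $s_i \in (t_{n,k_i}, t_{n,k_i+1})$, with the indices $k_i$ pairwise distinct. I then construct a piecewise-linear time change $\lambda_n \in \Lambda$ which equals the identity outside a small neighbourhood of each $t_{n,k_i+1}$, satisfies $\lambda_n(t_{n,k_i+1}) = s_i$ for $i = 1, \dots, K-1$, and fixes the endpoints $0$ and $T$. By construction $\sup_t \abs{t - \lambda_n(t)} < T/n$. On a subinterval $(t_{n,j}, t_{n,j+1}]$ containing no $s_i$, both $t_{n,j}$ and $\lambda_n(t)$ lie in a common oscillation-controlled interval $[s_{i-1}, s_i)$, so $\norm{\psi_n(t) - \psi(\lambda_n(t))}_{\L_2} < \epsilon$. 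On a subinterval $(t_{n,k_i}, t_{n,k_i+1}]$ containing $s_i$, the reparametrization aligns the jump of $\psi$ at $s_i$ with the ``jump'' of $\psi_n$ at $t_{n,k_i+1}$; applying the oscillation bound separately to either side of $s_i$ together with $\psi(t_{n,k_i}) \approx \psi(s_i)$ (the left limit at the jump) controls the error by a fixed multiple of $\epsilon$. Together these estimates yield $d_J(\psi_n, \psi) \le C\epsilon$ for all sufficiently large $n$.

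The main technical obstacle is the simultaneous construction of the single reparametrization $\lambda_n \in \Lambda$ aligning every jump of $\psi$ at once while remaining a strictly increasing continuous bijection of $[0,T]$ with $\sup\abs{t - \lambda_n(t)}$ small, and the accompanying interval-by-interval bookkeeping that distinguishes subintervals straddling a large jump of $\psi$ from those on which $\psi$ only oscillates mildly. These difficulties dissolve once the mesh is chosen much smaller than the minimal gap $\min_i(s_i - s_{i-1})$, after which the construction is essentially geometric and the estimate routine.
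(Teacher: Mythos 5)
Your proposal is correct and is precisely the argument the paper has in mind: the paper's proof consists of a one-line reference to the classical discretisation lemma for deterministic c\`adl\`ag functions (\cite[Le.VII.6.5]{Para}), applied pathwise to the left-endpoint sampling $\Psi_n(t)=\sum_j \Psi(t_{n,j})\1_{(t_{n,j},t_{n,j+1}]}(t)$, which is exactly the construction you carry out in detail. Your verification of adaptedness, of the closure condition via $\{\Psi_n(t,\omega)\}\subseteq\{\Psi(t,\omega)\}$, and of the Skorokhod convergence by a time change aligning the finitely many $\epsilon$-jumps is the standard proof of that cited lemma, so there is nothing to add.
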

\begin{proof}
This follows from the construction of the approximating sequence as the
discretisation in the analogue result for deterministic, c\`adl\`ag functions, see for example \cite[Le.VII.6.5]{Para}.
\end{proof}

\begin{proposition}\label{pro.condcompunderK}
Let $\mu$ be a continuous cylindrical probability  measure on $\Z(U)$ and $K$ be a compact set in $\L_2(U,V)$.  Then the set $\{\mu\circ \phi^{-1}:\, \phi\in K\}$ is relatively compact in the space $\M_1(V)$ of probability measures on $\Borel(V)$.
\end{proposition}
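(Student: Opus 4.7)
The plan is to verify the Sazonov-type characterization of tightness for probability measures on $V$, namely Theorem VI.2.3 in \cite{Para} which is already invoked in the proof of Theorem \ref{th.tightness-tangent}, by producing a relatively compact family of $S$-operators on $V$ that controls $1-\Re\chi_{\mu\circ\phi^{-1}}$ uniformly over $\phi\in K$ up to an arbitrarily small additive error. First I would note that, since every $\phi\in\L_2(U,V)$ radonifies the continuous cylindrical measure $\mu$ to a Borel probability measure on $V$, the characteristic function of the image is given by
\begin{align*}
\chi_{\mu\circ\phi^{-1}}(v)=\chi_\mu(\phi^\ast v)\qquad\text{for all }v\in V.
\end{align*}
Continuity of $\mu$ means continuity of $\chi_\mu$ at $0\in U$, so for every $\epsilon>0$ there is $\delta>0$ with $1-\Re\chi_\mu(u)\le\epsilon$ whenever $\norm{u}\le\delta$; combined with the trivial bound $1-\Re\chi_\mu\le 2$ on the complement this yields the global estimate $1-\Re\chi_\mu(u)\le\epsilon+\tfrac{2}{\delta^2}\norm{u}^2$ for every $u\in U$. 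Inserting $u=\phi^\ast v$ and using $\norm{\phi^\ast v}^2=\scapro{\phi\phi^\ast v}{v}$ gives
\begin{align*}
1-\Re\chi_{\mu\circ\phi^{-1}}(v)\le\scapro{T_\phi v}{v}+\epsilon\qquad\text{for all }v\in V,\;\phi\in K,
\end{align*}
with $T_\phi:=\tfrac{2}{\delta^2}\phi\phi^\ast$, a positive symmetric $S$-operator on $V$ of trace $\tfrac{2}{\delta^2}\norm{\phi}_{\L_2}^2$.

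Next I would verify that $\{T_\phi:\phi\in K\}$ is relatively compact in $\SS(V)$ by means of the criterion \eqref{eq.compactS-finite-trace}--\eqref{eq.compactS-sup-limit}. The uniform trace bound is immediate from the boundedness of $K$ in $\L_2(U,V)$. For the uniform tail condition, I would write $\sum_{k>N}\scapro{T_\phi f_k}{f_k}=\tfrac{2}{\delta^2}\sum_{k>N}\norm{\phi^\ast f_k}^2$, and observe that taking adjoints is an isometric bijection between $\L_2(U,V)$ and $\L_2(V,U)$, so $K^\ast:=\{\phi^\ast:\phi\in K\}$ is compact in $\L_2(V,U)$; applying the Hilbert--Schmidt compactness criterion \eqref{eq.HS-compact2} to $K^\ast$ with the orthonormal basis $\{f_k\}$ of $V$ then produces the required uniform vanishing of the tails. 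Theorem VI.2.3 in \cite{Para} converts the inequality above, together with the relative compactness of $\{T_\phi:\phi\in K\}$, into tightness of $\{\mu\circ\phi^{-1}:\phi\in K\}$, which is equivalent to the claimed relative compactness in $\M_1(V)$ by Prokhorov's theorem.

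The main obstacle is that \emph{compactness} of $K$ in $\L_2(U,V)$, and not merely boundedness, is essential at exactly one place: it is the adjoint tail vanishing from \eqref{eq.HS-compact2} applied to $K^\ast$ that turns the pointwise $S$-operators $T_\phi$ into a relatively compact family, as required by Theorem VI.2.3. The continuity of $\mu$ supplies only the $\epsilon$-offset and the scale $\delta$ in the estimate; everything else is bookkeeping through the adjoint and the Hilbert--Schmidt compactness criterion.
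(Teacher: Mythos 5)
Your argument is correct, and it reaches the conclusion by a route that differs from the paper's in the final conversion step. Both proofs rest on the same two ingredients: the estimate $1-\Re\chi_\mu(u)\le\epsilon+\tfrac{2}{\delta^2}\norm{u}^2$ coming from continuity of $\chi_\mu$ (this is exactly \eqref{eq.estimate-char-fct}), and the uniform vanishing of the tail sums $\sum_{k>N}\norm{\phi^\ast f_k}^2$ over $\phi\in K$, obtained from compactness of $K^\ast$ via \eqref{eq.HS-compact2} --- you have correctly isolated the one place where compactness, rather than mere boundedness, of $K$ is used. The difference is in how the characteristic-function estimate becomes tightness: you feed $1-\Re\chi_{\mu\circ\phi^{-1}}(v)\le\scapro{T_\phi v}{v}+\epsilon$, with the relatively compact family $T_\phi=\tfrac{2}{\delta^2}\phi\phi^\ast$ in $\SS(V)$ verified against \eqref{eq.compactS-finite-trace}--\eqref{eq.compactS-sup-limit}, directly into the Sazonov-type criterion of Theorem VI.2.3 in \cite{Para}, which the paper already invokes in the proof of Theorem \ref{th.tightness-tangent}. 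The paper instead represents $\mu$ as the cylindrical distribution of a cylindrical random variable $Z$, checks the compactness criterion \eqref{eq.Condition-cond-compact1}--\eqref{eq.Condition-cond-compact2} for the family $\{\phi(Z):\phi\in K\}$, and derives the required tail probability bounds by hand from the finite-dimensional Gaussian-integral inequality \cite[Pro.IV.5.2]{Vaketal}. Your version is shorter because it delegates that computation to \cite{Para}; the paper's is more explicit and needs only the inequality of Pro.IV.5.2 rather than the full sufficiency direction of the Sazonov theorem. One small point to make explicit: that $\mu\circ\phi^{-1}$ is a genuine Borel probability measure on $V$ (so that membership in $\M_1(V)$ makes sense) should be justified, e.g.\ by \cite[Th.VI.5.2]{Vaketal} as in \eqref{eq.HSradonifying} after representing $\mu$ by a cylindrical random variable, or noted as a by-product of the same Sazonov-type inequality applied to a single $\phi$.
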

\begin{proof}
According to \cite[Pro.IV.4.2, p.236]{Vaketal} there exist a probability space $(\Omega,\A,P)$ and a cylindrical random variable
$Z\colon U\to L^0_P(\Omega;\R)$ such that $\mu$ is the cylindrical distribution of $Z$.
Then the random variable $\phi(Z)$ is distributed according to the probability measure
$\mu\circ \phi^{-1}$ for each $\phi\in K$.
By \cite[Co.1 in I.3.9, p.52]{Vaketal} the set $\{\mu\circ \phi^{-1}:\, \phi\in K\}$ is relatively compact
if and only if
\begin{align}
 &\text{\rm (i) } \lim_{r\to\infty}\sup_{\phi\in K} P\big( \norm{\phi(Z)}\ge r\big)=0,\label{eq.Condition-cond-compact1}\\
  &\text{\rm (ii) } \lim_{N\to\infty} \sup_{\phi\in K} P\left(\sum_{k=N+1}^\infty \scapro{\phi(Z)}{f_k}^2\ge r\right)=0
  \qquad\text{for each }r>0. \label{eq.Condition-cond-compact2}
 \end{align}
Recall that $(f_k)_{k\in\N}$ denotes the orthonormal basis in $V$.
For fixed $m$ and $N$ in $\N$ with $N<m$ and for $\phi\in K$ define the $m-N$-dimensional random vector $Y:=(\scapro{\phi(Z)}{f_{N+1}},\dots, \scapro{\phi(Z)}{f_m})$. The characteristic  function $\chi_Y\colon\R^{m-N}\to\C$ of $Y$ is given for $\beta=(\beta_{N+1},\dots, \beta_m)\in\R^{m-N}$ by
\begin{align*}
\chi_Y(\beta)&=E\left[\exp\left(i\sum_{k=N+1}^m \beta_k \scapro{\phi(Z)}{f_k}\right)\right]
%&= \chi_{\phi(Z)}\left(\sum_{k=N+1}^m \beta_k f_k\right)
= \chi_Z \left(\sum_{k=N+1}^m  \phi^\ast(\beta_k f_k) \right).
\end{align*}
Let $\epsilon>0$ be given. The continuity of the characteristic function $\chi_Z\colon U\to\C$ implies that
there exists a $\delta>0$ such that
\begin{align}\label{eq.estimate-char-fct}
\abs{1-\chi_Z (u)}\le \epsilon\1_{B_\delta}(u) + 2\1_{B_\delta^c}(u)
\le \epsilon + 2 \frac{\norm{u}^2}{\delta^2}\qquad\text{for all }u\in U,
\end{align}
where $B_\delta:=\{u\in U:\, \norm{u}\le \delta\}$. By applying \cite[Pro.IV.5.2, p.205]{Vaketal} we obtain for every $r>0$ the inequality
\begin{align}
P\left(\sum_{k=N+1}^m \scapro{\phi(Z)}{f_k}^2\ge r^2\right)
&= P_Y\left(\beta\in\R^{m-N}:\, \abs{\beta}\ge r\right) \notag\\
&\le 3 \int_{\R^{m-N}} \left(1-\chi_Y\left(\tfrac{\beta}{r}\right)\right)\, d\gamma_{m-N}(\beta), \label{eq.inequality-P-char}
\end{align}
where $\gamma_{m-N}$ denotes the standard normal distribution on $\Borel(\R^{m-N})$.
Inequality \eqref{eq.estimate-char-fct} implies
\begin{align}
&\int_{\R^{m-N}} \left(1-\chi_Y\left(\tfrac{\beta}{r}\right)\right)\, d\gamma_{m-N}(\beta)\notag\\
&\qquad = \int_{\R^{m-N}} \left(1-\chi_Z\left(\tfrac{1}{r}\sum_{k=N+1}^m \phi^\ast (\beta_k f_k)\right)\right)\, d\gamma_{m-N}(\beta_{N+1},\dots,\beta_m)\notag\\
&\qquad \le \int_{\R^{m-N}} \left(\epsilon+ \frac{2}{\delta^2 r^2} \norm{\sum_{k=N+1}^m \phi^\ast(\beta_k f_k)}^2\right)\, d\gamma_{m-N}(\beta_{N+1},\dots,\beta_m) \notag\\
&\qquad = \epsilon + \frac{2}{\delta^2 r^2} \sum_{k=N+1}^m\sum_{\ell=N+1}^m
  \scapro{\phi^\ast(f_k)}{\phi^\ast(f_\ell)}\int_{\R^{m-N}} \beta_k\beta_\ell \,d\gamma_{m-N}(\beta_{N+1},\dots,\beta_m) \notag\\
&\qquad = \epsilon + \frac{2}{\delta^2 r^2} \sum_{k=N+1}^m \norm{\phi^\ast f_k}^2. \label{eq.inequality-int-gauss}
\end{align}
By applying the estimate \eqref{eq.inequality-int-gauss} to inequality \eqref{eq.inequality-P-char}  we obtain for every $r>0$:
\begin{align}\label{eq.inequality-P-tailsum}
\sup_{\phi\in K} P\left(\sum_{k=N+1}^\infty \scapro{\phi(Z)}{f_k}^2\ge r\right)
&= \sup_{\phi\in K} \lim_{m\to \infty} P\left(\sum_{k=N+1}^m \scapro{\phi(Z)}{f_k}^2\ge r\right)\notag\\
&\le 3\epsilon + \frac{6}{\delta^2 r^2}\sup_{\phi\in K} \sum_{k=N+1}^\infty \norm{\phi^\ast f_k}^2.
\end{align}
Since the mapping $\phi\mapsto \phi^\ast$ is continuous on $\L_2(U,V)$ the set $\{\phi^\ast:\, \phi\in K\}$
is compact in $\L_2(V,U)$. Thus, Condition \eqref{eq.Condition-cond-compact2} follows from \eqref{eq.inequality-P-tailsum} by the characterisation \eqref{eq.HS-compact2} of
the compact set $K$.
Similarly, the inequalities \eqref{eq.inequality-P-char} and \eqref{eq.inequality-int-gauss}
imply for every $r>0$
\begin{align*}
P\big( \norm{\phi(Z)}\ge r\big)
= \lim_{m\to\infty} P\left(\sum_{k=1}^m \scapro{\phi(Z)}{f_k}^2\ge r^2\right)
\le  3\epsilon + \frac{6}{\delta^2 r^2}\sup_{\phi\in K} \norm{\phi}_{\L_2}^2.
\end{align*}
Thus,  Condition \eqref{eq.Condition-cond-compact1} follows from boundedness of $K$, which completes the proof.
\end{proof}

\begin{lemma}\label{le.infdiv-convex}
If the set $\{\mu_\alpha:\, \alpha\in J\}$ of infinitely divisible probability measures $\mu_\alpha$
on $\Borel(V)$ for an arbitrary index set $J$ is relatively compact in $\M_1(V)$ then
\begin{align*}
& \Big\{\mu_{\alpha_1}^{\ast t_1}\ast \dots \ast \mu_{\alpha_n}^{\ast t_n} :\; \alpha_i\in J,\  t_i\ge 0, \, t_1+\dots +t_n\le T \text{ for }i=1,\dots, n,\, n\in\N  \Big\}
\end{align*}
is also relatively compact in $\M_1(V)$.
\end{lemma}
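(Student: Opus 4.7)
My plan is to prove tightness of the family $\mathcal{K}$ of admissible convolutions, which by Prokhorov's theorem (in the Polish space $V$) is equivalent to the claimed relative compactness in $\M_1(V)$, by invoking Parthasarathy's equi-$S$-continuity criterion (Theorem VI.2.3 in \cite{Para}) in the same spirit as in the proof of Theorem \ref{th.tightness-tangent}. First I would use infinite divisibility of each $\mu_\alpha$ to write its characteristic function uniquely as $\chi_{\mu_\alpha}(v)=\exp(\psi_\alpha(v))$ with $\psi_\alpha\colon V\to\C$ continuous, $\psi_\alpha(0)=0$ and $\Re\psi_\alpha\le 0$; any convolution $\mu=\mu_{\alpha_1}^{\ast t_1}\ast\dots\ast\mu_{\alpha_n}^{\ast t_n}$ in $\mathcal{K}$ is then itself an infinitely divisible probability measure on $V$ with characteristic function $\chi_\mu(v)=\exp\bigl(\sum_{i=1}^n t_i\psi_{\alpha_i}(v)\bigr)$.

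The crucial step is a uniform bound on $1-\chi_\mu(v)$ for $\mu\in\mathcal{K}$ in terms of $\sup_{\alpha\in J}\abs{1-\chi_{\mu_\alpha}(v)}$. I would combine the elementary estimates $\abs{1-e^z}\le\abs{z}$ for $\Re z\le 0$ and $\abs{\log(1-w)}\le 2\abs{w}$ for $\abs{w}\le 1/2$: on any convex (hence star-shaped) neighbourhood of the origin on which $\sup_\alpha\abs{1-\chi_{\mu_\alpha}(\cdot)}\le 1/2$, the continuous branch $\psi_\alpha$ agrees with the principal logarithm of $\chi_{\mu_\alpha}$ along the segment from $0$ to $v$, so that $\abs{\psi_{\alpha}(v)}\le 2\abs{1-\chi_{\mu_\alpha}(v)}$, and consequently
\begin{align*}
\abs{1-\chi_\mu(v)}
&\le \sum_{i=1}^n t_i\abs{\psi_{\alpha_i}(v)}\\
&\le 2T\sup_{\alpha\in J}\abs{1-\chi_{\mu_\alpha}(v)},
\end{align*}
uniformly over $n\in\N$, over $\alpha_i\in J$ and over $t_i\ge 0$ with $t_1+\dots+t_n\le T$.

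Finally I would apply Parthasarathy's criterion twice. Relative compactness of $\{\mu_\alpha:\,\alpha\in J\}$ furnishes, for each $\delta\in(0,1/2]$, an $S$-operator $T_\delta\in\SS(V)$ with $\scapro{T_\delta v}{v}\le 1\Rightarrow\sup_\alpha\abs{1-\chi_{\mu_\alpha}(v)}\le\delta$. Given $\epsilon>0$ and choosing $\delta:=\min(\epsilon/(2T),1/2)$, the preceding estimate forces $\abs{1-\chi_\mu(v)}\le\epsilon$ for every $\mu\in\mathcal{K}$ whenever $\scapro{T_\delta v}{v}\le 1$, which is precisely the equi-$S$-continuity at $0$ required by the criterion to conclude that $\mathcal{K}$ is tight. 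The main obstacle I foresee is securing the logarithmic estimate $\abs{\psi_\alpha(v)}\le 2\abs{1-\chi_{\mu_\alpha}(v)}$ uniformly in $\alpha\in J$; this is what forces the two-step use of Parthasarathy's criterion, first isolating a convex $S$-neighbourhood of $0$ on which every $\chi_{\mu_\alpha}$ stays simultaneously in a disc around $1$ so that the continuous and principal branches of the logarithm coincide, and only then estimating the convolved characteristic functions.
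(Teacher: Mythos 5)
There is a genuine gap at the first application of Parthasarathy's criterion. You assume that relative compactness of $\{\mu_\alpha:\,\alpha\in J\}$ furnishes, for each $\delta\in(0,1/2]$, a \emph{single} $S$-operator $T_\delta$ such that $\scapro{T_\delta v}{v}\le 1$ implies $\sup_{\alpha\in J}\abs{1-\chi_{\mu_\alpha}(v)}\le\delta$. This uniform, single-operator form of equi-$S$-continuity does not follow from relative compactness and is in fact false in general. Take $c_i\to 0$ with $\sum_i c_i=\infty$ and let $\mu_i$ be the compound Poisson (hence infinitely divisible) law with characteristic function $\chi_{\mu_i}(v)=\exp\bigl(\cos(\sqrt{c_i}\scapro{v}{f_i})-1\bigr)$. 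Then $\mu_i$ converges weakly to $\delta_0$, so $\{\mu_i\}$ is relatively compact; but $\abs{1-\chi_{\mu_i}(\pi c_i^{-1/2}f_i)}=1-e^{-2}>1/2$, so any $T_\delta$ as above with $\delta\le 1/2$ would have to satisfy $\scapro{T_\delta f_i}{f_i}>c_i/\pi^2$ for every $i$, whence $\tr{T_\delta}=\infty$. What the necessity half of Theorem VI.2.3 in \cite{Para} actually provides (in the form used in the proof of Theorem \ref{th.tightness-tangent}) is a dominating operator $S_\alpha$ \emph{depending on} $\alpha$ and ranging over a relatively compact subset of $\SS(V)$, together with an additive error $\epsilon$; with that version your logarithmic estimate $\abs{\psi_\alpha(v)}\le 2\abs{1-\chi_{\mu_\alpha}(v)}$ is only available on an $\alpha$-dependent neighbourhood of the origin, and the subsequent summation over $i$ no longer closes. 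Your remaining steps --- the continuous logarithm, $\abs{1-e^z}\le\abs{z}$ for $\Re z\le 0$, the bound $\abs{\log(1-w)}\le 2\abs{w}$ on $\abs{w}\le 1/2$, and the sufficiency direction of the criterion --- are all sound.

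This is precisely the difficulty the paper avoids by not arguing through characteristic functions at all: it invokes the compactness criterion for sets of infinitely divisible laws in terms of their L\'evy--Khintchine characteristics (Theorem VI.5.3 in \cite{Para}), computes the triplet of $\mu_{\alpha_1}^{\ast t_1}\ast\dots\ast\mu_{\alpha_n}^{\ast t_n}$ as $\bigl(\sum_i t_i a_{\alpha_i},\sum_i t_i Q_{\alpha_i},\sum_i t_i\nu_{\alpha_i}\bigr)$, and verifies the three conditions directly --- Mazur's theorem for the shifts, Prokhorov's theorem for the restricted L\'evy measures, and trace estimates that are linear in $t_1+\dots+t_n\le T$. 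If you wish to retain a characteristic-function argument, you would have to carry out the analogous bookkeeping at the level of the exponents $\psi_{\alpha}$ with $\alpha$-dependent dominating operators, which essentially reproduces the paper's proof.
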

\begin{proof}
According to \cite[Th.VI.5.3, p.187]{Para} the  set $\{\mu_\alpha:\,\alpha \in J\}$
 of infinitely divisible probability measures $\mu_\alpha$ with characteristics
 $(a_\alpha,Q_\alpha,\nu_\alpha)$ is relatively compact if and only if the following
 three conditions are satisfied:
 \begin{enumerate}
   \item[(1)] the set $\{a_\alpha:\,\alpha\in J\}\subseteq V$ is relatively compact;
   \item[(2)] the set $\{\nu_\alpha:\,\alpha\in J\}$ restricted to the complement of any neighborhood of the origin is relatively compact;
   \item[(3)] the operators $T_\alpha:V\to V$ defined by
   \begin{align*}
       \scapro{T_\alpha v}{v}:=\scapro{Q_\alpha v}{v}+\int_{\norm{h}\le 1} \scapro{v}{h}^2\, \nu_\alpha (dh)
   \end{align*}
    satisfy the conditions
    \begin{align*}
       {\rm (i)}&\;\sup_{\alpha\in J} \sum_{k=1}^\infty \scapro{T_\alpha f_k}{f_k}<\infty,\\
       {\rm (ii)}&\;\lim_{N\to\infty} \sup_{\alpha\in J} \sum_{k=N}^\infty \scapro{T_\alpha f_k}{f_k}=0.
    \end{align*}
 \end{enumerate}
For $\alpha_i\in J$, $t_i\ge 0$ and $t_1+\dots +t_n\le T$,  the infinitely divisible probability  measure $\mu_{\alpha_1}^{\ast t_1}\ast \dots \ast \mu_{\alpha_n}^{\ast t_n}$ has the characteristics
\begin{align*}
  \Big(\sum_{i=1}^n t_i a_{\alpha_i}, \sum_{i=1}^n t_i Q_{\alpha_i},
   \sum_{i=1}^n t_i \nu_{\alpha_i}\Big).
\end{align*}
It remains to show that these characteristics satisfy the corresponding Conditions (1) -- (3) above.
\begin{enumerate}
\item[(1)] Define the set
      \begin{align*}
 D:=\left\{\sum_{i=1}^n t_i a_{\alpha_i}:\, \alpha_i\in J,\  t_i\ge 0, \, t_1+\dots +t_n\le T \text{ for }i=1,\dots, n,\, n\in\N  \right\}.
     \end{align*}
Since the set $A:=\{a_\alpha:\, \alpha \in J\}$ is relatively compact, it follows by a Theorem of S. Mazur that the convex hull co$(A)$ of $A$ is relatively compact.
Since the mapping
\begin{align*}
  m:[0,T]\times \overline{\text{\rm co}(A)}\to V, \qquad m(t,v)=tv,
\end{align*}
 is continuous and $D\subseteq m\big([0,T]\times \overline{\text{\rm co}(A)}\big)$, we can conclude that the set $D$ is relatively compact.
\item[(2)] Prohorov's Theorem guarantees that the set $\{\nu_\alpha:\,\alpha\in J\}$ restricted to the complement of any neighborhood of the origin is tight and uniformly bounded in total variation norm. Clearly, the same applies to
         \begin{align*}
  \left\{\sum_{i=1}^n t_i \nu_{\alpha_i}:\, \alpha_i\in J,\  t_i\ge 0, \, t_1+\dots +t_n\le T \text{ for }i=1,\dots, n,\, n\in\N  \right\},
     \end{align*}
  and another application of Prohorov's Theorem shows that this set  restricted to the complement of any neighborhood of the origin is relatively compact.
\item[(3)] For every $n,\,N\in \N$ one obtains
\begin{align*}
&  \sup_{\substack{t_1+\dots +t_n\le T\\ t_1,\dots,t_n\ge 0}}\sup_{\alpha_1,\dots,\alpha_n\in J}   \sum_{k=N}^\infty\sum_{i=1}^n \scapro{t_i T_{\alpha_i}f_k}{f_k}\\
&\qquad \qquad\le  \sup_{\substack{t_1+\dots +t_n\le T\\ t_1,\dots,t_n\ge 0}}\sum_{i=1}^n t_{i} \;
\sup_{\alpha\in J}\sum_{k=N}^\infty\scapro{ T_{\alpha}f_k}{f_k}\\
&\qquad \qquad\le T \sup_{\alpha\in J}\sum_{k=N}^\infty\scapro{ T_{\alpha}f_k}{f_k}
\to 0 \qquad \text{as }N\to\infty.
\end{align*}
Analogously we conclude
    \begin{align*}
    \sup_{\substack{t_1+\dots +t_n\le T\\ t_1,\dots,t_n\ge 0}}
\sup_{\alpha_1,\dots,\alpha_n\in J}   \sum_{k=1}^\infty\sum_{i=1}^n \scapro{t_i T_{\alpha_i}f_k}{f_k}
\le    T \sup_{\alpha\in J}\sum_{k=1}^\infty\scapro{ T_{\alpha}f_k}{f_k}<\infty.
\end{align*}
\end{enumerate}
The proof is completed.
\end{proof}

For the proof of Theorem \ref{th.stoch-op-continuous} we now introduce an alternative definition of a stochastic integral $\tilde{I}(\Psi)(t)$ for simple integrands $\Psi$. Its definition guarantees that the integral processes $(\tilde{I}(\Psi_n)(t):\, t\in [0,T])$ for an approximating sequence $(\Psi_n)$ of simple integrands converge uniformly in probability (Proposition \ref{pro.tilde-I}), which guarantees that its limit has  c{\`a}dl{\`a}g trajectories. The original stochastic integral $I(\Psi_n)(t)$ converges only as a $V$-valued random variable, i.e.\ at each fixed time $t\in [0,T]$; see Theorem \ref{th.stoch-op-continuous}.

For the definition assume that $\Psi\in \H_0(U,V)$ is of the form
\begin{align*}
  \Psi(t)=\Phi_0\1_{\{0\}}(0)+ \sum_{j=1}^{N} \Phi_j \1_{(t_j,t_{j+1}]}(t)
  \qquad\text{for all }t\in [0,T],
\end{align*}
where $0=t_1< \cdots < t_{N+1}=T$ is a finite sequence of
deterministic times  and each $\Phi_j\colon \Omega\to \L_2(U,V)$ is an
$\F_{t_j}$-measurable random variable for each $j=0,\dots, N$.
Then we define a $V$-valued stochastic process $(\tilde{I}(\Psi)(t):\,
t\in [0,T])$ by
\begin{align*}
\tilde{I}(\Psi)(t):=\begin{cases}\displaystyle
 0, & \text{if }t\in [0,t_2),\\\displaystyle
 \sum_{j=1}^k \Phi_j \big(L(t_{j+1})-L(t_j)\big),&\text{if } t\in [t_{k+1},t_{k+2}) \text{ for }k\in \{1,\dots, N-1\},\\\displaystyle
  \sum_{j=1}^N \Phi_j \big(L(t_{j+1})-L(t_j)\big),&\text{if } t=T.
\end{cases}
\end{align*}
Obviously, $\tilde{I}(\Psi)$ is an adapted stochastic process in $V$ with
 c{\`a}dl{\`a}g trajectories.

\begin{proposition}\label{pro.tightness}
For every $n\in\N$, let $\Psi_n$ be a simple stochastic process in $\H_0(U,V)$ defined on a partition $\{t_{n,k}\}_{k=1\dots, N_n+1}$ and define $\G_n:=\{\F_{t_{n,k}}:\,k=1,\dots, N_n+1\}$. If $\{\Psi_n:\, n\in\N\}$ is tight in $\M_1\big(D_-\big([0,T];\L_2(U,V)\big)\big)$ then
\begin{align*}
  \big\{ \tilde{I}(\Psi_n)(\tau):\, \,\tau\in\Upsilon(\G_n),\, n\in\N\big\}
\end{align*}
is tight in $V$, where
$
\Upsilon(\G_n)=\big\{\tau\colon \Omega\to\{t_{n,1},\,\dots, t_{n,N_n+1}\}:\text{ is stopping
time for $\G_n$}.\big\}
$
\end{proposition}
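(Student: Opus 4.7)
The plan is to apply the tightness-by-decoupling result (Theorem \ref{th.tightness-tangent}) to the discrete-time sequence of increments assembling $\tilde I(\Psi_n)$. Writing $\Psi_n$ on its partition as $\Psi_n=\sum_{j}\Phi_{n,j}\1_{(t_{n,j},t_{n,j+1}]}$, set
$X_{n,k}:=\Phi_{n,k}\bigl(L(t_{n,k+1})-L(t_{n,k})\bigr)$ for $k=1,\ldots,N_n$, and equip this sequence with the filtration $F_n:=\{\F_{n,k}\}_{k\ge 0}$ defined by $\F_{n,k}:=\F_{t_{n,k+1}}$, so that $\Phi_{n,k}$ is $\F_{n,k-1}$-measurable. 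Each $\tau\in\Upsilon(\G_n)$ attains values in the partition, hence corresponds to an $F_n$-stopping time $\tau'$ with values in $\{0,1,\ldots,N_n\}$, and by construction $\tilde I(\Psi_n)(\tau)=X_{n,1}+\cdots+X_{n,\tau'}$; the case $\tau'=0$ yields $0$ and is trivial, so one only needs to treat $\tau'\ge 1$.

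The first substantive step is to identify the regular conditional distribution $P_{n,k}$ of $X_{n,k}$ given $\F_{n,k-1}=\F_{t_{n,k}}$. Since $L(t_{n,k+1})-L(t_{n,k})$ is independent of $\F_{t_{n,k}}$ and $\Phi_{n,k}$ is $\F_{t_{n,k}}$-measurable, Lemma \ref{le.conditional-exp} gives $P_{n,k}(\cdot,\omega)=\bigl(\mu_L\circ\Phi_{n,k}(\omega)^{-1}\bigr)^{\ast(t_{n,k+1}-t_{n,k})}$, where $\mu_L$ is the (continuous, infinitely divisible) cylindrical distribution of $L(1)$. Consequently $P_{n,1}\ast\cdots\ast P_{n,\tau'}$ is a random convolution of infinitely divisible image measures of $\mu_L$ with total ``time'' bounded by $T$.

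Next, fix $\epsilon>0$ and use the assumed tightness of $\{\Psi_n\}$ to choose a compact set $\mathcal K\subseteq D_-([0,T];\L_2(U,V))$ with $P(\Psi_n\in\mathcal K)>1-\epsilon$ for every $n$. By the standard Skorokhod-space compactness criterion the closure $K$ of the union of ranges $\bigcup_{\psi\in\mathcal K}\psi([0,T])$ is compact in $\L_2(U,V)$. On the event $A_n:=\{\Psi_n\in\mathcal K\}$, every $\Phi_{n,k}(\omega)$ lies in $K$. Proposition \ref{pro.condcompunderK} therefore produces a relatively compact family $\{\mu_L\circ\phi^{-1}:\phi\in K\}\subset\M_1(V)$ of infinitely divisible probability measures, and Lemma \ref{le.infdiv-convex} extends this to the family of all convolutions $(\mu_L\circ\phi_1^{-1})^{\ast t_1}\ast\cdots\ast(\mu_L\circ\phi_m^{-1})^{\ast t_m}$ with $\phi_i\in K$ and $t_1+\cdots+t_m\le T$. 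In particular, on $A_n$ the random measures $P_{n,1}\ast\cdots\ast P_{n,\tau'}$ take values in this relatively compact subset of $\M_1(V)$, which is precisely the uniform-on-a-large-event condition \eqref{eq.def-tight-rm} required of the random measures in Theorem \ref{th.tightness-tangent}.

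To finish, define the $F_n$-stopping time $\sigma_n:=\inf\{k\ge 1:\Phi_{n,k}\notin K\}\wedge N_n$, so that $P(\sigma_n=N_n)\ge P(A_n)>1-\epsilon$. Theorem \ref{th.tightness-tangent} then delivers tightness of $\{X_{n,1}+\cdots+X_{n,\tau'}:\tau'\in\Upsilon(F_n),\,1\le\tau'\le\sigma_n,\,n\in\N\}$ in $V$. For an arbitrary $\tau\in\Upsilon(\G_n)$ the corresponding $\tau'$ satisfies $\tau'\le\sigma_n$ on the event $\{\sigma_n=N_n\}$, which has probability at least $1-\epsilon$; hence a standard $\epsilon$-argument transfers tightness to the full family $\{\tilde I(\Psi_n)(\tau):\tau\in\Upsilon(\G_n),n\in\N\}$ by letting $\epsilon\to 0$. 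The main obstacle I expect is phrasing the hypothesis of Theorem \ref{th.tightness-tangent} correctly at the level of \emph{random} probability measures rather than deterministic ones: one must verify condition \eqref{eq.def-tight-rm} by exhibiting the common high-probability events $A_n$ on which Proposition \ref{pro.condcompunderK} plus Lemma \ref{le.infdiv-convex} give a deterministic relatively compact target in $\M_1(V)$, and also handle carefully the (standard but non-trivial) reduction from Skorokhod compactness of $\mathcal K$ to norm compactness of the range set $K\subset\L_2(U,V)$.
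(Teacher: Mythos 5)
Your proof follows essentially the same route as the paper: identify the conditional laws of the increments via Lemma \ref{le.conditional-exp} as convolution powers of image measures of the distribution of $L(1)$, extract a compact set $K\subseteq\L_2(U,V)$ from Skorokhod-tightness of $\{\Psi_n\}$, and feed Proposition \ref{pro.condcompunderK} and Lemma \ref{le.infdiv-convex} into Theorem \ref{th.tightness-tangent}. The only difference is cosmetic: your final random stopping time $\sigma_n$ and the $\epsilon$-transfer are superfluous, since once the hypothesis of Theorem \ref{th.tightness-tangent} is verified on the events $A_n$ one may simply take $\sigma_n=N_n$ deterministic, as the paper does.
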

\begin{proof}
 Each $\Psi_n$ is of the form
\begin{align*}
  \Psi_n(t)=\Phi_{n,0}\1_{\{0\}}(t)+\sum_{j=1}^{N_n} \Phi_{n,j} \1_{(t_{n,j},t_{n,j+1}]}(t),
\end{align*}
for $0=t_{n,1}< \cdots < t_{n,N_n+1}=T$   and $\Phi_{n,j}\in L^0_P(\Omega,\F_{t_{n,j}};\L_2)$ for each $j=0,\dots, N_n$ and $n\in\N$.
Define for each  $j=2,\dots, N_n+1$ and $n\in\N$ the $\F_{t_{n,j}}$-measurable random variable
\begin{align*}%\label{eq.def-X}
  X_{n,j}:=\Phi_{n,j-1}\big(L( t_{n,j})-L(t_{n,j-1})\big)\colon\Omega\to V,
\end{align*}
and choose a regular conditional distribution
\begin{align*}
  P_{n,j}\colon \Borel(V)\times \Omega\to [0,1],\qquad
   P_{n,j}(B,\omega)=P\big(X_{n,j}\in B\,\vert \,\F_{t_{n,j-1}}\big)(\omega).
\end{align*}
Lemma \ref{le.conditional-exp} guarantees for every $v\in V$ that
\begin{align*}
  E\big[\exp(i \scapro{X_{n,j}}{v})\,\big\vert \,\F_{t_{n,j-1}}\big]
=\exp\big((t_{n,j}- t_{n,j-1})S(\Phi^\ast_{n,j-1} v)\big)\quad\text{$P$-a.s.,}
\end{align*}
where $S\colon U\to \C$ denotes the cylindrical L{\'e}vy symbol of $L$.
It follows for $P$-a.a. $\omega\in \Omega$ that
\begin{align}\label{eq.cond-dist-lambda}
  P_{n,j}(\cdot,\omega)=\Big(\lambda\circ \big(\Phi_{n,j-1}(\omega)\big)^{-1}\Big)^{\ast( t_{n,j}-t_{n,j-1})},
\end{align}
where $\lambda$ is the cylindrical distribution of $L(1)$. For $\tau\in \Upsilon(\G_n)$
introduce the notation
\begin{align*}
[\tau](\omega):=\inf\big\{k\in \{1,\dots, N_n+1\}:\, \tau(\omega)=t_{n,k}\big\}.
\end{align*}
 Define for
every stopping time $\tau\in \Upsilon(\G_n)$ with $2\le [\tau]\le N_n+1$ the random probability measure
\begin{align*}
P_n(\tau)\colon\Borel(V)\times \Omega\to [0,1],\qquad
 P_n(\tau)=P_{n,2}\ast \dots \ast P_{n,[\tau]}.
\end{align*}
Let $\epsilon>0$ be given. Since $\{\Psi_n:\, n\in\N\}$ is tight there exists a compact set $C\subseteq D_-\big([0,T];\L_2(U,V)\big)$ such that $P(\Psi_n\in C)\ge 1-\epsilon$ for all $n\in\N$. Proposition 1.6 in \cite{Jakubowski-OnSkorokhod} guarantees that there exists a compact set $K\subseteq \L_2(U,V)$ such that $\{\Psi_n\in C\}\subseteq \{\Psi_n(t)\in K\text{ for all } t\in [0,T]\}$ for all $n\in\N$.
Consequently, the set
\begin{align}\label{eq.compactL-2}
 A_n:=\{\Psi_n(t)\in K\text{ for all }t\in [0,T]\}
 =\{\Phi_{n,j}\in K \text{ for all  }j=0,\dots ,N_n\},
\end{align}
satisfies $P(A_n)\ge 1-\epsilon$ for all $n\in\N$.
Denoting $\lambda_{\phi}:=\lambda\circ\phi^{-1}$ for every $\phi\in K$,  Proposition \ref{pro.condcompunderK} guarantees that the set  $\{\lambda_{\phi}:\, \phi\in K\}$ of infinitely divisible probability measures $\lambda_{\phi}$ is relatively compact in $\M_1(V)$.  Lemma \ref{le.infdiv-convex}  yields that the set
\begin{align*}
{\mathfrak X}:=\Big\{\lambda_{\phi_1}^{\ast s_{1}}\ast \dots \ast \lambda_{\phi_{n}}^{\ast s_{n}}:
 s_j\ge 0,\, s_1+\dots +s_n\le T,\, \phi_j\in K,\, j=1,\dots, n,\, n\in\N\Big\}
\end{align*}
is relatively compact in $\M_1(V)$. Since \eqref{eq.cond-dist-lambda} implies
\begin{align*}
  &\big\{P_n(\tau(\omega))(\cdot,\omega):\,  \tau\in \Upsilon(\G_n),\, 2\le [\tau]\le N_n+1, \,\omega\in A_n,\, n\in\N \big\}\\
  &\qquad \subseteq \big\{P_n(k)(\cdot,\omega):\, k\in\{2,\dots, N_n+1\}, \,\omega\in A_n,\, n\in\N \big\}\\
  &\qquad   \subseteq {\mathfrak X},
\end{align*}
it follows that the set
\begin{align*}
  \{P_n(\tau):\, \tau\in \Upsilon(\G_n),\, 2\le [\tau]\le N_n+1,\, n\in\N \}
\end{align*}
of random probability measures is tight. Theorem \ref{th.tightness-tangent} implies that
\begin{align*}
  \{X_{n,2}+\dots + X_{n,[\tau]}:\,  \tau\in \Upsilon(\G_n),\, 2\le[\tau]\le N_n+1,\, n\in\N\}
\end{align*}
is tight which completes the proof.
\end{proof}

\begin{proposition}\label{pro.tilde-I}
For every sequence $(\Psi_n)_{n\in\N}\subseteq \H_0(U,V)$
which converges to some $\Psi\in \H(U,V)$ in probability in the
Skorokhod metric, i.e.\
\begin{align*}
\lim_{n\to\infty} P\left(d_J\big(\Psi_n,\Psi)\big)\ge \epsilon\right)=0
\qquad\text{for all $\epsilon>0$, }
\end{align*}
there exists a $V$-valued, adapted stochastic process $(I(\Psi)(t):\,t\in [0,T])$ with  c{\`a}dl{\`a}g trajectories obeying for each $\epsilon>0$:
  \begin{align*}
\lim_{n\to\infty} P\left(\sup_{t\in [0,T]}\norm{\tilde{I}(\Psi_n)(t)-I(\Psi)(t)}\ge \epsilon\right)=0.
  \end{align*}
 The limit $I(\Psi)$ does not depend on the sequence $(\Psi_n)_{n\in\N}$, i.e.\ it is unique up to evanescence.
\end{proposition}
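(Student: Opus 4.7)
My plan is to prove that $(\tilde I(\Psi_n))_{n\in\N}$ is Cauchy in the complete topology of uniform convergence in probability on $[0,T]$, which will produce a $V$-valued adapted process $I(\Psi)$ with c\`adl\`ag trajectories. Independence of the approximating sequence (uniqueness up to evanescence) then follows by a standard interlacing argument: two sequences $(\Psi_n)$ and $(\Psi_n')$ both approximating $\Psi$ can be merged into a single sequence $\Psi_1,\Psi_1',\Psi_2,\Psi_2',\ldots$ that also converges to $\Psi$ in Skorokhod in probability, so both Cauchy limits must coincide.

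The first step is a reduction to stopping-time evaluations. Passing to a common refinement of the partitions of $\Psi_n$ and $\Psi_m$, the difference $\Psi_{n,m}:=\Psi_n-\Psi_m$ lies in $\H_0(U,V)$ and $\tilde I(\Psi_{n,m})=\tilde I(\Psi_n)-\tilde I(\Psi_m)$. Since $\tilde I(\Psi_{n,m})$ is c\`adl\`ag and piecewise constant on the finite refined grid $0=s_{n,m,1}<\cdots<s_{n,m,N_{n,m}+1}=T$, the stopping time
\begin{align*}
\tau_{n,m}:=\inf\{s_{n,m,k}:\,\|\tilde I(\Psi_{n,m})(s_{n,m,k})\|\ge\epsilon\}\wedge T
\end{align*}
of the refined filtration $\G_{n,m}$ satisfies $\{\sup_t\|\tilde I(\Psi_n)(t)-\tilde I(\Psi_m)(t)\|\ge\epsilon\}=\{\|\tilde I(\Psi_{n,m})(\tau_{n,m})\|\ge\epsilon\}$, so it suffices to show $\tilde I(\Psi_{n,m})(\tau_{n,m})\to 0$ in probability for every such choice of stopping times.

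For this I would combine Proposition \ref{pro.tightness} with the principle of conditioning (Theorem \ref{th.conditioning}). Tightness of $\{\Psi_n\}$ in the Skorokhod space, which follows from Skorokhod-in-probability convergence, transfers to tightness of $\{\Psi_{n,m}\}$ on the common refinement (the refinement leaves trajectories unchanged, and both summands lie in a common compact set with high probability). Proposition \ref{pro.tightness} then yields tightness of $\{\tilde I(\Psi_{n,m})(\tau_{n,m})\}$ in $V$. To identify the subsequential limit as $0$, for each $v\in V$ and $\beta\in\R$ I would apply Theorem \ref{th.conditioning} to the real-valued variables $X_{n,m,k}:=\scapro{\Phi_{n,m,k-1}(L(s_{n,m,k})-L(s_{n,m,k-1}))}{v}$; Lemma \ref{le.conditional-exp} gives
\begin{align*}
\prod_{k=1}^{[\tau_{n,m}]}\Delta_{n,m,k}(\beta)=\exp\left(\int_0^{\tau_{n,m}}S\bigl(\beta\Psi_{n,m}(s)^\ast v\bigr)\,ds\right),
\end{align*}
so that verifying $c(\beta)=1$ amounts to showing that the above integral converges to $0$ in probability. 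Combined with tightness, this will upgrade the resulting distributional convergence $E[e^{i\beta\scapro{\tilde I(\Psi_{n,m})(\tau_{n,m})}{v}}]\to 1$ to convergence in probability of $\tilde I(\Psi_{n,m})(\tau_{n,m})$ to $0$.

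The main obstacle is precisely this last integrated convergence. Because the Skorokhod topology is nonlinear, $\Psi_{n,m}=\Psi_n-\Psi_m$ need not converge to $0$ in Skorokhod, so Skorokhod-continuity of $\psi\mapsto\int_0^T S(\beta\psi(s)^\ast v)\,ds$ cannot be invoked on the difference directly. The resolution is to use tightness to localise, on an event of arbitrarily large probability, to a compact subset $K\subseteq\L_2(U,V)$ in which all relevant values of $\Psi_n$ and $\Psi_m$ live simultaneously (via Proposition~1.6 in \cite{Jakubowski-OnSkorokhod}, as in the proof of Proposition \ref{pro.tightness}); on $K$ the map $\phi\mapsto S(\beta\phi^\ast v)$ is uniformly continuous and bounded, since $S$ is continuous and maps bounded sets to bounded sets. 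The Skorokhod convergences $\Psi_n\to\Psi$ and $\Psi_m\to\Psi$ then translate, via continuity of Lebesgue integration of bounded continuous functionals on the Skorokhod space, into convergence of the respective time integrals to a common limit; subtracting and exploiting uniform continuity of $S$ on $K$ yields $\int_0^{\tau_{n,m}}S(\beta\Psi_{n,m}(s)^\ast v)\,ds\to 0$ in probability, which completes the plan.
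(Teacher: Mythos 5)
Your proposal is correct and follows essentially the same route as the paper: reduction to the stopped differences $\tilde{I}(\Psi_m-\Psi_n)(\tau_{m,n})$ on a merged partition, tightness via Proposition \ref{pro.tightness}, identification of the weak limit via Lemma \ref{le.conditional-exp} and the principle of conditioning, and the upgrade to convergence in probability via Lemma 2.4 of \cite{Jakubowski88}; your handling of the nonlinearity obstacle (localising to a compact set, using $S(0)=0$, continuity of $S$ and almost-everywhere-in-$s$ convergence of $\Psi_n(s)-\Psi_m(s)$ to $0$) is the same dominated-convergence argument the authors carry out along almost surely convergent subsequences. The only cosmetic difference is your interlacing argument for uniqueness, which is equivalent to the paper's direct comparison of two approximating sequences.
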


\begin{proof}
It is suffficient
to show for an arbitrary $\epsilon>0$ that
\begin{align}\label{eq.claim1}
 \lim_{m,n\to\infty} P\left(\sup_{t\in [0,T]}\norm{\tilde{I}(\Psi_m)(t)-\tilde{I}(\Psi_n)(t)}> \epsilon\right)=0.
\end{align}
Recall that the $V$-valued stochastic process $(\tilde{I}(\Psi_m)(t):\, t\in [0,T])$ has  c{\`a}dl{\`a}g paths due to its definition. Define for each $m,n\in\N$ the stopping time
\begin{align*}
\tau_{m,n}:=\inf\left\{ t>0:\, \norm{\tilde{I}(\Psi_m)(t)-\tilde{I}(\Psi_n)(t)}>\epsilon\right\}\wedge T,
\end{align*}
where $\inf\{\emptyset\} =\infty$. By the very definition of $\tau_{m,n}$ it follows that
\eqref{eq.claim1} is satisfied if and only if
\begin{align}\label{eq.claim2}
   \lim_{m,n\to\infty} P\Big(\norm{\tilde{I}(\Psi_m)(\tau_{m,n})-\tilde{I}(\Psi_n)(\tau_{m,n})}> \epsilon\Big)=0.
\end{align}
In order to establish \eqref{eq.claim2}, it is according to Lemma 2.4 in \cite{Jakubowski88} sufficient to show
\begin{align}
 &\text{(i) } \left\{\tilde{I}(\Psi_m)(\tau_{m,n})-\tilde{I}(\Psi_n)(\tau_{m,n}):\, m,n\in \N\right\} \text{ is tight in $V$};\label{eq.set-tight}\\
 &\text{(ii) } \text{for every  $v\in V$ we have}:\notag\\
 &\qquad\qquad \lim_{m,n\to\infty}\big\langle\tilde{I}(\Psi_m)(\tau_{m,n})-\tilde{I}(\Psi_n)(\tau_{m,n})\big\rangle \big\langle v \big\rangle= 0
  \qquad \text{in probability}.\label{eq.conv-weak}
\end{align}
By merging the partitions where $\Psi_m$ and $\Psi_n$ are defined on
 we obtain for every  $m$, $n\in\N$ the representation
\begin{align*}
  \Psi_m(t)-\Psi_n(t)=\Phi_{m,n,0}\1_{\{0\}}(t)+\sum_{j=1}^{N_{m,n}} \Phi_{m,n,j} \1_{(t_{m,n,j},t_{mn,j+1}]}(t)\quad\text{for all }t\in [0,T],
\end{align*}
where $0=t_{m,n,1}<  \cdots < t_{m,n,N_{m,n}+1}=T$ is a finite sequence of
deterministic times  and $\Phi_{m,n,j}\colon\Omega\to\L_2(U,V)$  is an
$\F_{t_{m,n,j}}$-measurable  random variable   for each $j=0,\dots, N_{m,n}$.

In order to establish \eqref{eq.set-tight} note that for each $m$, $n\in\N$ the stochastic process $(\tilde{I}(\Psi_m-\Psi_n)(t):\, t\in [0,T])$
varies only at points of the partition $\pi_{m,n}:=\{t_{m,n,k}:\, k=2,\dots, N_{m,n,}+1\}$.
Consequently, the stopping time $\tau_{m,n}$ only attains values in $\pi_{m,n}$ and thus it follows  $\tau_{m,n}\in \Upsilon(\G_{m,n})$ for $\G_{m,n}
:=\{\F_{t_{m,n,k}}:k=1,\dots, N_{m,n}+1\}$.  Consequently, one can apply Proposition \ref{pro.tightness} to conclude \eqref{eq.set-tight}.

For establishing \eqref{eq.conv-weak},
 define for every $j=2,\dots, N_{m,n}+1$ the $V$-valued random variable
\begin{align*}
  X_{m,n,j}:=\Phi_{m,n,j-1}\big(L( t_{m,n,j})-L(t_{m,n,j-1})\big)\colon \Omega\to V.
\end{align*}
Obviously, we have
\begin{align}\label{eq.X-and-I}
\tilde{I}(\Psi_m)(\tau_{m,n})-\tilde{I}(\Psi_n)(\tau_{m,n})=
X_{m,n,2}+\dots + X_{m,n,[\tau_{m,n}]},
\end{align}
where we use the notation
\begin{align*}
[\tau_{m,n}](\omega):=\inf\big\{k\in \{2,\dots, N_{m,n}+1\}:\, \tau_{m,n}(\omega)=t_{m,n,k}\big\}.
\end{align*}
Lemma \ref{le.conditional-exp} implies  for all $\beta\in\R$ and $v\in V$:
\begin{align*}
F_{m,n,j}(\beta):&=E\left[ e^{i \beta \scapro{X_{m,n,j}(t)}{v}}|\F_{t_{m,n,j-1}}\right]\\
&=  \exp\left( \left(t_{m,n,j}- t_{m,n,j-1}\right) S \left(\beta \Phi_{m,n,j-1}^\ast  v \right)  \right)\quad\text{$P$-a.s.}
\end{align*}
Consequently, we obtain $P$-a.s. that
\begin{align}
  F_{m,n}(\beta):=\prod_{j=2}^{[\tau_{m,n}]} F_{m,n,j}(\beta)
 &=\exp\left( \sum_{j=2}^{[\tau_{m,n}]}\left(t_{m,n,j}- t_{m,n,j-1}\right) S \left( \beta\Phi_{m,n,j-1}^\ast  v\right)  \right)\notag\\
  &= \exp\left(\int_0^{\tau_{m,n}}  S\big(\beta (\Psi_{m}^\ast(s)-\Psi_n^\ast(s)) v \big)\,ds\right).\label{eq.prod-cond-prob}
\end{align}
In order to show $F_{m,n}(\beta)\to 1$ in probability for $m,n\to\infty$ we have to show that each subsequence $(F_{m_k,n_k}(\beta))_{k\in\N}$ has a further subsequence converging  to $1$ $P$-a.s. As $d_J(\Psi_{m_k},\Psi)$ and $d_J(\Psi_{n_k},\Psi)$ converge to $0$ in probability
for $k\to\infty$ there exists subsequences  $(d_J(\Psi_{m_{k_\ell}},\Psi))_{\ell\in\N}$ and $(d_J(\Psi_{n_{k_\ell}}^\prime,\Psi))_{\ell\in\N}$ converging to $0$ $P$-a.s.\ for $\ell\to\infty$. It follows that there exists a set $\Omega_0\in \A$ with $P(\Omega_0)=1$ such that for each
$\omega\in\Omega_0$ we have that
\begin{align}
&\sup_{\ell\in\N}\sup_{t\in [0,T]} \norm{\Psi_{m_{k_\ell}}(t)(\omega)-\Psi_{n_{k_\ell}}(t)(\omega)}_{\L_2}<\infty, \label{eq.diff-bounded}
\intertext{
and that there exists a  Lebesgue null set $N_\omega\subseteq [0,T]$ depending on $\omega$ such that}
&
  \lim_{\ell\to\infty}\norm{\Psi_{m_{k_\ell}}(s)(\omega)-\Psi_{n_{k_\ell}}(s)(\omega)}_{\L_2}=0
  \quad\text{for all }s\in [0,T]\setminus N_\omega.\label{eq.diff-almost-null}
\end{align}
Lemma 3.2 in \cite{Riedle-Cauchy} guarantees that the cylindrical L{\'e}vy symbol $S$ maps bounded sets to bounded sets. Consequently, we can conclude from Lebesgue's Theorem of dominated convergence by applying \eqref{eq.diff-bounded} and \eqref{eq.diff-almost-null},  that
\begin{align*}
 & \lim_{\ell\to\infty} \abs{\int_0^{\tau_{{m_{k_\ell}},{n_{k_\ell}}}(\omega)}  S\big(\beta (\Psi_{{m_{k_\ell}}}^\ast(s)(\omega)-\Psi_{n_{k_\ell}}^\ast(s)(\omega)) v \big)\,ds}\\
 &\qquad \qquad \le   \lim_{\ell\to\infty} \int_0^T  \abs{S\big(\beta (\Psi_{{m_{k_\ell}}}^\ast(s)(\omega)-\Psi_{n_{k_\ell}}^\ast(s)(\omega)) v \big)}\,ds =0.
\end{align*}
Since we considered an arbitrary subsequence we can conclude that  $F_{m,n}(\beta)\to 1$ in probability for $m,n\to\infty$ for every $\beta\in\R$.   The principle of conditioning, Theorem \ref{th.conditioning},  yields
\begin{align*}
\lim_{m,n\to\infty} E\left[ \exp\left(i \beta \sum_{j=2}^{[\tau_{m,n}]} \scapro{X_{m,n,j}}{v}\right)\right]
=1\qquad\text{for every }\beta\in\R.
\end{align*}
Because of the representation \eqref{eq.X-and-I} this establishes \eqref{eq.conv-weak}.

Let $(\Psi_n)_{n\in\N}$ and $(\Psi_n^\prime)_{n\in\N}$ be two sequences  converging to $\Psi$ in probability in the Skorokhod metric $d_J$
and denote by $I(\Psi)$ and $I^\prime(\Psi)$ the limits of
$(\tilde{I}(\Psi_n))_{n\in\N}$ and $(\tilde{I}(\Psi_n^\prime))_{n\in\N}$. As in the proof of \eqref{eq.conv-weak} we can conclude that
\begin{align*}
  \lim_{n\to\infty}\scapro{\tilde{I}(\Psi_n-\Psi_n^\prime)(t)}{v}
  =0 \quad\text{in probability for all $v\in V$ and $t\in [0,T]$},
\end{align*}
which shows $I(\Psi)(t)=I^\prime(\Psi)(t)$ P-a.s.\ for each $t\in [0,T]$.
Since the stochastic processes $I(\Psi)$ and $I^\prime(\Psi)$ have  c{\`a}dl{\`a}g paths it follows that they are indistinguishable.
\end{proof}

Combining Lemma \ref{le.approximation} and Proposition \ref{pro.tilde-I} enables us to define
for every $\Psi\in\H(U,V)$:
\begin{align*}
\I(\Psi):=\lim_{n\to\infty}\widetilde{I}(\Psi_n),
\end{align*}
where $(\Psi_n)_{n\in\N}$ is an approximating sequence of simple processes in $\H_0(U,V)$
and the limit is in probability in the uniform norm as stated in Proposition \ref{pro.tilde-I}.
\begin{proposition}\label{pro.I-semi-martingale}
If $\Psi$ is in $\H(U,V)$ then the stochastic process $(I(\Psi)(t):\, t\in [0,T])$
is a semi-martingale.
\end{proposition}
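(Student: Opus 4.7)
The plan is to reduce the claim to the scalar case and apply the Bichteler--Dellacherie characterisation of semi-martingales.  A standard characterisation (going back to M\'etivier) states that a $V$-valued, adapted, c\`adl\`ag process $X$ is a semi-martingale if and only if the real projection $\scaprob{X(\cdot)}{v}$ is a real-valued semi-martingale for every $v\in V$.  Hence it suffices to fix $v\in V$ and prove that $Y^v(t):=\scaprob{I(\Psi)(t)}{v}$ is a real semi-martingale, which by Bichteler--Dellacherie is equivalent to the continuity at $0$ of the elementary integral map $H\mapsto \int_0^T H(s)\, dY^v(s)$ from bounded simple real predictable processes (endowed with the uniform norm) into $L_P^0(\Omega;\R)$.

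The key step is to establish the identity
\begin{align*}
\int_0^T H(s)\, dY^v(s)=\scaprob{I(H\Psi)(T)}{\,v},
\end{align*}
for every simple real predictable $H=\sum_{k=1}^N h_k\1_{(s_k,s_{k+1}]}$ with each $h_k$ a bounded $\F_{s_k}$-measurable random variable, where $(H\Psi)(s,\omega):=H(s,\omega)\Psi(s,\omega)$ belongs to $\H(U,V)$.  For simple $\Psi\in\H_0(U,V)$ this is a direct computation on a common refinement of the two partitions, using that $h_k\Phi_j\in L_P^0(\Omega,\F_{s_k\vee t_j};\L_2(U,V))$ together with the linearity of the radonification of increments developed in Section~\ref{se.increments}.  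For general $\Psi\in\H(U,V)$, I approximate $\Psi$ by a sequence of simple processes via Lemma~\ref{le.approximation} and pass to the limit on both sides: the right-hand side converges by the continuity of $I$ in Proposition~\ref{pro.tilde-I}, while the left-hand side converges by linearity of the elementary integral.

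Granting the identity, the continuity at $0$ follows easily.  If $(H_n)_{n\in\N}$ is a sequence of simple real predictable processes with $\|H_n\|_\infty\to 0$, then for every $\omega\in\Omega$ one has
\begin{align*}
\sup_{t\in[0,T]}\norm{H_n(t,\omega)\Psi(t,\omega)}_{\L_2}
\le \|H_n\|_\infty \cdot \sup_{t\in[0,T]}\norm{\Psi(t,\omega)}_{\L_2}\;\longrightarrow\;0,
\end{align*}
since $\Psi(\cdot,\omega)$ is c\`agl\`ad and hence bounded on the compact interval $[0,T]$.  In particular $d_J(H_n\Psi,0)\to 0$ pointwise in $\omega$, and Proposition~\ref{pro.tilde-I} gives $I(H_n\Psi)(T)\to 0$ in probability in $V$; projecting onto $v$ establishes $\int_0^T H_n\, dY^v(s)\to 0$ in probability.

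The main obstacle is the verification of the identity $\int_0^T H\, dY^v=\scaprob{I(H\Psi)(T)}{v}$.  On simple $\Psi\in\H_0(U,V)$ it is a tedious but routine algebraic manipulation via refinement of partitions and linearity of radonification; the subtle point is the passage to arbitrary $\Psi\in\H(U,V)$, which must be carried out in a way that is simultaneously compatible with the two-step construction of $I$ (radonification of increments followed by the Skorokhod-limit extension of Proposition~\ref{pro.tilde-I}).  A minor preliminary point, easily checked, is that $H\Psi\in\H(U,V)$ whenever $H$ is simple real predictable and $\Psi\in\H(U,V)$: adaptedness and c\`agl\`ad paths are inherited from the factors.
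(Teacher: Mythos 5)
Your argument breaks down at the very first step. The ``standard characterisation'' you invoke --- that a $V$-valued, adapted, c\`adl\`ag process $X$ is a semi-martingale if and only if $\scapro{X(\cdot)}{v}$ is a real semi-martingale for every $v\in V$ --- is false in infinite dimensions. Only the forward implication holds; the converse fails already for deterministic processes. Take an orthonormal basis $\{f_k\}$ of $V$, continuous functions $g_k\colon[0,T]\to[-1,1]$ whose oscillations live on disjoint subintervals and have total variation $V_k$, and set $X(t)=\sum_k a_k g_k(t)f_k$ with $a_kV_k=1/k$ and $a_k\to 0$ fast enough that $X$ is a continuous $V$-valued path. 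Then every projection $\scapro{X}{v}$ has variation $\sum_k a_kV_k\abs{\scapro{f_k}{v}}<\infty$ (since $(a_kV_k)_k\in\ell^2$), so each projection is a semi-martingale, yet $X$ has infinite variation in $V$ and the elementary integrals $\int_0^T\Theta(s)\,dX(s)$ with operator-valued $\Theta\in\E(V,V)$, $\norm{\Theta}_{V\to V}\le 1$, are unbounded (choose $\Gamma_k$ mapping the increment $X(s_{k+1})-X(s_k)$ to $\norm{X(s_{k+1})-X(s_k)}\,w$ for a fixed unit vector $w$; everything is deterministic, hence adapted). So $X$ is a weak (cylindrical) semi-martingale but not a $V$-valued semi-martingale. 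Your argument, even if the identity $\int_0^T H\,dY^v=\scapro{I(H\Psi)(T)}{v}$ and the continuity at $0$ are carried out correctly, therefore only proves that $I(\Psi)$ is a cylindrical semi-martingale --- which, as Remark \ref{re.semi-martingale} emphasises, is a strictly weaker and for this paper insufficient notion.

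The missing idea is precisely the uniformity over \emph{operator-valued} test integrands. The paper's proof uses Theorem 2.1 of \cite{Jakubowskietal}: $I(\Psi)$ is a semi-martingale if and only if the family $\big\{\norm{\int_0^T\Theta(s)\,I(\Psi)(ds)}:\Theta\in\E(V,V)\big\}$ with $\L(V,V)$-valued simple $\Theta$ bounded by $1$ is stochastically bounded. This cannot be obtained one projection at a time; it is established by writing the elementary integrals for the approximating simple $\Psi_n$ as sums of radonified increments of the compositions $\widetilde{\Gamma}_{\ell}\circ\Phi_{n,\ell}$, observing that these compositions remain in a fixed compact subset $\overline{K}_{\E}$ of $\L_2(U,V)$ (ideal property of Hilbert--Schmidt operators), and then applying the tightness-by-decoupling result, Theorem \ref{th.tightness-tangent}, together with Proposition \ref{pro.condcompunderK} and Lemma \ref{le.infdiv-convex}, to get tightness of the whole doubly-indexed family at once. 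Your scalar reduction bypasses exactly this step, which is the actual content of the proposition.
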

\begin{proof}
Denote by $\E(V,V)$ the space of all adapted,  c{\`a}gl{\`a}d, simple processes with values in $\L(V,V)$ which are bounded by $1$, that is each $\Theta\in \E(V,V)$ is of the form
\begin{align}\label{eq.defsimple-E}
  \Theta(t)=\Gamma_0\1_{\{0\}}(0)+ \sum_{k=1}^{N} \Gamma_j \1_{(s_j,s_{j+1}]}(t)
  \qquad\text{for all }t\in [0,T],
\end{align}
where $0=s_1< \cdots < s_{N+1}=T$ is a finite sequence of
deterministic times  and each $\Gamma_k\colon \Omega\to \L(V,V)$ is an
$\F_{s_k}$-measurable random variable with $\norm{\Gamma_{k}(\omega)}_{V\to V}\le 1$
for all $\omega\in \Omega$ and for each $k=0,\dots, N$.
The elementary integral is then defined by
\begin{align*}
\int_0^T \Theta(s)\,\, I(\Psi)(ds)
= \sum_{k=1}^N \Gamma_k \big(\I(\Psi)(s_{k+1})-I(\Psi)(s_k)\big).
\end{align*}
Theorem 2.1 in \cite{Jakubowskietal} shows that $I(\Psi)$ is a semi-martingale if and
only
\begin{align}\label{eq.Psi-bounded-prob}
\left\{ \norm{ \int_0^T \Theta(s)\, I(\Psi)(ds)}:\, \Theta\in \E(V,V)\right\}\quad
\text{ is stochastically bounded.}
\end{align}
Lemma \ref{le.approximation} guarantees that there exists a sequence $(\Psi_n)_{n\in\N}$
of simple processes in $\H_0(U,V)$ converging to $\Psi$ in probability in the Skorokhod metric.
As  Proposition \ref{pro.tilde-I} implies
that $\tilde{I}(\Psi_n)(s)$ converges to $I(\Psi)(s)$ in probability for all $s\in [0,T]$, it follows that  \eqref{eq.Psi-bounded-prob} is established by showing
\begin{align}\label{eq.Psin-bounded-prob}
\left\{ \norm{ \int_0^T \Theta(s)\,\, \tilde{I}(\Psi_n)(ds)}: \Theta\in\E(V,V),\, n\in\N\right\}\quad
\text{ is stochastically bounded.}
\end{align}
Each $\Psi_n$ is of the form
\begin{align}\label{eq.approx-Psin}
  \Psi_n(t)=\Phi_{n,0}\1_{\{0\}}(t)+\sum_{\ell=1}^{N_n} \Phi_{n,\ell} \1_{(t_{n,\ell},t_{n,\ell+1}]}(t)\qquad\text{for all }t\in [0,T],
\end{align}
for $0=t_{n,1}< \cdots < t_{n,N_n+1}=T$   and $\Phi_{n,\ell}\in L^0_P(\Omega,\F_{t_{n,\ell}};\L_2)$ for each $\ell=1,\dots, N_n$ and $n\in\N$.
For given $\Theta\in \E(V,V)$ of the form \eqref{eq.defsimple-E} and $\Psi_n\in \H_0(U,V)$
of the form \eqref{eq.approx-Psin} we can assume, by possibly enlarging the partition $(t_{n,\ell})_{\ell=1,\dots, N_n+1}$, that for every $k\in\{1,\dots, N+1\}$ there exists $\ell_{k}\in \{1,\dots, N_n+1\}$ such that $s_k=t_{n,\ell_{k}}$. It follows that
\begin{align*}
\int_0^T \Theta(s)\,\, \tilde{I}(\Psi_n)(ds)
&= \sum_{k=1}^N \Gamma_k \big( \tilde{I}(\Psi_n)(s_{k+1})-\tilde{I}(\Psi_n)(s_k)\big)\\
&= \sum_{k=1}^N \Gamma_k \left( \sum_{\ell=\ell_k}^{\ell_{k+1}-1} \Phi_{n,\ell}
 \big(L(t_{n,\ell+1}))-L(t_{n,\ell})\big)  \right)\\
&= \sum_{\ell=1}^{N_n}  \big(\widetilde{\Gamma}_{\ell}\circ \Phi_{n,\ell}\big)
  \big(L(t_{n,\ell+1}))-L(t_{n,\ell})\big),
\end{align*}
where we use the definition
\begin{align*}
\widetilde{\Gamma}_\ell:=
\Gamma_k\quad\text{for every } \ell\in \{\ell_k,\dots,\ell_{k+1}-1\}.
\end{align*}
Note, that $\widetilde{\Gamma}_\ell$ is $\F_{t_{n,\ell}}$-measurable for all $\ell=1,\dots, N_n$.
Define for each $n\in\N$ and $\ell=2,\dots, N_n+1$ the $\F_{t_{n,\ell}}$-measurable random variable
\begin{align*}
  X_{n,\ell}^{\Theta}:=\big(\widetilde{\Gamma}_{\ell-1}\circ\Phi_{n,\ell-1}\big)\big(L( t_{n,\ell})-L(t_{n,\ell-1})\big)\colon\Omega\to V.
\end{align*}
Obviously, we have
\begin{align*}
\int_0^T \Theta(s)\,\, I(\Psi_n)(ds)
=X_{n,2}^\Theta + \dots + X_{n,N_n+1}^\Theta.
\end{align*}
Choose a regular conditional distribution
\begin{align*}
  P_{n,\ell}^\Theta\colon \Borel(V)\times \Omega\to [0,1],\qquad
   P_{n,\ell}^\Theta(B,\omega)=P\big(X_{n,\ell}^\Theta\in B\,\vert \,\F_{t_{n,\ell-1}}\big)(\omega).
\end{align*}
Lemma \ref{le.conditional-exp} guarantees for every $v\in V$ that
\begin{align*}
  E\big[\exp(i \scapro{X_{n,\ell}^\Theta}{v})\,\big\vert \,\F_{t_{n,\ell-1}}\big]
=\exp\Big((t_{n,\ell}- t_{n,\ell-1})S\big((\widetilde{\Gamma}_{\ell-1}\circ \Phi_{n,\ell-1})^\ast v\big)\Big)\quad\text{$P$-a.s.,}
\end{align*}
where $S\colon U\to \C$ denotes the cylindrical L{\'e}vy symbol of $L$.
It follows for $P$-a.a. $\omega\in \Omega$
\begin{align}\label{eq.P-Theta}
  P_{n,\ell}^\Theta(\cdot,\omega)=\Big(\lambda\circ \big((\widetilde{\Gamma}_{\ell-1}(\omega)\circ \Phi_{n,\ell-1}(\omega)\big)^{-1}\Big)^{\ast( t_{n,\ell}-t_{n,\ell-1})},
\end{align}
where $\lambda$ is the cylindrical distribution of $L(1)$.
 Define for
every $k\in\{2,\dots,  N_n+1\}$ the random probability measure
\begin{align*}
P_n^\Theta(k)\colon\Borel(V)\times \Omega\to [0,1],\qquad
 P_n^\Theta(k)=P_{n,2}^\Theta\ast \dots \ast P_{n,k}^\Theta.
\end{align*}
Let $\epsilon>0$ be given. Since $\{\Psi_n:\, n\in\N\}$ is tight we can conclude
as in the proof of Proposition \ref{pro.tightness} by using
Proposition 1.6 in \cite{Jakubowski-OnSkorokhod} that there exists a compact set $K\subseteq \L_2(U,V)$ such that the sets
\begin{align*}%\label{eq.compactL-2}
 A_n:&=\{\Phi_{n,\ell}\in K \text{ for all  }\ell=1,\dots ,N_n\},
\end{align*}
satisfy $P(A_n)\ge 1-\epsilon$ for all $n\in\N$. The ideal property of
$\L_2(U,V)$ guarantees that the set
\begin{align*}
K_{\E}:=\big\{ \theta\circ \psi:\, \psi\in K, \theta\in \L(V,V) \text{ with }
  \norm{\theta}_{V\to V}\le 1\big\}.
\end{align*}
is a subset of $\L_2(U,V)$. Moreover, as $K$ is compact it follows that $K_{\E}$ is bounded and satisfies \eqref{eq.HS-compact2}, and thus the  closure $\overline{K}_{\E}$ is a compact set in $\L_2(U,V)$.
Denoting $\lambda_{\sigma}:=\lambda\circ\sigma^{-1}$ for every $\sigma\in \overline{K}_{\E}$,  Proposition \ref{pro.condcompunderK} guarantees that the set  $\{\lambda_{\sigma}:\, \sigma\in \overline{K}_{\E}\}$ of infinitely divisible probability measures $\lambda_{\sigma}$ is relatively compact.  Lemma \ref{le.infdiv-convex}  yields that the set
\begin{align*}
{\mathfrak X}&:=\Big\{\lambda_{\sigma_1}^{\ast s_{1}}\ast \dots \ast \lambda_{\sigma_n}^{\ast s_{n}}:
 s_j\ge 0,\, s_1+\dots +s_n\le T,\,
 \sigma_j\in \overline{K}_{\E},\, j=1,\dots, n,\, n\in\N\Big\}
\end{align*}
is relatively compact. Since \eqref{eq.P-Theta} implies
\begin{align*}
  &\big\{P_n^\Theta(k))(\cdot,\omega):\,  k\in\{ 2, \dots, N_n+1\}, \,\omega\in A_n,\, \Theta\in \E(V,V),\, n\in\N \big\}\subseteq {\mathfrak X},
\end{align*}
it follows that the set
\begin{align*}
  \big\{P_n^\Theta(k):\, k\in\{2,\dots, N_{n}+1\},\,\Theta\in\E(V,V),\, n\in\N \big\}
\end{align*}
of random probability measures is tight. Theorem \ref{th.tightness-tangent} implies that
\begin{align*}
  \big\{X_{n,2}^\Theta+\dots + X_{n,k}^\Theta:\,  k\in\{2,\dots, N_{n}+1\},\,\Theta\in\E(V,V),\, n\in\N\big\}
\end{align*}
is tight which establishes \eqref{eq.Psin-bounded-prob}.

\end{proof}

\begin{proof} (Theorem \ref{th.stoch-op-continuous})\\
Let  $(\Psi_n)_{n\in\N}$ be the sequence of simple processes in $\H_0(U,V)$
converging to $\Psi$ in probability in the Skorokhod metric, which exists due
to Lemma \ref{le.approximation}. In particular, $\{\Psi_n(t):\, t\in [0,T]\}$
is in the closure of $\{\Psi(t):\,t\in [0,T]\}$ and
the partition $(t_{n,j})_{j=1,\dots, N_n+1}$ obeys
\begin{align}\label{eq.mash-zero}
\lim_{n\to\infty} \sup_{j=1,\dots, N_n}\abs{t_{n,j+1}-t_{n,j}}=0.
\end{align}
Proposition \ref{pro.tilde-I} and Proposition \ref{pro.I-semi-martingale}  guarantee the existence of the adapted semi-martingale  $(I(\Psi)(t):\, t\in [0,T])$ in $V$ obeying
\begin{align*}
\sup_{t\in [0,T]}\norm{\tilde{I}(\Psi_n)(t)-I(\Psi)(t)}
\to 0 \qquad\text{in probability.}
\end{align*}
It remains to show that for each $t\in [0,T]$ the  $V$-valued random variable
\begin{align*}
\Delta_n(t):= I(\Psi_n)(t)-\tilde{I}(\Psi_n)(t)\colon
\Omega\to V,
\end{align*}
converges to $0$ in probability for $n\to\infty$.
In order to show this, fix some $t\in (0,T)$  and  denote by $k_n$ the element in $\{1,\dots, N_n\}$ such that $t\in (t_{n,k_n},t_{n,k_n+1}]$  and by $\Phi_{n,k_n}$ the $\L_2(U,V)$-valued random variable satisfying $\Psi_n(t)=\Phi_{n,k_n}$ for all $n\in\N$. Thus, we obtain $\Delta_n(t)= \Phi_{n,k_n}\big(L(t)-L(t_{n,k_n})\big)$ for all $n\in\N$.

Choose  a regular conditional distribution
\begin{align*}
P_{n}\colon \Borel(V)\times \Omega\to [0,1],\qquad
P_n(B,\omega)=P(\Delta_n\in B | \F_{t_{n,k_n}})(\omega).
\end{align*}
As in the proof of Proposition \ref{pro.tightness} it follows that
$\{P_n:\, n\in\N\}$ is tight. By taking expectation we obtain that
$\{\Delta_n(t):\, n\in\N\}$ is tight.

Furthermore, Lemma \ref{le.conditional-exp} implies for every  $\beta\in\R$ and $v\in V$ that
\begin{align}\label{eq.conditional-Delta}
E\big[\exp(i\beta \scapro{\Delta_n(t)}{v})|\F_{n,k_n}\big]
&=\exp\big((t-t_{n,k_n})S(\beta\Phi_{n,k_n}^\ast v)\big)\qquad\text{$P$-a.s.,}
\end{align}
where $S\colon U\to\C$ denotes the cylindrical L{\'e}vy symbol of $L$.
The set $\{\Phi_{n,k_n}^\ast (\omega) v:\, n\in\N\}$ is uniformly bounded
as $\Phi_{n,k_n}(\omega)$ is in the closure of $\{\Psi(t)(\omega):\, t\in [0,T]\}$ for every  $\omega\in \Omega$ and the closure of the latter is compact by  Proposition 1.1 in \cite{Jakubowski-OnSkorokhod}.
As  $S$ maps bounded sets to bounded sets by Lemma 3.2 in \cite{Riedle-Cauchy}, we   conclude from \eqref{eq.mash-zero} and \eqref{eq.conditional-Delta}  that
\begin{align}
 \lim_{n\to\infty}
E\big[\exp(i\beta \scapro{\Delta_n(t)}{v})|\F_{n,k_n}\big]=0
 \quad\text{$P$-a.s. for every $\beta\in\R$.}
\end{align}
Taking expectation yields
$\scapro{\Delta_n(t)}{v}\to 0$
in probability for $n\to\infty$ for all $v\in V$.  Together with  tightness of
$\{\Delta_n(t):\, n\in\N\}$ it follows from Lemma 2.4 in \cite{Jakubowski88}  that
$\Delta_n(t)\to 0$ for $n\to\infty$ in probability, which yields
  \begin{align}\label{eq.conv-discretisation}
\lim_{n\to\infty} P\big(\norm{I(\Psi_n)(t)-I(\Psi)(t)}\ge \epsilon\big)=0 \qquad\text{for every }\epsilon>0.
  \end{align}

It remains to show that \eqref{eq.conv-discretisation} holds true for
each sequence $(\Psi^\prime_n)_{n\in\N}$  in $\H_0(U,V)$ converging to $\Psi$  in probability in the Skorokhod metric $d_J$; that is we have to
establish for every $t\in [0,T]$ that:
  \begin{align}\label{eq.conv-ast}
\lim_{n\to\infty} P\big(\norm{I(\Psi_n^\prime)(t)-I(\Psi)(t)}\ge \epsilon\big)=0
\qquad\text{for every }\epsilon>0.
  \end{align}
For this purpose define  for each $t\in [0,T]$ and $n\in\N$ the  $V$-valued random variable
\begin{align*}
\Delta_n^\prime(t):= I(\Psi_n)(t)-I(\Psi_n^\prime)(t)\colon
\Omega\to V,
\end{align*}
where $(\Psi_n)_{n\in\N}$ denotes the sequence from above.
Because of \eqref{eq.conv-discretisation}, we can establish \eqref{eq.conv-ast} by showing
\begin{align}\label{eq.conv-Delta-ast}
\lim_{n\to\infty} P(\norm{\Delta_n^\prime(t)}\ge\epsilon)=0
\qquad\text{for all }\epsilon>0.
\end{align}
In order to establish \eqref{eq.conv-Delta-ast}, it is according to Lemma 2.4 in \cite{Jakubowski88} sufficient to show:
\begin{align}
 &\text{(i) } \left\{ I(\Psi_n)(t)-I(\Psi_n^\prime)(t):\, n\in \N\right\} \text{ is tight in $V$};\label{eq.conv-Delta-set-tight}\\
 &\text{(ii) } \text{for every  $v\in V$ we have}:\notag\\
 &\qquad\qquad \lim_{n\to\infty}\big\langle I(\Psi_n)(t)-I(\Psi_n^\prime)(t)\big\rangle \big\langle v \big\rangle= 0
   \text{ in probability}.\label{eq.conv-Delta-weak}
\end{align}
By merging the partitions where $\Psi_n$ and $\Psi_n^\prime$ are defined  we obtain for every  $n\in\N$ the representation
\begin{align*}
  \Psi_n(t)-\Psi_n^\prime(t)=\Phi_{n,0}\1_{\{0\}}(t)+\sum_{j=1}^{N_{n}} \Phi_{n,j} \1_{(t_{n,j},t_{n,j+1}]}(t)\quad\text{for all }t\in [0,T],
\end{align*}
where $0=t_{n,1}<  \cdots < t_{n,N_{n}+1}=T$ is a finite sequence of
deterministic times  and $\Phi_{n,j}\colon\Omega\to\L_2(U,V)$  is an
$\F_{t_{n,j}}$-measurable  random variable   for each $j=0,\dots, N_{n}$.
For a fixed $t\in (0,T]$ we can assume that  for every $n\in\N$ there exists $k_n\in\{2,\dots, N_n+1\}$ such that $t=t_{n,k_n}$. Now we can prove
\eqref{eq.conv-Delta-set-tight} and \eqref{eq.conv-Delta-weak}
as \eqref{eq.set-tight} and  \eqref{eq.conv-weak} in the proof of Proposition \ref{pro.tilde-I}.
\end{proof}

%\input{appendix}

%\bibliographystyle{plain}
%\bibliography{litint}

\end{document}